\definecolor{hanblue}{rgb}{0.27, 0.42, 0.81}
\definecolor{red}{rgb}{1.0, 0.0, 0.0}
\numberwithin{equation}{section}
\newtheorem{thm}{Theorem}[section]
\theoremstyle{plain}
\theoremstyle{remark}
\newtheorem{rem}[thm]{Remark}
\theoremstyle{plain}
\newtheorem{lem}[thm]{Lemma}
\theoremstyle{plain}
\newtheorem{prop}[thm]{Proposition}
\theoremstyle{plain}
\theoremstyle{definition}
\newtheorem{definition}[thm]{Definition}
\newtheorem{assumption}[thm]{Assumption}
\theoremstyle{plain}
	\newcommand{\bes}{\begin{equation*}}
	\newcommand{\ees}{\end{equation*}}
	\newcommand{\be}{\begin{equation}}
	\newcommand{\ee}{\end{equation}}
	\newcommand{\bi}{\begin{itemize}}
	\newcommand{\ei}{\end{itemize}}
		\def\bas#1\eas{\begin{align*}#1\end{align*}}
		\def\ba#1\ea{\begin{align}#1\end{align}}
	\newcommand{\baed}{\begin{aligned}}
	\newcommand{\eaed}{\end{aligned}}
	\newcommand{\R}{\mathbb{R}}
	\newcommand{\N}{\mathbb{N}}
	\newcommand{\C}{\mathbb{C}}
		\newcommand{\Om}{\Omega}
	\newcommand{\M}{\mathcal{M}}
	\newcommand{\calM}{\mathcal{M}}
	\newcommand{\calD}{\mathcal{D}}
	\newcommand{\F}{\mathcal{F}}
	\DeclareMathOperator*{\argmin}{arg\,min}
	\DeclareMathOperator*{\argmax}{arg\,max}
  \let\div\relax
  \DeclareMathOperator*{\div}{div}
	\newcommand{\norm}[1]{\left\lVert#1\right\rVert}
	\newcommand{\ps}[2]{ \langle #1, #2  \rangle }
	\newcommand{\p}{\partial}
	\newcommand{\de}{\partial}
	\newcommand{\olom}{\overline{\Om}}
        \newcommand{\WT}{X}
\newcommand{\e}{\varepsilon}
\newcommand{\f}{\varphi}
\newcommand{\Ltwo}{L^2_H}
\DeclareMathOperator{\supp}{supp} %
\newcommand{\curves}{C_{\rm w}}
\newcommand{\pcurves}{C_{\rm w}^+}
\newcommand{\weak}{\rightharpoonup}   %
\newcommand{\weakstar}{\stackrel{*}{\rightharpoonup}}  %
\DeclareMathOperator{\ext}{Ext} %
\DeclareMathOperator{\gr}{graph}
\newcommand{\AC}{{\rm AC}}
\newcommand{\ACe}{{\rm {AC}}_{\infty}}
\DeclareRobustCommand{\rchi}{{\mathpalette\irchi\relax}}
\newcommand{\irchi}[2]{\raisebox{\depth}{$#1\chi$}}   
\newcommand{\points}{\mathcal{C}_{\alpha,\beta}}
\title[A generalized conditional gradient method for dynamic inverse problems]{A generalized conditional gradient method for dynamic inverse problems with optimal transport regularization}
\author[K. Bredies]{Kristian Bredies}
\author[M. Carioni]{Marcello Carioni} 
\author[S. Fanzon]{Silvio Fanzon}
\author[F. Romero]{Francisco Romero}
 \address[Kristian Bredies, Silvio Fanzon, Francisco Romero]{University of Graz, Institute of Mathematics and Scientific Computing, Heinrichstra\ss e 36, 8010 Graz, Austria}
 \address[Marcello Carioni]{University of Cambridge, Department of Applied Mathematics and Theoretical Physics, Wilberforce Road, Cambridge
CB3 0WA, UK}
\email[Kristian Bredies]{Kristian.Bredies@uni-graz.at}
\email[Marcello Carioni]{mc2250@maths.cam.ac.uk}
\email[Silvio Fanzon]{Silvio.Fanzon@uni-graz.at}
\email[Francisco Romero]{Francisco.Romero-Hinrichsen@uni-graz.at}
\begin{document}

\begin{abstract}
\small{We develop a dynamic generalized conditional gradient method (DGCG) for dynamic inverse problems with optimal transport regularization. We consider the framework introduced in (Bredies and Fanzon, ESAIM: M2AN, 54:2351--2382, 2020), %
where the objective functional is comprised of a fidelity term, penalizing the pointwise in time discrepancy between the observation and the unknown in time-varying Hilbert spaces, and a regularizer keeping track of the dynamics, given by the Benamou-Brenier energy constrained via the homogeneous continuity equation. Employing the characterization of the extremal points of the Benamou-Brenier energy (Bredies et al., Bull. Lond. Math. Soc., 53(5):1436--1452, 2021) we define the \emph{atoms} of the problem as measures concentrated on absolutely continuous curves in the domain. %
We propose a dynamic generalization of a conditional gradient method that consists of iteratively adding suitably chosen \emph{atoms} to the current sparse iterate, and subsequently optimizing the coefficients in the resulting linear combination. We prove that the method converges with a sublinear rate to a minimizer of the objective functional. 
Additionally, we propose heuristic strategies and acceleration steps that allow to implement the algorithm efficiently. Finally, we provide numerical examples that demonstrate the effectiveness of our algorithm and model in reconstructing heavily undersampled dynamic data, together with the presence of noise.

  \vskip .3truecm \noindent Key words: Conditional gradient method, dynamic inverse problems,  
  Benamou-Brenier energy, optimal transport regularization, continuity equation.

  \vskip.1truecm \noindent 2010 Mathematics Subject Classification:
  65K10, %
  65J20, %
  90C49, %
  28A33, %
  35F05. %
}
\end{abstract}

\maketitle

\section{Introduction}

The aim of this paper is to develop a dynamic generalized condition gradient method (DGCG) to numerically compute solutions of ill-posed dynamic inverse problems regularized with optimal transport energies. The code is openly available on GitHub\footnote{\url{https://github.com/panchoop/DGCG_algorithm/}}. 

Lately, several approaches have been proposed to tackle dynamic inverse problems \cite{hk, sl, Schmitt_2002_2, shb,  ws}, all of which take advantage of redundancies in the data,  allowing to stabilize reconstructions, both in the presence of noise or undersampling.
 A common challenge faced in such time-dependent approaches is understanding how to properly connect, or relate, the time-neighbouring datapoints, 
in a way that the reconstructed object follows a presumed dynamic. In this paper we address such issue by means of dynamic optimal transport. 
A wide range of applications can benefit from motion-aware approaches.
In particular, the employment of dynamic reconstruction methods represents
one of the latest key mathematical advances in medical imaging. %
For instance,
magnetic resonance imaging (MRI) \cite{ lhdj, ocs,shsbs} and computed tomography (CT) \cite{  Bonnet2003,Burger_2017,Ding_2017} methods   allow dynamic modalities in which the time-dependent data
is further undersampled to reach high temporal sampling rates;
these are required to resolve organ motion, such as the beating heart or the breathing lung. A more accurate reconstruction of the image, and of the underlying dynamics, would yield valuable diagnostic information.

\subsection{Setting and existing approaches}

Recently, it has been proposed to regularize dynamic inverse problems using dynamic optimal transport energies both
in a balanced and unbalanced context \cite{bf, schmitzerwirth2020,schmitzerwirth2019}, with the goal of efficiently reconstructing time-dependent Radon measures. Such regularization choice is natural: optimal transport energies incorporate information about time correlations present in the data, and are thus favoring a more stable reconstruction. %
Optimal transport theory was originally developed to find the most efficient way to
move mass from a probability measure $\rho_0$ to another one $\rho_1$, with respect to a given cost \cite{kantorovich, sant}. More recently Benamou
and Brenier \cite{bb} showed that the optimal transport map can be computed by solving
\begin{equation} \label{intro:bb}
\min_{(\rho_t,v_t)} \frac{1}{2}\int_0^1\int_{\olom} |v_t(x)|^2 \, d\rho_t(x)\,dt \,\,\,\, \text{ s.t. } \,\,\,\, \partial_t \rho_t + \div(v_t \rho_t ) = 0\,,
\end{equation}
where $t \mapsto \rho_t$ is a curve of
probability measures on the closure of a bounded domain $\Om  \subset \R^d$, $v_t$ is a time-dependent vector field advecting the mass, and the continuity equation is intended in the sense of distributions 
with initial data $\rho_0$ and final data $\rho_1$.
Notably, the quantity at \eqref{intro:bb}, named Benamou-Brenier energy,
admits an equivalent convex reformulation. Specifically, consider the space of bounded Borel measures $\M := \mathcal{M}(X)  \times
\mathcal{M}(X; \R^d)$, $X:=(0,1)\times \olom$, and define the convex energy $B: \M  \rightarrow
[0,+\infty]$ by setting
\begin{equation*}
B(\rho,m) := \frac{1}{2} \int_0^1 \int_{\olom} \left|\frac{dm}{d\rho}\,(t,x)\right|^2\, d\rho(t,x)\,,
\end{equation*}
if $\rho \geq 0$ and  $m\ll \rho$, and $B:=+\infty$ otherwise. Then \eqref{intro:bb} is equivalent to minimizing $B$ under the linear constraint $\partial_t \rho + \div m = 0$. Such convex reformulation %
can be employed as a regularizer for dynamic inverse problems where, instead of fixing initial and
final data, a fidelity term is added to measure the discrepancy between the
unknown and the observation at each time instant, as proposed in \cite{bf}. There the authors consider the dynamic inverse problem of finding a curve of measures $t \mapsto \rho_t$, with $\rho_t \in \M(\olom)$, such that 
\begin{equation} \label{intro:dip}
K_t^*\rho_t = f_t \,\,\, \text{ for a.e. }\,\, t \in (0,1)\,,
\end{equation}
where $f_t \in H_t$ is some given data, $\{H_t\}$ is a family of Hilbert spaces and $K_t^* : \M(\olom) \rightarrow H_t$ are linear continuous observation operators. The problem at \eqref{intro:dip} is then regularized via the minimization problem
\begin{equation}\label{intro:inverseproblem}
\min_{(\rho,m) \in \M} \frac{1}{2}\int_0^1 \norm{K^*_t \rho_t - f_t}^2_{H_t}dt  + \beta B(\rho,m) +
      \alpha \norm{\rho}_{\mathcal{M}(X)} \,\,\,\, \text{ s.t. } \,\,\,\, \partial_t \rho + \div m = 0\,,
\end{equation}
where $\|\rho\|_{\mathcal{M}(X)}$ denotes the total variation norm of the measure $\rho:=dt \otimes \rho_t$, and $\alpha, \beta>0$ are regularization parameters. Notice that any curve $t \mapsto \rho_t$ having finite Benamou-Brenier energy and satisfying the continuity equation constraint must have constant mass %
(see Lemma \ref{lem:prop cont}). As a consequence, the regularization  \eqref{intro:inverseproblem} is especially suited to reconstruct motions where preservation of mass is expected.
We point our that such formulation is remarkably flexible, as the measurements spaces and measurement operators are allowed to be very general. In this way one could model, for example, undersampled acquisition strategies in medical imaging, particularly MRI \cite{bf}.

The aim of this paper is to design a numerical algorithm to solve \eqref{intro:inverseproblem}.
The main difficulties arise due to the non-reflexivity of measure spaces. Even in the static case, solving the classical LASSO problem \cite{lasso} in the space of bounded Borel measures (known as BLASSO \cite{blasso}), i.e.,
 \begin{equation}\label{intro:BLASSO}
 \inf_{\rho \in \mathcal{M}(\Om)}\frac{1}{2} \|K\rho - f\|^2_Y + \alpha \|\rho\|_{\M(\Omega)}\,,
 \end{equation}
 for a Hilbert space $Y$ and a linear continuous operator $K : \mathcal{M}(\Om) \rightarrow Y$,
 has proven to be challenging. Usual strategies to tackle \eqref{intro:BLASSO} numerically often rely on the discretization of the domain \cite{combetteswajs, ddd, tsengblock}; however grid-based methods are known to be affected by theoretical and practical flaws such as high computational costs and the presence of mesh-dependent artifacts in the reconstruction.
 The mentioned drawbacks have motivated algorithms that
 do not rely on domain discretization, but optimize directly on the space of
 measures. One class of such algorithms, first introduced in \cite{K-Pikkarainen}  and subsequently developed
 in different directions \cite{boydgcg, denoyelle2019sliding,  flinth2020, pieper2020},
 are named generalized conditional gradient methods (GCG) or Frank-Wolfe type algorithms. They can be regarded as the infinite dimensional
 generalization of the classical Frank-Wolfe optimization algorithm
 \cite{frankwolfe} and of GCG in Banach spaces \cite{bachgcg,bredieslorenz2008,Bredies2009, clarkson2010,   dunn1979,jaggi2013}. The basic idea behind such algorithms consists in 
 exploiting the structure of sparse solutions to  \eqref{intro:BLASSO}, which are given 
 by finite linear combinations of Dirac deltas supported on $\Om$.
In this case Dirac deltas represent the extremal points of the unit ball of the Radon norm regularizer.
With this
knowledge at hand, the GCG method iteratively
minimizes a linearized version of  \eqref{intro:BLASSO}; such minimum can be found in the set of extremal points. The iterate is then constructed by adding delta peaks at each iteration, and by subsequently optimizing the coefficients of the linear
combination. GCG methods have proven to be
successful at solving \eqref{intro:BLASSO}, and have been adapted to related
problems in the context of, e.g., super-resolution \cite{alberti2019dynamic, neitzelgcg, pieper2020,
schmitzerwirth2019}.

\subsection{Outline of the main contributions}

Inspired by GCG methods, the goal of this paper is to develop a dynamic
generalized conditional gradient method (DGCG) aimed at solving the dynamic
minimization problem at \eqref{intro:inverseproblem}. Similarly to the
classical GCG approaches, our DGCG algorithm is based on the structure of sparse solutions to \eqref{intro:inverseproblem}, and it is Lagrangian in essence, since it does not require a discretization of the space domain.  Lagrangian approaches have been proven useful for many different dynamic applications \cite{saumier2015optimal}, often outperforming Eulerian approaches, where the discretization in space is
        necessary. Indeed, since Eulerian approaches are based on the
        optimization of challenging discrete assignment problems in space,
        Lagrangian approaches allow to lower the computational costs and are
        more suitable to reconstruct coalescence phenomena in the dynamics.  Motivated by similar considerations,
our approach aims to reduce the reconstruction artifacts and lower the
computational cost when compared to grid-based methods designed to solve similar inverse problems to \eqref{intro:inverseproblem}  (see \cite{schmitzerwirth2020}).

The fundamentals of our approach rest on recent results concerning sparsity for variational inverse problems:
it has been empirically observed that the presence of a regularizer promotes the existence of sparse solutions, that is, minimizers that can be represented as a finite linear combination of simpler atoms. This effect is evident in reconstruction problems \cite{duval2019epigraphical, flinth2018, unser2,  Unser2019NativeBS,unsersplines}, as well as in variational problems in other applications, such as materials science \cite{FPP2,Fanzon:2020aa,LL,ShRe}. Existence of sparse solutions has been recently proven for a class of general functionals comprised of a fidelity term, mapping to a finite dimensional space, and a regularizer: in this case atoms correspond to the extremal points of the unit ball of the regularizer \cite{chambolle,bredies2020sparsity}. %
In the context of \eqref{intro:inverseproblem}, the extremal points of the Benamou-Brenier energy have been recently characterized in \cite{bcfr}; this provides an operative notion of atoms that  will be used throughout the paper. More precisely, for every absolutely continuous curve $\gamma: [0,1] \rightarrow \olom$, we name as \textit{atom} of the Benamou-Brenier energy the respective pair of measures $\mu_\gamma := (\rho_\gamma, m_\gamma) \in \M$ defined by 
    \begin{equation}\label{intro:atom}
     \rho_\gamma:=a_\gamma \, dt \otimes \delta_{\gamma(t)} \,, \,\,\,\, m_\gamma:=\dot \gamma(t) \rho_\gamma \,,\,\,\,\,
a_\gamma:=\left( \frac{\beta}{2} \int_0^1 |\dot \gamma (t)|^2 \, dt + \alpha \right)^{-1}\,.
    \end{equation}
The notion of atom described above can be regarded as the dynamic counterpart of the Dirac deltas for the Radon norm regularizer. Curves of measures of the form \eqref{intro:atom} constitute the building blocks used in our DGCG method to generate at iteration step $n$ the sparse iterate $\mu^n = (\rho^n , m^n)$
\begin{equation} \label{intro:iterates}
\mu^n = \sum_{j=1}^{N_n} c_j \mu_{\gamma_{j}}
\end{equation}
converging to a solution of \eqref{intro:inverseproblem}, where $c_j >0$. 

 The basic DGCG method proposed is comprised of two steps. The first one, called \textit{insertion step}, operates as follows. Given a sparse iterate $\mu^n$ of the form \eqref{intro:iterates}, we obtain a descent direction for the energy at \eqref{intro:inverseproblem}, by minimizing a version of \eqref{intro:inverseproblem} around $\mu^n$, in which the fidelity term is linearized. 
We show that, in order to find such descent direction, it is sufficient to solve
\begin{equation} \label{intro:extinsertion}
  \min_{(\rho,m) \in \ext(C_{\alpha,\beta})}  - \int_0^1 \langle \rho_t, w_t^n\rangle_{H_t}\, dt\,,
\end{equation}
where $w^n_t  := -K_t(K_t^* \rho^n_t -  f_t ) \in C(\olom)$ is the dual variable of the problem at the iterate $\mu^n$, and $\ext(C_{\alpha,\beta})$ denotes the set of extremal points of the unit ball of the regularizer in \eqref{intro:inverseproblem}, namely the set $C_{\alpha,\beta} := \{(\rho,m) \in \M : \partial_t \rho + \div m = 0,\ \  \beta B(\rho,m) +
      \alpha \norm{\rho}_{\mathcal{M}(X)}\leq 1\}$. Formula \eqref{intro:extinsertion} clarifies the connection between atoms and extremal points of $C_{\alpha,\beta}$, showing the fundamental role that the latter play in sparse optimization and GCG methods. In view of the characterization Theorem~\ref{thm:extremal}, proven in \cite{bcfr},  the minimization problem \eqref{intro:extinsertion} can be equivalently written in terms of atoms 
\begin{equation}
  \label{intro:hard_min}
\min_{\gamma \in \AC^2} \, \left\{ 0, \, 
 -a_{\gamma} \int_0^1 w_t^n(\gamma(t))\,dt \right\}\,,
\end{equation}
where $\AC^2$ denotes the set of absolutely continuous curves with values in $\olom$ and weak derivative in $L^2(\olom;\R^d)$. The insertion step then consists in finding a curve  $\gamma^*$ solving \eqref{intro:hard_min}, and considering the respective atom $\mu_{\gamma^*}$. Afterwards, naming $\gamma_{N_n+1}:=\gamma^*$, the \textit{coefficients optimization step} proceeds at optimizing the conic combination 
  $\mu^n + c_{N_{n}+1} \mu_{\gamma_{N_n+1}}$ with respect to \eqref{intro:inverseproblem}, among all non-negative coefficients $c_j$. Denoting by $c_1^*,\ldots, c_{N_n+1}^*$ a solution to such problem, the new iterate is  defined by $\mu^{n+1}:=\sum_j c_j^*\mu_{\gamma_j}$. %
The two steps of inserting a new atom in the linear combination and optimizing the coefficients are the building blocks of our \textit{core algorithm}, summarized in Algorithm~\ref{alg:core}. In Theorem \ref{thm:convergence} we prove that such algorithm has a sublinear convergence rate, similarly to the GCG method for the BLASSO problem \cite{K-Pikkarainen}, and the produced iterates $\mu^n$ converge in the weak* sense of measures to a solution of \eqref{intro:inverseproblem}. The core algorithm and its analysis are the subject of Section~\ref{sec:algorithm}.

From the computational point of view,
we observe that the coefficients optimization step can be solved efficiently, as it is equivalent to a finite dimensional quadratic program. Concerning the insertion step, however, even if the complexity of searching for a descent direction for \eqref{intro:inverseproblem} is reduced by only minimizing in the set of atoms, \eqref{intro:hard_min} remains a challenging non-linear and non-local problem. For this reason, we shift our attention to computing stationary points for \eqref{intro:hard_min}, relying on gradient descent strategies. Specifically, we prove that, under additional assumptions on $H_t$ and $K_t^*$, problem \eqref{intro:hard_min} can be cast in the Hilbert space $H^1([0,1];\R^d)$, and that the gradient descent algorithm, with appropriate stepsize, outputs stationary points to  \eqref{intro:hard_min} (see Theorem~\ref{thm:gradient_descent}). With this theoretical result at hand, in Section \ref{sec:insstepheu} we formulate a solution strategy for \eqref{intro:hard_min} based on multistart gradient descent methods, whose initializations are chosen according to heuristic principles. More precisely, the initial curves are chosen randomly in the regions where the dual variable $w_t^n$ has larger value, and new starting curves are produced combining pieces of stationary curves for \eqref{intro:hard_min} by means of a procedure named \emph{crossover}. %

We complement the core algorithm with acceleration strategies. First, we add multiple atoms in the insertion step (\emph{multiple insertion step}). Such new atoms can be easily obtained as a byproduct of the multistart gradient descent in the insertion step. Moreover, after optimizing the coefficients in the linear combination, we perform an additional gradient descent step with respect to \eqref{intro:inverseproblem}, varying the curves in the iterate $\mu^{n+1}$, while keeping the weights fixed.   
Such procedure, named \emph{sliding step}, will then be alternated with the coefficients optimization step for a fixed number of iterations, before searching for a new atom in the insertion step. These additional steps are described in Section~\ref{sec:insstepheu}. We mention that similar strategies were already employed for the BLASSO problem \cite{K-Pikkarainen, pieper}. They are then included in the basic core algorithm to obtain the complete DGCG method in Algorithm \ref{alg:full}.

In Section \ref{sec:numerics} we provide numerical examples. As observation operators $K^*_t$, we use time-dependent undersampled Fourier measurements, popular in imaging and medical imaging \cite{brediesbook, epsteinintro}, 
as well as in compressed sensing and super-resolution \cite{ 
alberti2019dynamic, crt,candes2014towards}. Such examples show the effectiveness of our DGCG method in reconstructing spatially sparse data, in presence of simultaneously strong noise and severe temporal undersampling. 
Indeed, satisfactory results are obtained for ground-truths with $20\%$ and $60\%$ of added Gaussian noise, and heavy 
temporal undersampling in the sense that, at each time $t$, the observation operator $K^*_t$  is not able to distinguish sources along lines.    
With such ill-posed measurements static reconstruction methods would not be able to accurately recover any ground-truth. In contrast, the time regularization chosen in \eqref{intro:inverseproblem} allows to resolve the dynamics by correlating the information of neighbouring data-points. 
As shown by the experiments presented, our DGCG algorithm produces accurate reconstructions of the ground-truth. %
In case of $20\%$ and $60\%$ of added noise we note a surge of low intensity artifacts;  nonetheless, the obtained reconstruction is close, in the sense of the measures, to the original ground-truth. 
Moreover, in all of the tried out examples a linear convergence rate has been observed; this shows that the algorithm is, in practice, faster than the theoretical guarantees (Theorem \ref{thm:convergence}), and a linear rate has to be expected in most of the cases.   

We conclude the paper with Section \ref{sec:perspectives}, in which we discuss future perspectives and open questions, such as the possibility of improving the theoretical convergence rate for Algorithm \ref{alg:core}, and alternative modelling choices. 
\subsection{Organization of the paper}
  The paper is organized as follows. In Section \ref{sec:preliminaries} we summarize all the relevant notations and preliminary results regarding the Benamou-Brenier energy that are needed in the paper. In particular, we recall the characterization of the extremal points of the unit ball of the Benamou-Brenier energy obtained in \cite{bcfr}. 
In Section \ref{sec:OT_regularization} we introduce the dynamic inverse problem under consideration and its regularization \eqref{intro:inverseproblem}, following the approach of \cite{bf}. We further establish basic theory needed to setup the DGCG method. In Section~\ref{sec:algorithm} we provide the definition of atoms and we give a high-level description of the DGCG method we propose in this paper, see Algorithm \ref{alg:core}, proving its sublinear convergence.  
In Section~\ref{sec:numerical_implementation} we describe the strategy employed to solve the insertion step problem \eqref{intro:hard_min}, based on a multistart gradient descent method. Moreover we outline the mentioned acceleration steps. Incorporating these procedures in the core algorithm, we obtain the complete DGCG method, see Algorithm \ref{alg:full}.
In Section \ref{sec:numerics} we show numerical results supporting the effectiveness of Algorithm \ref{alg:full}. Finally, we present the reader some open questions in Section \ref{sec:perspectives}.

\section{Preliminaries and notation}\label{sec:preliminaries}
In this section we introduce the mathematical concepts and results we need to formulate our minimization problem and consequent  algorithms. Throughout the paper $\Omega \subset \R^d$ denotes an open bounded domain with $d\in \N, d \geq 1$. We define the time-space cylinder $\WT := (0,1) \times \overline \Omega$. Following \cite{afp}, given a metric space $Y$ we denote by $\M(Y)$, $\M(Y ; \R^d)$,  $\M^+(Y)$, the spaces of bounded Borel measures, bounded vector Borel measures, and positive measures, respectively. For a scalar measure $\rho$ we denote by $\norm{\rho}_{\M(Y)}$ its total variation. In addition, we employ the notations $\R_+$ and $\R_{++}$ to refer to the non-negative and positive real numbers, respectively.

\subsection{Time dependent measures}
We say that $\{\rho_t\}_{t \in [0,1]}$ is a Borel family of measures in $\M(\olom)$ if $\rho_t \in \M(\olom)$ for every $t \in [0,1]$ and the map
$t \mapsto \int_{\olom} \varphi(x)\, d\rho_t(x)$
is Borel measurable for every function $\varphi \in  C(\olom)$.
Given a measure $\rho \in \M(X)$ we say that $\rho$ disintegrates with respect to time if there exists a Borel family of measures $\{\rho_t\}_{t \in [0,1]}$ in %
$\M(\olom)$ such that
	\[
	\int_X \f(t,x) \, d\rho(t,x) = \int_0^1 \int_{\olom} \f (t,x) \, d\rho_t(x) \, dt \quad \text{ for all } \quad  \f \in L^1_{\rho}(X) \,.
	\]
We denote such disintegration with the symbol $\rho =dt \otimes  \rho_t$. Further, we say that a curve of measures $t \in [0,1] \mapsto \rho_t \in \M(\olom)$ is narrowly continuous if, for all  $\f \in C(\olom)$, the map $t \mapsto \int_{\olom} \f(x) \, d\rho_t(x)$
is continuous. The family of narrowly continuous curves will be denoted by $\curves$. We denote by $\pcurves$ the family of narrowly continuous curves with values in $\M^+(\olom)$. 

\subsection{Optimal transport regularizer} 
Introduce the space
\begin{equation*}
  \M := \calM(\WT) \times \calM( \WT ; \R^d )\,.
\end{equation*}
We denote elements of $\M$ by $\mu=(\rho,m)$ with $\rho \in \M(X)$, $m \in \M(X;\R^d)$, and by $0$ the pair $(0,0)$.
Define the set of pairs in $\calM$ satisfying the continuity equation as 
\begin{equation*}
  \calD := \{\mu \in \calM \ : \ \p_t \rho + \div m = 0 \ \ \text{in} \ \ 
  \WT \}\,,
\end{equation*}
where the solutions of the continuity equation are intended in a distributional 
sense, that is,
\begin{equation} \label{cont weak}
\int_{X}  \de_t \f \, d\rho + 
\int_{X}  \nabla \f \cdot  dm   = 0   \quad \text{for all} \quad \f \in C^{\infty}_c ( X ) \,.
\end{equation}

The above weak formulation includes no flux boundary conditions for the momentum $m$ on $\de \Om$, and no initial and final data for $\rho$.  Notice that, by standard approximation arguments, it is equivalent to test \eqref{cont weak} against maps in $C^1_c ( X )$ (see \cite[Remark 8.1.1]{ags}).

We now introduce the Benamou-Brenier energy, as originally done in \cite{bb}. To this end, define the convex, one-homogeneous and lower semicontinuous map $\Psi \colon \R \times \R^d \to [0,\infty]$ as
\begin{equation} \label{intro Psi}
\Psi(t,x):=
\begin{cases}
 \frac{|x|^2}{2t} & \text{ if } t>0\,, \\
0    & \text{ if } t=|x|=0 \,, \\
+\infty & \text{ otherwise}\,. 
\end{cases}
\end{equation}
The Benamou-Brenier energy $B \colon \M \to [0,\infty]$ is defined by  
\begin{equation} \label{intro convex}
B (\mu) := \int_X \Psi \left( \frac{d\rho}{d\lambda}, \frac{dm}{d\lambda}\right) \, d \lambda \,,
\end{equation}
where $\lambda \in \M^+(X)$ is any measure satisfying $\rho,m \ll \lambda$. Note that \eqref{intro convex} does not depend on the choice of $\lambda$, as $\Psi$ is one-homogeneous. Following \cite{bf}, we introduce a coercive version of $B$: for fixed parameters $\alpha,\beta >0$ define the functional $J_{\alpha, \beta} \colon \M \to [0,\infty]$ as
\begin{equation} \label{prel:reg}
  J_{\alpha, \beta}(\mu) := \begin{cases} 
      \beta B(\mu) +
      \alpha \norm{\rho}_{\M(X)} & \,\, \text{ if }(\rho,m) \in \calD\,, \\ 
      + \infty\qquad  & \,\,   \text{ otherwise}\,.
    \end{cases} 
\end{equation}
As recently shown \cite{bf}, $J_{\alpha,\beta}$ can be employed as a regularizer for dynamic inverse problems in spaces of measures.

\subsection{\texorpdfstring{Extremal points of $J_{\alpha,\beta}$}{Extremal points of J\_\{alpha,beta\}} \label{sec:extremal}}

Define the convex unit ball
\[
C_{\alpha,\beta} := \left\{\mu \in \M \, \colon \, J_{\alpha,\beta}(\mu)  \leq 1 \right\} \,,
\]
and the set of measures concentrated on $\AC^2$ curves in $\olom$%
\begin{equation} \label{ext_points}
\points := \left\{ \mu_\gamma   \in \M \, \colon \, \gamma \in \AC^2([0,1];\R^d),\,\, 
\gamma([0,1]) \subset \olom \right\}  \,,
\end{equation}
where we denote by $\mu_\gamma$ the pair $(\rho_\gamma,m_\gamma)$ with
\begin{equation} \label{ext_meas}
\rho_\gamma:=a_\gamma \, dt \otimes \delta_{\gamma(t)} \,, \,\,\,\, m_\gamma:=\dot \gamma(t) \rho_\gamma \,,\,\,\,\,
a_\gamma:=\left( \frac{\beta}{2} \int_0^1 |\dot \gamma (t)|^2 \, dt + \alpha \right)^{-1}\,.
\end{equation}
Here $\AC^2([0,1];\R^d)$ denotes the space of curves having metric derivative in $L^2((0,1);\R^d)$. We can identify $\AC^2([0,1];\R^d)$ with the Sobolev space $H^1((0,1);\R^d)$ (see \cite[Remark 1.1.3]{ags}). For brevity, we will denote by $\AC^2:=\AC^2([0,1];\olom)$ the set of curves $\gamma$ belonging to $\AC^2([0,1];\R^d)$ such that $\gamma([0,1]) \subset \olom$.   %
For the extremal points of $C_{\alpha,\beta}$ we have the following characterization result, originally proven in \cite[Theorem 6]{bcfr}. 
\begin{thm} \label{thm:extremal}
Let $\alpha,\beta>0$ be fixed. Then it holds $\,\ext(C_{\alpha,\beta})=\{0\} \cup \points$.
\end{thm}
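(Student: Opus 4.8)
The plan is to prove the two inclusions separately, relying on the convexity and positive one-homogeneity of $J_{\alpha,\beta}$ together with the superposition principle for solutions of the continuity equation. As a preliminary normalization I would first record that every $\mu_\gamma \in \points$ satisfies $J_{\alpha,\beta}(\mu_\gamma)=1$: indeed $\norm{\rho_\gamma}_{\M(X)}=a_\gamma$ and $B(\mu_\gamma)=\frac{a_\gamma}{2}\int_0^1|\dot\gamma|^2\,dt$, so the definition of $a_\gamma$ in \eqref{ext_meas} gives $J_{\alpha,\beta}(\mu_\gamma)=a_\gamma(\frac{\beta}{2}\int_0^1|\dot\gamma|^2\,dt+\alpha)=1$, and a direct test against $\f\in C^1_c(X)$ shows $\mu_\gamma \in \calD$. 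Since $J_{\alpha,\beta}$ is convex, lower semicontinuous and positively one-homogeneous, its unit ball $C_{\alpha,\beta}$ is weak* compact and every extremal point other than $0$ must lie on the level set $\{J_{\alpha,\beta}=1\}$. Finally, $0$ is extremal because the constraint $\rho\ge 0$ (forced by $B(\mu)<\infty$) rules out any antipodal splitting $0=\frac12(\mu+(-\mu))$ with $\mu,-\mu\in C_{\alpha,\beta}$.

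For the inclusion $\ext(C_{\alpha,\beta})\subseteq\{0\}\cup\points$, I would take an extremal $\mu=(\rho,m)\neq 0$, so that $\mu\in\calD$, $B(\mu)<\infty$ and $J_{\alpha,\beta}(\mu)=1$. Finite energy gives $\rho\ge0$, $m=v\rho$ with $v\in L^2(\rho)$, and constant mass in time. After normalizing the (constant) mass, I would apply the superposition principle \cite{ags} to $(\rho_t,v_t)$: this yields a measure $\eta$ on $\AC^2$ with $\rho=\int (dt\otimes\delta_{\gamma(t)})\,d\eta$, $m=\int \dot\gamma\,(dt\otimes\delta_{\gamma(t)})\,d\eta$ and the exact energy identity $B(\mu)=\frac12\int\int_0^1|\dot\gamma|^2\,dt\,d\eta$. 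Rewriting $dt\otimes\delta_{\gamma(t)}=a_\gamma^{-1}\rho_\gamma$ and setting $d\nu:=a_\gamma^{-1}\,d\eta$, these identities combine into $\mu=\int\mu_\gamma\,d\nu(\gamma)$ with $\nu(\AC^2)=J_{\alpha,\beta}(\mu)=1$, so $\mu$ is the barycenter of a probability measure $\nu$ supported in $C_{\alpha,\beta}$. Extremality then forces $\nu$ to be a Dirac mass: otherwise a measurable splitting of $\AC^2$ into two sets of positive $\nu$-measure would express $\mu$ as a nontrivial convex combination of two distinct points of $C_{\alpha,\beta}$ (distinct because $\gamma\mapsto\mu_\gamma$ is injective), contradicting extremality. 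Hence $\mu=\mu_{\gamma_0}\in\points$.

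For the reverse inclusion I would show directly that each $\mu_\gamma$ is extremal. Suppose $\mu_\gamma=\frac12(\mu^1+\mu^2)$ with $\mu^i=(\rho^i,m^i)\in C_{\alpha,\beta}$. Convexity of $J_{\alpha,\beta}$ and $J_{\alpha,\beta}(\mu_\gamma)=1$ force $J_{\alpha,\beta}(\mu^1)=J_{\alpha,\beta}(\mu^2)=1$ and, separately, midpoint equality in both the $B$ and the $\norm{\cdot}_{\M(X)}$ terms. Since $2\rho_\gamma$ is concentrated on the graph $\{(t,\gamma(t))\}$ and $\rho^1,\rho^2\ge0$, both $\rho^i$ are concentrated there, so $\rho^i=g_i\,dt\otimes\delta_{\gamma(t)}$ and $m^i=g_iv_i\,dt\otimes\delta_{\gamma(t)}$ with $g_i\ge0$. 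The equality case in the midpoint convexity of the integrand $\Psi$, which is strictly convex transversally to its rays, forces the velocities to coincide, $v_1=v_2=\dot\gamma$; testing the continuity equation against functions depending only on time then gives $g_1,g_2$ constant, and the normalization $J_{\alpha,\beta}(\mu^i)=1$ pins $g_1=g_2=a_\gamma$. Therefore $\mu^1=\mu^2=\mu_\gamma$, proving $\points\subseteq\ext(C_{\alpha,\beta})$.

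The main obstacle I expect is the first inclusion, specifically the faithful use of the superposition principle: one must check that the PDE constraint with no-flux boundary conditions and constant mass places $(\rho_t,v_t)$ exactly in the hypotheses of \cite{ags}, that $\eta$-almost every curve lies in $\AC^2$ with values in $\olom$, and, crucially, that the Benamou--Brenier energy disintegrates with an \emph{equality} rather than an inequality, so that $J_{\alpha,\beta}(\mu)$ equals the total mass of $\nu$. The subsequent barycenter-to-Dirac step also requires care, relying on weak* compactness and metrizability of $C_{\alpha,\beta}$ to make the barycenter well defined and to carry out the measurable splitting. In the second inclusion the only delicate point is the rigidity coming from strict convexity of $\Psi$ in the equality case; the remaining steps are routine disintegration arguments.
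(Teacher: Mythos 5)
The paper does not actually prove Theorem~\ref{thm:extremal}: it is recalled from \cite[Theorem 6]{bcfr}, so there is no in-paper argument to compare against line by line. Your outline reproduces essentially the strategy of that reference: the normalization $J_{\alpha,\beta}(\mu_\gamma)=1$, the reduction of nonzero extremal points to the level set $\{J_{\alpha,\beta}=1\}$ by one-homogeneity, the Ambrosio--Gigli--Savar\'e superposition principle to write a finite-energy constrained pair as a barycenter $\mu=\int\mu_\gamma\,d\nu$ with $\nu(\AC^2)=J_{\alpha,\beta}(\mu)$ (the energy identity holding because $\eta$ is concentrated on integral curves of $v$), extremality forcing $\nu$ to be a Dirac mass, and, for the converse, the rigidity coming from the equality case in the convexity of $\Psi$ (forcing $v_1=v_2=\dot\gamma$) together with constancy in time of the densities obtained by testing the continuity equation against functions of $t$ alone. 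The one step you state too quickly is the barycenter-to-Dirac implication: injectivity of $\gamma\mapsto\mu_\gamma$ does not by itself guarantee that some splitting $A,A^c$ yields distinct partial barycenters; you need the standard intermediate observation that if every splitting gives equal barycenters then $\mu_\gamma=\mu$ for $\nu$-a.e.\ $\gamma$ (tested against a countable separating family in the predual of $\M$), after which injectivity collapses $\nu$ to a point mass. With that point made precise, and with the boundary/measurability checks you already flag, the proposal is a faithful reconstruction of the cited proof.
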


We now show that $J_{\alpha,\beta}$ is linear on non-negative combinations of points in $\points$. Such property will be crucial for several computations in this paper, and the proof is postponed to Section \ref{app:proof:lemma}  %

\begin{lem} \label{lem:additivity}
Let $N \in \N, N \geq 1$, $c_j \in \R$ with $c_j >0$, and $\gamma_j \in \AC^2$ for $j=1,\ldots,N$. Let $\mu_{\gamma_j}=(\rho_{\gamma_j},m_{\gamma_j}) \in \points$ be defined according to \eqref{ext_meas}. Then $J_{\alpha,\beta}(\mu_j)=1$ and
\[%
J_{\alpha,\beta} \left( \sum_{j=1}^N c_j \mu_{\gamma_j} \right)=
\sum_{j=1}^N c_j\,.
\]%
\end{lem}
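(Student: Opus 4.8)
The plan is to verify the two claims separately, starting with the single-atom identity and then leveraging it together with the structure of $J_{\alpha,\beta}$ on the set $\calD$. First I would check that each $\mu_{\gamma_j} \in \calD$, i.e.\ that atoms satisfy the continuity equation; this is a direct distributional computation using $\rho_\gamma = a_\gamma\, dt \otimes \delta_{\gamma(t)}$ and $m_\gamma = \dot\gamma(t)\rho_\gamma$, where testing against $\f \in C^\infty_c(X)$ reduces to $\int_0^1 a_\gamma\bigl(\partial_t \f(t,\gamma(t)) + \nabla\f(t,\gamma(t))\cdot\dot\gamma(t)\bigr)\,dt = \int_0^1 a_\gamma \frac{d}{dt}\f(t,\gamma(t))\,dt = 0$, the last equality following from the compact support of $\f$. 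Next I would compute $B(\mu_{\gamma_j})$ and $\norm{\rho_{\gamma_j}}_{\M(X)}$ explicitly: since $m_{\gamma_j} \ll \rho_{\gamma_j}$ with density $\dot\gamma_j$, one gets $B(\mu_{\gamma_j}) = \tfrac12\int_0^1 |\dot\gamma_j(t)|^2\, a_{\gamma_j}\, dt$ and $\norm{\rho_{\gamma_j}}_{\M(X)} = a_{\gamma_j}$. Plugging these into \eqref{prel:reg} and using the definition of $a_{\gamma_j}$ in \eqref{ext_meas} yields $J_{\alpha,\beta}(\mu_{\gamma_j}) = a_{\gamma_j}\bigl(\tfrac{\beta}{2}\int_0^1|\dot\gamma_j|^2\,dt + \alpha\bigr) = 1$, which is the first assertion.

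For the additivity claim I would set $\mu := \sum_{j=1}^N c_j \mu_{\gamma_j} = (\rho,m)$ with $\rho = \sum_j c_j \rho_{\gamma_j}$ and $m = \sum_j c_j m_{\gamma_j}$. Since $\calD$ is a linear subspace and each $\mu_{\gamma_j} \in \calD$, the combination $\mu$ also lies in $\calD$, so $J_{\alpha,\beta}(\mu) = \beta B(\mu) + \alpha\norm{\rho}_{\M(X)}$. The mass term is the easy one: as each $c_j > 0$ and each $\rho_{\gamma_j} \geq 0$, the measures add without cancellation, giving $\norm{\rho}_{\M(X)} = \sum_j c_j \norm{\rho_{\gamma_j}}_{\M(X)} = \sum_j c_j a_{\gamma_j}$. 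The delicate point is the additivity of the Benamou--Brenier energy $B$, since $B$ is genuinely convex (not linear) in general, so I cannot simply split it across the sum for arbitrary measures.

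The key observation that rescues linearity is that $\Psi$ is one-homogeneous and that, along each atom, the momentum is tied to the mass by the fixed velocity field $\dot\gamma_j$. Concretely, I would choose a common reference measure $\lambda := \sum_j c_j \rho_{\gamma_j} = \rho$ (note $m \ll \rho$), and compute the Radon--Nikodym derivatives $d\rho/d\lambda$ and $dm/d\lambda$. The crucial structural fact is that the curves $\gamma_j$, viewed as graphs $t \mapsto (t,\gamma_j(t))$ in $X$, are concentrated on disjoint sets up to the $dt\otimes\delta$ structure — more precisely, at $\rho$-a.e.\ point the velocity $dm/d\rho$ is determined fiberwise, and on the graph of $\gamma_j$ one has $dm/d\rho = \dot\gamma_j(t)$. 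Evaluating $\Psi(d\rho/d\lambda, dm/d\lambda)$ then collapses, via one-homogeneity $\Psi(s\,a, s\,b) = s\,\Psi(a,b)$, to a sum of the individual contributions, yielding $B(\mu) = \sum_j c_j B(\mu_{\gamma_j})$. I expect this disjointness/fiberwise argument to be the main obstacle, because one must handle the possibility that distinct curves overlap on a set of positive $dt$-measure and argue that the quadratic structure of $\Psi$ still splits additively there; this is precisely the content that the full proof in Section~\ref{app:proof:lemma} will need to establish carefully. Granting it, combining the two pieces gives
\[
J_{\alpha,\beta}(\mu) = \beta \sum_{j=1}^N c_j B(\mu_{\gamma_j}) + \alpha \sum_{j=1}^N c_j \norm{\rho_{\gamma_j}}_{\M(X)} = \sum_{j=1}^N c_j\, J_{\alpha,\beta}(\mu_{\gamma_j}) = \sum_{j=1}^N c_j\,,
\]
using $J_{\alpha,\beta}(\mu_{\gamma_j}) = 1$ from the first part.
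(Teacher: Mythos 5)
Your overall strategy coincides with the paper's: check $J_{\alpha,\beta}(\mu_{\gamma_j})=1$ from \eqref{formula B}, then show that both the Benamou--Brenier term and the mass term split additively over the conic combination. The mass term and the single-atom computation are fine. But the step you explicitly defer (``granting it'') is exactly the crux of the lemma, and as written your argument is incomplete there. The danger you identify is genuine: if two curves $\gamma_i$ and $\gamma_j$ coincided on a set of times of positive measure while carrying \emph{different} velocities there, then on that set $dm/d\rho$ would be the mass-weighted average of $\dot\gamma_i$ and $\dot\gamma_j$, and the strict subadditivity of $\Psi$ (namely $\Psi(t_1+t_2,x_1+x_2)\le\Psi(t_1,x_1)+\Psi(t_2,x_2)$, strict unless $x_1/t_1=x_2/t_2$) would give $B(\mu)<\sum_j c_j B(\mu_{\gamma_j})$, so the claimed identity would fail. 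Your appeal to one-homogeneity alone does not rule this out.

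The missing ingredient is the standard Sobolev fact that $\dot\gamma_i=\dot\gamma_j$ for a.e.\ $t$ in the coincidence set $\{t:\gamma_i(t)=\gamma_j(t)\}$; apply \cite[Theorem 4.4]{evansgariepy} to $\gamma_i-\gamma_j\in H^1$, whose derivative vanishes a.e.\ on its zero set. This is precisely how the paper closes the gap: it defines a single velocity field $v(t,x):=\dot\gamma_j(t)$ for $(t,x)\in\gr\gamma_j$ and $v:=0$ elsewhere, notes that $v$ is well defined because overlapping curves share their velocity at a.e.\ time of overlap, and then verifies $m=v\rho$ with $v(t,\gamma_j(t))=\dot\gamma_j(t)$ for every $j$. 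With that in hand, \eqref{formula B} yields
\[
B(\mu)=\frac12\int_X|v|^2\,d\rho=\sum_{j=1}^N c_j a_{\gamma_j}\,\frac12\int_0^1|\dot\gamma_j(t)|^2\,dt=\sum_{j=1}^N c_j B(\mu_{\gamma_j})\,,
\]
and the rest of your computation goes through verbatim. So the outline is the right one; you only need to replace ``granting it'' by this one-line observation about derivatives on coincidence sets.
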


\section{The dynamic inverse problem and conditional gradient method} \label{sec:OT_regularization}
In this section we introduce the dynamic inverse problem we aim at solving, following the approach of \cite{bf}. Moreover we set up the functional analytic framework necessary to state the numerical algorithm presented in Section \ref{sec:algorithm}. %
Recall that $\Om \subset \R^d$ is an open bounded domain, $d \in \N$, $d \geq 1$. Let $\{H_t\}_{t \in [0,1]}$ be a family of real Hilbert spaces, $K_t^* \colon \M(\olom) \to H_t$ a family of linear continuous forward operators parametrized by $t\in [0,1]$. 
Given some data $f_t \in H_t$ for a.e.~$t \in (0,1)$, consider the dynamic inverse problem of finding a curve $t \mapsto \rho_t \in \M(\olom)$ such that
\begin{equation} \label{prel:inverse}
K_t^* \rho_t = f_t \quad \text{for a.e.} \,\, t \in (0,1) \,. 
\end{equation}
It has been recently proposed \cite{bf} to regularize the above problem with the optimal transport energy $J_{\alpha,\beta}$ defined in \eqref{prel:reg}, where $\alpha,\beta>0$ are fixed parameters. This leads to consider the Tikhonov functional $T_{\alpha,\beta} \colon \M \to [0,+\infty]$ with associated minimization problem
\begin{equation}
\min_{\mu  \in \M} T_{\alpha,\beta}(\mu) \,, \quad T_{\alpha,\beta}(\mu):= 
  \F (\mu)
  + J_{\alpha,\beta}(\mu)\,,  
  \label{eq:prel_main_prob} \tag{$\mathcal{P}$}
\end{equation}
where the fidelity term $\F \colon \M \to [0,+\infty]$ is defined by
\begin{equation} \label{prel:fidelity}
\F(\mu):=
\begin{cases}
\displaystyle \frac{1}{2}\int_0^1 \norm{K^*_t \rho_t - f_t}^2_{H_t}dt &  \, \text{ if } \rho=dt \otimes \rho_t \,, \,\, (t\mapsto \rho_t) \in \curves \,, \\
+ \infty  & \, \text { otherwise.}
\end{cases}
\end{equation}
In the following we will denote by $f$, $K^*\rho$, $Kf$ the maps $t\mapsto f_t$, $t \mapsto K_t^* \rho_t$, $t \mapsto K_tf_t$ respectively,  where $f_t \in H_t$ for a.e.~$t \in (0,1)$ and $\rho_t$ is the disintegration of $\rho$ with respect to time.  
The fidelity term $\F$ serves to track the discrepancy in \eqref{prel:inverse} continuously in time. Following \cite{bf}, this is achieved by introducing the Hilbert space of square integrable maps $f \colon [0,1] \to H:=\cup_t H_t$, denoted by $\Ltwo $. The data $f$ is then assumed to belong to $\Ltwo$. The assumptions under which this procedure can be made rigorous are briefly summarized in Section \ref{sec:assumptions} below, see \ref{ass:H1}-\ref{ass:H3}, \ref{ass:K1}-\ref{ass:K3}. Under these assumptions, we have that $\mathcal{F}$ is well defined, see Remark \ref{rem:fidelity}.  
Such framework allows to model a variety of time-dependent acquisition strategies in dynamic imaging, as seen in Section \ref{subsec:Fourier_example}. %
We are now ready to recall an existence result for \eqref{eq:prel_main_prob} (see \cite[Theorem 4.4]{bf}).

\begin{thm} \label{thm:existence}
Assume \ref{ass:H1}-\ref{ass:H3}, \ref{ass:K1}-\ref{ass:K3} as in Section \ref{sec:assumptions}. Let $f \in\Ltwo$ and $\alpha, \beta >0$.  Then $T_{\alpha,\beta}$ is weak* lower semicontinuous on $\M$ and
there exists $\mu^* \in \mathcal{D}$ that solves the minimization problem \eqref{eq:prel_main_prob}. Moreover $\rho^*$ disintegrates in $\rho^*=dt \otimes \rho_t^*$ with $(t \mapsto \rho^*_t) \in \pcurves$.
If in addition $K_t^*$ is injective for a.e.~$t \in (0,1)$, then $\mu^*$ is unique. 
\end{thm}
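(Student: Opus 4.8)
The plan is to apply the direct method of the calculus of variations. I would extract a minimizing sequence $\mu^n = (\rho^n, m^n)$ for $T_{\alpha,\beta}$, establish its weak* precompactness via coercivity, and then show that $T_{\alpha,\beta}$ is sequentially weak* lower semicontinuous, so that any weak* cluster point is a minimizer. The two ingredients, coercivity and lower semicontinuity, are treated independently.

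For coercivity, since $\F \geq 0$ the values $J_{\alpha,\beta}(\mu^n)$ stay bounded along a minimizing sequence; in particular $\mu^n \in \calD$, and finiteness of $B(\mu^n)$ forces $\rho^n \geq 0$ through the definition of $\Psi$, so $\alpha\norm{\rho^n}_{\M(X)}$ bounded gives a uniform bound on $\norm{\rho^n}_{\M(X)}$. To bound the momenta I would use $m^n \ll \rho^n$ together with Cauchy--Schwarz,
\[
\norm{m^n}_{\M(X;\R^d)} = \int_X \abs{\frac{dm^n}{d\rho^n}}\, d\rho^n \leq \left( 2 B(\mu^n) \right)^{1/2} \norm{\rho^n}_{\M(X)}^{1/2}\,,
\]
which is uniformly bounded. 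By the sequential Banach--Alaoglu theorem on bounded subsets of the separable-predual spaces $\M(X)$ and $\M(X;\R^d)$, I extract a subsequence with $\rho^n \weakstar \rho^*$ and $m^n \weakstar m^*$.

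For lower semicontinuity the regularizer is the easy part: the constraint $\calD$ is weak* closed, being defined by linear identities tested against fixed $C^\infty_c(X)$ functions; $\norm{\cdot}_{\M(X)}$ is weak* lower semicontinuous; and $B$, as the integral of the convex, one-homogeneous, lower semicontinuous integrand $\Psi$, is weak* lower semicontinuous by the standard theory of integral functionals of measures. The genuinely delicate part is the fidelity $\F$: one must pass to the limit in $\frac{1}{2}\int_0^1 \norm{K_t^*\rho_t - f_t}_{H_t}^2\,dt$ knowing only that $\rho^n \weakstar \rho^*$. This is where Assumptions \ref{ass:H1}--\ref{ass:H3} and \ref{ass:K1}--\ref{ass:K3} enter: they guarantee that the time-disintegrations are narrowly continuous, that finite Benamou--Brenier energy yields enough equi-continuity in time (via Lemma \ref{lem:prop cont}) to pass to pointwise-in-$t$ weak* limits $\rho^n_t \weakstar \rho^*_t$, and that each $K_t^*$ sends weak* convergence into weak convergence in $H_t$; combined with weak lower semicontinuity of the Hilbert norm and Fatou's lemma in the time variable this yields $\F(\mu^*) \leq \liminf_n \F(\mu^n)$. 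I expect this fidelity lower-semicontinuity, together with identifying the limit as a narrowly continuous curve into $\M^+(\olom)$, to be the main obstacle, since it requires the full structural description of curves with finite regularizer rather than mere convexity.

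Adding the two estimates gives $T_{\alpha,\beta}(\mu^*) \leq \liminf_n T_{\alpha,\beta}(\mu^n)$, so $\mu^*$ minimizes; finiteness of $J_{\alpha,\beta}(\mu^*)$ forces $\rho^* \geq 0$ and, through Lemma \ref{lem:prop cont}, the disintegration $\rho^* = dt \otimes \rho^*_t$ with $(t \mapsto \rho^*_t) \in \pcurves$. For uniqueness under injectivity of $K_t^*$, I would argue by strict convexity. Given minimizers $\mu_1,\mu_2$, convexity of $T_{\alpha,\beta}$ makes their midpoint a minimizer too, whence the convexity inequality for the sum $\F + J_{\alpha,\beta}$ is an equality at the midpoint, forcing equality separately in $\F$ and in $J_{\alpha,\beta}$. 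The fidelity is strictly convex in the observable $t \mapsto K_t^*\rho_t$, so $K_t^*\rho_{1,t} = K_t^*\rho_{2,t}$ for a.e.\ $t$, and injectivity gives $\rho_1 = \rho_2 =: \rho$. With $\rho$ fixed, equality in $J_{\alpha,\beta}$ reduces to equality in $B(\rho,\cdot)$, which is strictly convex on $\{m \ll \rho\}$ because the Radon--Nikodym density enters quadratically; since the midpoint stays feasible by linearity of the continuity equation, this forces $m_1 = m_2$, and hence $\mu^*$ is unique.
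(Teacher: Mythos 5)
Your proposal is correct and follows essentially the same route as the paper: the statement is not reproved here but recalled from \cite[Theorem 4.4]{bf}, and the ingredients you invoke --- the bound $\norm{m^n}_{\M(X;\R^d)}\leq (2B(\mu^n))^{1/2}\norm{\rho^n}_{\M(X)}^{1/2}$ for coercivity, compactness with pointwise-in-time weak* convergence of the disintegrations, weak*-to-weak continuity of $K_t^*$ for the fidelity, and strict convexity for uniqueness --- are exactly those the paper packages into Lemmas \ref{lem:prop cont}, \ref{lem:prop B}, \ref{lem:prop J} and \ref{lem:prop K}. The only cosmetic difference is that you obtain lower semicontinuity of $\F$ via pointwise weak lower semicontinuity in each $H_t$ plus Fatou, whereas the paper's framework gets it in one step from the weak convergence $K^*\rho^n \weak K^*\rho^*$ in the Hilbert space $\Ltwo$ provided by Lemma \ref{lem:prop K}.
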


The proposed numerical approach for 
\eqref{eq:prel_main_prob} is based on the conditional gradient method, which consists in seeking minimizers of local linear
approximations of the target functional. %
As standard practice \cite{Bredies2009,K-Pikkarainen,pieper}, we first replace \eqref{eq:prel_main_prob} with a surrogate minimization problem, by defining the functional $\tilde{T}_{\alpha,\beta}$ as in \eqref{eq:main_prob_mod} below. %
The key step in a conditional gradient method is then to find the steepest descent direction for a linearized version of $\tilde{T}_{\alpha,\beta}$. In Section \ref{subsec:linearized} we show that, in order to find such direction, it is sufficient to solve the minimization problem 
\begin{equation} \label{prel:aux4} %
\min_{\mu \in \ext(C_{\alpha,\beta})} -\ps{\rho}{w} \,,%
\end{equation}
where $C_{\alpha,\beta}:=\{J_{\alpha,\beta} \leq 1\}$, $w_t:=-K_t(K_t^* \tilde{\rho_t} - f_t) \in C(\olom)$ is the dual variable associated to the current iterate $(\tilde{\rho},\tilde{m})$,  and the linear term  $\mu \mapsto \ps{\rho}{w}$ is defined in \eqref{eq:scalarproduct_alg} below. 
Finally, in Section~\ref{subsec:primal_dual_gap} we define the primal-dual gap $G$ associated to \eqref{eq:prel_main_prob}, and prove optimality conditions. %

\subsection{Functional analytic setting for time continuous fidelity term}\label{sec:assumptions}

  In order to define the continuous sampling fidelity term $\F$ at \eqref{prel:fidelity}, the authors of \cite{bf} introduce suitable assumptions on the measurement spaces $H_t$ and on the forward operators $K_t^* \colon \M(\olom) \to H_t$. %

    \begin{assumption}
    For a.e.~$t \in (0,1)$, let $H_t$ be a real Hilbert space with norm $\norm{\cdot}_{H_t}$ and scalar product $\ps{\cdot}{\cdot}_{H_t}$.  Let $D$ be a real Banach space with norm denoted by $\norm{\cdot}_{D}$. Assume that for a.e.~$t \in (0,1)$ there exists a linear continuous operator $i_t : D \to H_t$ with the following properties:
    \begin{enumerate}[label=\textnormal{(H\arabic*)}]
    \item $\norm{i_t} \leq C$ for some constant $C>0$ not depending on $t$, \label{ass:H1} 
    \item $i_t(D)$ is dense in $H_t$, \label{ass:H2}
    \item  the map $t \in [0,1] \mapsto \ps{i_t\varphi}{i_t \psi}_{H_t} \in \mathbb{R}$ is Lebesgue measurable for every fixed $\varphi,\psi \in D$. 	\label{ass:H3}
    \end{enumerate}
\end{assumption}

  Setting $H := \bigcup_{t\in [0,1]} H_t$, it is possible to define the space of square integrable maps $f \colon [0,1] \to H$ such that $f_t \in H_t$ for a.e.~$t \in (0,1)$, that is,
  \begin{equation} \label{def:ltwoh}
  \Ltwo=L^2([0,1] ;H):= \left\{ f \colon [0,1] \to H \, \colon \, f \ \text{strongly measurable}, \int_0^1 \|f_t\|_{H_t}^2 \,dt < \infty \right\}\,.
  \end{equation}
  The strong measurability mentioned in \eqref{def:ltwoh} is an extension to time dependent spaces of the classical notion of strong measurability for Bochner integrals. The common subset $D$ is employed to construct step functions in a suitable way. An important property of strong measurability is that $t \mapsto \ps{f_t}{g_t}_{H_t}$ is Lebesgue measurable whenever $f,g$ are strongly measurable \cite[Remark 3.4]{bf}. %
 Moreover $\Ltwo$ is a Hilbert space with inner product and norm given by
  \begin{equation} \label{eq:scalar_product}
  \ps{f}{g}_{L^2_H}:= \int_0^1 \ps{f_t}{g_t}_{H_t} \, dt \,, \quad
  \norm{f}_{L^2_H}:=\left( \int_0^1 \norm{f_t}_{H_t}^2 \, dt \right)^{1/2}\,,
  \end{equation}
  respectively \cite[Theorem 3.13]{bf}.  We refer the interested reader to \cite[Section 3]{bf} for more details on the construction of such spaces and their properties.  
  We will now state the assumptions required for the measurement operators $K_t^*$.
    
    \begin{assumption}
   For a.e.~$t \in (0,1)$ the linear continuous operators  $K_t^* : \mathcal{M}(\olom) \to H_t$ satisfy:
	\begin{enumerate}[label=\textnormal{(K\arabic*)}]
    \item $K_t^*$ is weak*-to-weak continuous, with pre-adjoint denoted by $K_t: H_t \to C(\overline \Omega)$,  \label{ass:K1}
    \item $\norm{K^*_t} \leq C$ for some constant $C>0$ not depending on $t$, \label{ass:K2}
    \item the map $t \in [0,1] \mapsto K_t^* \rho \in H_t$ is strongly measurable for every fixed $\rho \in \mathcal{M}(\olom)$.	\label{ass:K3}
    \end{enumerate}
\end{assumption}

\begin{rem}\label{rem:fidelity}
After assuming \ref{ass:H1}-\ref{ass:H3}, \ref{ass:K1}-\ref{ass:K3}, the fidelity term $\F$ introduced at \eqref{prel:fidelity} is well-defined. Indeed, the conditions $\rho=dt \otimes \rho_t$ and $(t \mapsto \rho_t) \in \curves$ imply that $t \mapsto K_t^*\rho_t$ belongs to $\Ltwo$ by Lemma \ref{lem:prop K}. We further remark that $\F(\mu)$ is finite whenever $J_{\alpha,\beta}(\mu)<+\infty$, as in this case we have $\rho =dt \otimes \rho_t$ with $(t \mapsto \rho_t) \in \pcurves$, by Lemmas \ref{lem:prop cont}, \ref{lem:prop B}. 
\end{rem}

\subsection{Surrogate minimization problem} \label{subsec:surrogate}

Let $f \in \Ltwo$ and define the map  $\varphi \colon \R_+\rightarrow \R_{+}$
  \begin{equation} \label{def:fi}
    \varphi(t) := 
    \begin{cases} 
        t  & \text{ if } t \leq M_0, \\
        \frac{t^2 +  M_0^2}{2 M_0} & \text{ if } t > M_0,
  \end{cases} 
  \end{equation}
where we set $M_0 := T_{\alpha,\beta}(0)$. Notice that by \eqref{formula B} we
have $J_{\alpha,\beta}(0)=0$, so that
\begin{equation} \label{def:M0}
M_0= \frac12 \int_0^1 \norm{f_t}_{H_t}^2 \, dt \,,
\end{equation}
highlighting the dependence of $\varphi$ on $f$.
Recalling the definition of $\F$ at \eqref{prel:fidelity},
 define the surrogate minimization problem
  \begin{equation} \label{eq:main_prob_mod} \tag{$\tilde{\mathcal{P}}$}
  \min_{\mu \in \M} \tilde{T}_{\alpha,\beta} (\mu)\,, \quad \tilde{T}_{\alpha,\beta} (\mu) :=\F(\mu) + \varphi(J_{\alpha,\beta} (\mu))   \,.
  \end{equation} 
Notice that \eqref{eq:prel_main_prob} and \eqref{eq:main_prob_mod} 
  share the same set of minimizers, and they are thus equivalent. This is readily seen after noting that solutions to \eqref{eq:prel_main_prob} and \eqref{eq:main_prob_mod} belong to the set
  $\{\mu \in \M :J_{\alpha,\beta}(\mu) \leq M_0\}$, thanks to the estimate $t \leq \f(t)$, and that $T_{\alpha,\beta}$ and $\tilde
  T_{\alpha,\beta}$ coincide on the said set. 
  
  We remark that the surrogate minimization problem
  \eqref{eq:main_prob_mod} is a technical modification of
  \eqref{eq:prel_main_prob} introduced just to ensure that the partially
  linearized problem defined in the following section is coercive.

  \subsection{Linearized problem}\label{subsec:linearized}

  Fix some data $f \in \Ltwo$ and a curve $(t \mapsto \tilde
  \rho_t) \in \curves$.  We  define the associated dual variable $t \mapsto w_t$ by
  \begin{equation}
    w_t := -K_t(K_t^* \tilde{\rho}_t -  f_t ) \in C(\olom)\,,
    \label{eq:dual_variable}
  \end{equation}
  and the map $\mu \in \M \mapsto \langle \rho, w\rangle \in \R \cup\{\pm \infty\}$ as   
  \begin{equation}\label{eq:scalarproduct_alg}
  \ps{\rho}{w} :=  
  \begin{cases}
  \displaystyle \int_{0}^1\langle \rho_t, w_t \rangle_{\M(\olom),C(\olom)} \, dt & \,\, \text{if } \rho=dt \otimes \rho_t\, , \,\, (t \mapsto \rho_t) \in \curves\,, \\ %
  -\infty &\,\, \text{otherwise.}
 \end{cases}
  \end{equation}
\begin{rem}\label{rem:scalar_product}
The above map is well-defined: indeed, assuming that $(t \mapsto \rho_t) \in \curves$, we have $(t \mapsto K_t^*\rho_t ) \in \Ltwo$ by Lemma \ref{lem:prop K}. Similarly, also $(t \mapsto K_t^*\tilde\rho_t) \in \Ltwo$. Thus, recalling \ref{ass:K1}, we infer
 \begin{equation} \label{eq:equiv_prod_scalare}
\ps{\rho}{w}= - \ps{K^*\rho}{K^* \tilde \rho - f}_{L^2_H}\,,
 \end{equation}
 which is well-defined and finite, being a scalar product in the Hilbert space $\Ltwo$ (see \eqref{eq:scalar_product}). Moreover if $J_{\alpha,\beta}(\mu)<+\infty$, then $\ps{\rho}{w}$ is finite, since $\rho =dt \otimes \rho_t$ for $(t \mapsto \rho_t) \in \pcurves$, by Lemmas \ref{lem:prop cont}, \ref{lem:prop B}.
  \end{rem}
   Let $\f$ and $M_0$ be as in \eqref{def:fi}-\eqref{def:M0}. We consider the following linearized version of \eqref{eq:main_prob_mod}
  \begin{equation} \label{aux2}
  \min_{\mu \in \M} - \ps{\rho}{w} + \f (J_{\alpha,\beta}(\mu))\,,
  \end{equation}
  which is well-posed by Theorem \ref{ex min gen}. %
 The objective of this section is to prove the existence of a solution to \eqref{aux2} belonging, up to a multiplicative constant, to the extremal points of the sublevel set $C_{\alpha,\beta}:=\{J_{\alpha,\beta} \leq 1\}$. %
To this end, consider the problem
  \begin{equation} \label{eq:aux4}
  \min_{\mu \in C_{\alpha,\beta}} -\ps{\rho}{w}\,.
  \end{equation}
  In the following proposition we prove that \eqref{eq:aux4} admits a minimizer $\mu^* \in \ext(C_{\alpha,\beta})$. Moreover we show that a suitably rescaled version of $\mu^*$ solves \eqref{aux2}.

 \begin{prop}\label{prop:wolfe equiv}
  Assume \ref{ass:H1}-\ref{ass:H3}, \ref{ass:K1}-\ref{ass:K3} as in Section \ref{sec:assumptions}. Let  $f \in \Ltwo$, $\alpha, \beta >0$. Then, there exists a solution $\mu^* \in \ext(C_{\alpha,\beta})$ to \eqref{eq:aux4}. Moreover $M \mu^*$ is a minimizer for \eqref{aux2},
  where 
  \begin{equation}
      \label{eq:M_choice}
      M :=
      \begin{cases}
          0  & \,\, \text{ if } \,\,\, \langle \rho^*, w \rangle  \leq 1 \,,\\
          M_0  \langle \rho^*, w \rangle  & \,\, \text{ if } \,\,\, \langle \rho^*, w \rangle > 1 \,.
  \end{cases}
    \end{equation}
    \end{prop}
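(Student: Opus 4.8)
The plan is to treat the two assertions separately: first the existence of a minimizer of \eqref{eq:aux4} in $\ext(C_{\alpha,\beta})$, and then the homogeneity-based rescaling that upgrades it to a minimizer of \eqref{aux2}. For the first assertion I would argue by a Bauer-type minimum principle. The set $C_{\alpha,\beta}=\{J_{\alpha,\beta}\le 1\}$ is convex by convexity of $J_{\alpha,\beta}$, and weak* compact: the constraint $J_{\alpha,\beta}(\mu)\le 1$ forces $\alpha\norm{\rho}_{\M(X)}\le 1$ and $\beta B(\mu)\le 1$, whence the Cauchy--Schwarz estimate $\norm{m}_{\M(X;\R^d)}\le (2B(\mu)\norm{\rho}_{\M(X)})^{1/2}$ gives a total-variation bound on $m$ as well; thus $C_{\alpha,\beta}$ is norm-bounded in $\M$, and combining weak* lower semicontinuity of $J_{\alpha,\beta}$ with weak* closedness of $\calD$ yields weak* closedness, so Banach--Alaoglu furnishes compactness. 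On this set the objective is weak* continuous: by Remark \ref{rem:scalar_product} and \eqref{eq:equiv_prod_scalare} we have $-\ps{\rho}{w}=\ps{K^*\rho}{K^*\tilde\rho-f}_{\Ltwo}$, and \ref{ass:K1} makes $\rho\mapsto K^*\rho$ weak*-to-weak continuous into $\Ltwo$, so pairing with the fixed element $K^*\tilde\rho-f$ is weak* continuous.

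Since the objective is affine and weak* continuous on a convex weak* compact set, its set of minimizers is a nonempty, weak* compact, convex face of $C_{\alpha,\beta}$; by Krein--Milman this face possesses an extreme point $\mu^*$, which is then an extreme point of $C_{\alpha,\beta}$ itself. Theorem \ref{thm:extremal} identifies $\mu^*\in\{0\}\cup\points=\ext(C_{\alpha,\beta})$, completing the first assertion and producing the distinguished solution $\mu^*=(\rho^*,m^*)$ used in \eqref{eq:M_choice}.

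For the second assertion the structural fact is that $J_{\alpha,\beta}$ is positively one-homogeneous (as $\Psi$ and $\norm{\cdot}_{\M(X)}$ are one-homogeneous and $\calD$ is a linear subspace), while $\mu\mapsto\ps{\rho}{w}$ is linear. I would write each $\mu$ with $0<J_{\alpha,\beta}(\mu)<\infty$ as $\mu=r\nu$, where $r=J_{\alpha,\beta}(\mu)>0$ and $J_{\alpha,\beta}(\nu)=1$, noting that $J_{\alpha,\beta}(\mu)=0$ forces $\mu=0$. Setting $s^*:=\ps{\rho^*}{w}=\max_{\mu\in C_{\alpha,\beta}}\ps{\rho}{w}\ge 0$ (nonnegative since $0\in C_{\alpha,\beta}$), the objective of \eqref{aux2} reduces to $\min\{0,\inf_{r>0}(\f(r)-r\,s^*)\}$, the first term coming from $\mu=0$ and the infimum over directions being attained at $\nu=\mu^*$ whenever $s^*>0$ (in which case scaling shows $J_{\alpha,\beta}(\mu^*)=1$, so the maximum of $\ps{\rho}{w}$ over $\{J_{\alpha,\beta}=1\}$ coincides with that over $C_{\alpha,\beta}$).

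It then remains to solve this scalar problem using the explicit piecewise form of $\f$ in \eqref{def:fi}. When $s^*\le 1$ one checks directly that $\f(r)-r\,s^*\ge 0$ for all $r\ge 0$, with infimum $0$ attained as $r\to 0$; hence the minimizer is $\mu=0=M\mu^*$ with $M=0$, matching \eqref{eq:M_choice}. When $s^*>1$ the unique critical point falls in the quadratic branch $r>M_0$ at $r^*=M_0\,s^*=M_0\ps{\rho^*}{w}$, and comparing its value $\tfrac{M_0}{2}(1-(s^*)^2)$ against the endpoint value $M_0(1-s^*)$ of the linear branch shows $r^*$ is the global minimizer; thus the minimizer of \eqref{aux2} is $r^*\mu^*=M\mu^*$ with $M=M_0\ps{\rho^*}{w}$, again matching \eqref{eq:M_choice}. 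I expect the main obstacle to be the care required in the scalar case analysis—in particular reconciling the boundary maximum over $\{J_{\alpha,\beta}=1\}$ with the ball maximum over $C_{\alpha,\beta}$ in the degenerate subcase $s^*=0$, and confirming that the interior critical point genuinely beats the linear branch—rather than the extreme-point argument, which is routine once weak* compactness and continuity are in place.
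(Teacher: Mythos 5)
Your proposal is correct and follows essentially the same route as the paper: weak* compactness and convexity of the minimizing face, Krein--Milman to extract an extremal minimizer, then one-homogeneity of $J_{\alpha,\beta}$ to reduce \eqref{aux2} to the scalar problem $\inf_{\lambda\ge 0}\,\f(\lambda)-\lambda\langle\rho^*,w\rangle$ whose solution is $M$. The only (harmless) imprecision is attributing the weak* continuity of $\rho\mapsto\langle K^*\rho,K^*\tilde\rho-f\rangle_{L^2_H}$ directly to \ref{ass:K1}; on $C_{\alpha,\beta}$ this actually comes from the compactness statement \eqref{topology} of Lemma \ref{lem:prop J} combined with Lemma \ref{lem:prop K}, which is exactly how the paper closes that step.
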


 The above statement is reminiscent of the classical Bauer Maximum Principle \cite[Theorem 7.69]{aliprantis}. In our case, however, there is no clear topology that makes the set $C_{\alpha,\beta}$ compact and the linearized map defined in \eqref{eq:scalarproduct_alg} continuous (or upper semicontinuous). Therefore an ad-hoc proof is required.
  \begin{proof}
   Let $ \hat\mu \in C_{\alpha,\beta}$ be a solution to \eqref{eq:aux4}, which exists thanks to Theorem \ref{ex min gen} with the choice $\f(t):=\rchi_{(-\infty,1]}(t)$. Consider the set $S := \left\{\mu \in C_{\alpha,\beta} :  \langle\rho,w\rangle=\langle \hat\rho,w\rangle\right\}$ of all solutions to \eqref{eq:aux4}. 
Note that $S$ is bounded with respect to the total variation on $\M$, due to  \eqref{lem:prop J est} and definition of $C_{\alpha,\beta}$. In particular the weak* topology of $\M$ is metrizable in $S$. We claim that $S$ is compact in the same topology. Indeed, given a sequence $\{\mu^n\}$ in $S$ we have by definition that
  \begin{gather} \label{eq:min ext:1}
  \sup_n J_{\alpha,\beta}(\mu^n) \leq 1\,, \qquad
  \langle\rho^n,w\rangle = \langle \hat\rho,w\rangle \,\, \text{ for every }  \,\, n \in \N\,.
  \end{gather} 
  Therefore, \eqref{eq:min ext:1} and Lemma \ref{lem:prop J} imply that, up to subsequences, $\mu^n$ converges to some $\mu \in \M$ in the sense of \eqref{topology}. By \eqref{eq:min ext:1} and by the weak* sequential lower semicontinuity of $J_{\alpha,\beta}$ (Lemma \ref{lem:prop J}), we infer $\mu \in C_{\alpha,\beta}$. Moreover by \eqref{eq:min ext:1}, \eqref{eq:equiv_prod_scalare} and Lemma \ref{lem:prop K} we also conclude that $\mu \in S$, hence proving compactness. Also notice that $S$ is convex due to the convexity of $J_{\alpha,\beta}$ (Lemma \ref{lem:prop J}) and linearity of the constraint. Since $S \neq \emptyset$,  by Krein-Milman's Theorem we have that $\ext(S) \neq \emptyset$. Let $\mu^* \in \ext(S)$. If we show that $\mu^* \in \ext(C_{\alpha,\beta})$, the thesis is achieved by definition of $S$. Hence, assume that $\mu^*$ can be decomposed as
   \begin{equation} \label{eq:min ext:3}
  \mu^* = \lambda \mu^1 + (1-\lambda) \mu^2 
  \end{equation}
  with $\mu^j =(\rho^j,m^j)\in C_{\alpha,\beta}$ and $\lambda \in (0,1)$. Assume that $\mu^1$ belongs to $C_{\alpha,\beta} \smallsetminus S$. By \eqref{eq:min ext:3} and the minimality of the points in $S$ for \eqref{eq:aux4} we infer 
  $-\langle \hat\rho,w\rangle<-\langle \rho^*, w\rangle$, 
  which is a contradiction since $\mu^* \in S$. Therefore $\mu^1 \in S$. Similarly also $\mu^2 \in S$. Since $\mu^* \in \ext(S)$, from \eqref{eq:min ext:3} we infer
  $\mu^* = \mu^1=\mu^2$, showing that $\mu^* \in \ext(C_{\alpha,\beta})$.
  
Assume now that $\mu^* \in \ext(C_{\alpha,\beta})$ minimizes in  \eqref{eq:aux4}. If $\mu^*=0$, it is straightforward to check that $0$ minimizes in \eqref{aux2}. Hence assume $\mu^* \neq 0$, so that $J_{\alpha,\beta}(\mu^*)>0$ by \eqref{lem:prop J est}. Since the functional at \eqref{eq:aux4} is linear and $\mu^*$ is a minimizer, we can scale by $J_{\alpha,\beta}(\mu^*)$ and exploit the one-homogeneity of $J_{\alpha,\beta}$ to obtain $J_{\alpha,\beta}(\mu^*)=1$.  
For every $\mu=(\rho,m) \in \mathcal{M}$ such that $J_{\alpha,\beta}(\mu)<+\infty$ one has
   \begin{equation}\label{eq:fenchel:0}
  - \langle \rho,w \rangle + \varphi(J_{\alpha,\beta}(\mu)) \geq  - J_{\alpha,\beta}(\mu) \langle \rho^*, w\rangle+  \varphi(J_{\alpha,\beta}(\mu))\,,
  \end{equation}
  since $\mu^*$ is a minimizer, $J_{\alpha,\beta}$ is non-negative and one-homogeneous, and since $J_{\alpha,\beta}(\mu)=0$ if and only if $\mu=0$ by Lemma \ref{lem:prop B}. Again one-homogeneity implies 
    \begin{equation}\label{eq:fenchel}
  \inf_{\mu \in \calM} - J_{\alpha,\beta}(\mu) \ps{\rho^*}{w} +  \varphi(J_{\alpha,\beta}(\mu))  =  \inf_{\lambda \geq 0} - \lambda \langle \rho^*, w\rangle+  \varphi(\lambda )\, .
  \end{equation}
It is immediate to check that $M$ defined in \eqref{eq:M_choice} is a minimizer for the right-hand side problem in \eqref{eq:fenchel}. 
  Hence from \eqref{eq:fenchel:0}-\eqref{eq:fenchel}, one-homogeneity of $J_{\alpha,\beta}$ and the fact that $J_{\alpha,\beta}(\mu^*) = 1$, we conclude that $M \mu^*$ is a minimizer for \eqref{aux2}.
  \end{proof}

\subsection{The primal-dual gap} \label{subsec:primal_dual_gap}
In this section we introduce the primal-dual gap associated to \eqref{eq:prel_main_prob}. 
  \begin{definition} \label{def:pd_gap} 
  The primal-dual gap is defined as the map
   $G \colon \M \to [0,+\infty]$ such that  
  \begin{equation}
  \label{eq:dual_gap}
  G(\mu):= 
  \begin{cases}
   J_{\alpha,\beta}(\mu) - \f (J_{\alpha,\beta}(\hat\mu)) - \ps{\rho - \hat\rho}{w}  & \, \text{ if } \, J_{\alpha,\beta}(\mu)<+\infty\,, \\
   + \infty    &  \, \text{ otherwise,}
   \end{cases}
  \end{equation}
  for $\mu \in \M$. 
Here $w_t := -K_t(K_t^*  \rho_t -  f_t )$, the product 
    $\langle \cdot,\cdot \rangle$ is defined in \eqref{eq:scalarproduct_alg}, the map $\f$ at \eqref{def:fi},  and $ \hat\mu \in \M$ is a solution to \eqref{aux2}.%
 \end{definition}
 
Notice that $G$ is well-defined. Indeed, assume that $\mu \in \M$ is such that $J_{\alpha,\beta}(\mu)<+\infty$ %
and let $ \hat\mu \in \M$ be a solution to \eqref{aux2}. %
In particular $J_{\alpha,\beta}(\hat \mu)<+\infty$ %
 by Theorem \ref{ex min gen}. Therefore the scalar product in \eqref{eq:dual_gap} is finite, see Remark \ref{rem:scalar_product}.  The purpose of $G$ becomes clear in its relationship with the \textit{functional distance} associated to $T_{\alpha,\beta}$, which is defined by
  \begin{equation} \label{def:residual}
  r(\mu):=T_{\alpha,\beta}(\mu) - \min T_{\alpha,\beta} \,,
  \end{equation}
  for all $\mu \in \M$. Such relationship is described in the following lemma. We remark that a similar result is standard in the context of Frank-Wolfe-type algorithms and generalized conditional gradient methods (see e.g. \cite[Lemma 5.5]{K-Pikkarainen} and \cite[Section 2]{jaggi2013}). Due to the specificity of our dynamic problem and for sake of completeness we present it in our setting as well.

  \begin{lem} \label{lem:primal dual}
  Let $\mu \in \M$ be such that $J_{\alpha,\beta}(\mu)<+\infty$. Then 
  \begin{equation} \label{est primal dual}
 r(\mu) \leq  G(\mu )\,. 
  \end{equation}
  Moreover $\mu^*$ solves \eqref{eq:prel_main_prob} if and only if $G(\mu^*)=0$. 
  \end{lem}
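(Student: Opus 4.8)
The plan is to prove the estimate \eqref{est primal dual} first, deduce nonnegativity of $G$ as a free byproduct, and then settle the equivalence by combining the estimate with a first-order optimality condition. Throughout I will use two facts recorded earlier: $(\mathcal{P})$ and $(\tilde{\mathcal{P}})$ share the same minimizers and minimal value, so $\min T_{\alpha,\beta}=\min \tilde T_{\alpha,\beta}$; and every minimizer $\mu^\dagger$ satisfies $J_{\alpha,\beta}(\mu^\dagger)\le M_0$ (because $T_{\alpha,\beta}(\mu^\dagger)\le T_{\alpha,\beta}(0)=M_0$ and $\F\ge 0$), so that $\f(J_{\alpha,\beta}(\mu^\dagger))=J_{\alpha,\beta}(\mu^\dagger)$ by \eqref{def:fi}.

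For the estimate, I would fix $\mu=(\rho,m)$ with $J_{\alpha,\beta}(\mu)<+\infty$, with associated dual variable $w$, and take an arbitrary competitor $\nu=(\rho_\nu,m_\nu)$. I may assume $J_{\alpha,\beta}(\nu)<+\infty$ (otherwise $\tilde T_{\alpha,\beta}(\nu)=+\infty$ and there is nothing to show), so that $\rho,\rho_\nu$ are narrowly continuous disintegrable curves and $\F$ is finite on both. Convexity of the quadratic $a\mapsto\tfrac12\norm{a-f}^2_{L^2_H}$, evaluated at $a=K^*\rho$ and $b=K^*\rho_\nu$, gives
\[
\F(\nu)\ge \F(\mu)+\ps{K^*\rho-f}{K^*\rho_\nu-K^*\rho}_{L^2_H}.
\]
Since here the linearization point is $\mu$ itself, i.e.\ $w=-K(K^*\rho-f)$, the adjoint identity \eqref{eq:equiv_prod_scalare} rewrites the inner product as $\ps{\rho}{w}-\ps{\rho_\nu}{w}$, hence $\F(\nu)\ge \F(\mu)-\ps{\rho_\nu}{w}+\ps{\rho}{w}$. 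Adding $\f(J_{\alpha,\beta}(\nu))$ and using that $\hat\mu$ minimizes \eqref{aux2}, that is $-\ps{\rho_\nu}{w}+\f(J_{\alpha,\beta}(\nu))\ge -\ps{\hat\rho}{w}+\f(J_{\alpha,\beta}(\hat\mu))$, I obtain
\[
\tilde T_{\alpha,\beta}(\nu)\ge \F(\mu)+\ps{\rho-\hat\rho}{w}+\f(J_{\alpha,\beta}(\hat\mu))=T_{\alpha,\beta}(\mu)-G(\mu).
\]
Taking the infimum over $\nu$ yields $\min\tilde T_{\alpha,\beta}\ge T_{\alpha,\beta}(\mu)-G(\mu)$, which is precisely $r(\mu)\le G(\mu)$. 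Because $r\ge 0$ by \eqref{def:residual}, this simultaneously shows $G\ge 0$, confirming that $G$ maps into $[0,+\infty]$.

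For the equivalence, the direction $G(\mu^*)=0\Rightarrow \mu^*$ optimal is immediate: finiteness of $G(\mu^*)$ forces $J_{\alpha,\beta}(\mu^*)<+\infty$, and then $0\le r(\mu^*)\le G(\mu^*)=0$. For the converse I would let $\mu^*$ solve $(\mathcal{P})$, with dual variable $w^*$ and associated minimizer $\hat\mu$ of \eqref{aux2}, and argue that $\mu^*$ itself minimizes \eqref{aux2}. Granting this, the linearized objective $\f(J_{\alpha,\beta}(\cdot))-\ps{\cdot}{w^*}$ attains the same minimal value at $\mu^*$ and at $\hat\mu$; since $J_{\alpha,\beta}(\mu^*)=\f(J_{\alpha,\beta}(\mu^*))$, plugging into \eqref{eq:dual_gap} gives $G(\mu^*)=\bigl(\f(J_{\alpha,\beta}(\mu^*))-\ps{\rho^*}{w^*}\bigr)-\bigl(\f(J_{\alpha,\beta}(\hat\mu))-\ps{\hat\rho}{w^*}\bigr)=0$.

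The main obstacle is the remaining claim that optimality for the convex functional $\tilde T_{\alpha,\beta}$ implies optimality for its linearization \eqref{aux2}. I would prove it by a variational inequality: for any $\nu$ with $J_{\alpha,\beta}(\nu)<+\infty$, set $\mu_s:=(1-s)\mu^*+s\nu$, which is admissible by convexity of $J_{\alpha,\beta}$, and use $\bigl(\tilde T_{\alpha,\beta}(\mu_s)-\tilde T_{\alpha,\beta}(\mu^*)\bigr)/s\ge 0$. The smooth quadratic term contributes the exact Gâteaux derivative $\bigl(\F(\mu_s)-\F(\mu^*)\bigr)/s\to -\ps{\rho_\nu-\rho^*}{w^*}$ (again via \eqref{eq:equiv_prod_scalare}), while convexity of $\f\circ J_{\alpha,\beta}$ makes the difference quotient of $\f(J_{\alpha,\beta}(\cdot))$ monotone in $s$, so its limit is bounded above by $\f(J_{\alpha,\beta}(\nu))-\f(J_{\alpha,\beta}(\mu^*))$. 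Letting $s\to 0^+$ gives $\ps{\rho_\nu-\rho^*}{w^*}\le \f(J_{\alpha,\beta}(\nu))-\f(J_{\alpha,\beta}(\mu^*))$, i.e.\ $\mu^*$ minimizes \eqref{aux2}. The delicate points are the exact derivative of $\F$ along the segment and the limit interchange with the convex difference quotient, but these are routine once one restricts to the finite-energy cone where the relevant $\rho$, $\rho_\nu$ are narrowly continuous (Remark \ref{rem:scalar_product}, Lemma \ref{lem:prop B}); this first-order optimality equivalence is the crux of the argument.
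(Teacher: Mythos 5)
Your proof is correct and follows essentially the same route as the paper's: a first-order lower bound on the fidelity term via the dual variable combined with optimality of $\hat\mu$ in \eqref{aux2} for the estimate, and a variational inequality along the segment $\mu^s=\mu^*+s(\nu-\mu^*)$ to show that a minimizer of \eqref{eq:prel_main_prob} also minimizes \eqref{aux2}. The only differences are cosmetic — you use the convexity inequality for the quadratic and take the infimum over all competitors, where the paper uses the exact polarization identity \eqref{eq:bilinear2}, drops the nonnegative cross term, and plugs in the minimizer $\mu^*$ directly.
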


  \begin{proof} 
Let $w$ and $\hat \rho$ be as in \eqref{eq:dual_gap}.  
By using the Hilbert structure of $\Ltwo$, for any $\tilde{\mu}$ such that $J_{\alpha,\beta}(\tilde \mu)<+\infty$, we have, by the polarization identity, %
  \begin{equation}\label{eq:bilinear2}
  -\langle \rho - \tilde\rho, w\rangle = \frac{\|K^* \rho - f\|^2_{L^2_H}}{2} - \frac{\|K^* \tilde\rho - f\|^2_{L^2_H}}{2} + \frac{\|K^*(\tilde\rho - \rho)\|^2_{L^2_H}}{2}\,.
  \end{equation}
	Let $\mu^*$ be a minimizer for $T_{\alpha,\beta}$, which exists by Theorem \ref{thm:existence}. Since $J_{\alpha,\beta}(\mu^*) \leq T_{\alpha,\beta}(\mu^*) \leq T_{\alpha,\beta}(0)=M_0$, by definition of $\f$ we have $\varphi(J_{\alpha,\beta}(\mu^*)) = J_{\alpha,\beta}(\mu^*)$. Using \eqref{eq:bilinear2} and the definition of $G$ in \eqref{eq:dual_gap}, where $\hat  \mu$ is chosen to be a solution to \eqref{aux2}, we obtain
  \begin{align*}
  G(\mu) %
  & \geq -\langle \rho - \rho^*, w\rangle + J_{\alpha,\beta}(\mu) -  J_{\alpha,\beta}(\mu^*)\\
  & \geq \frac{\|K^* \rho - f\|^2_{L^2_H}}{2} - \frac{\|K^* \rho^*  - f\|^2_{L^2_H}}{2}+ J_{\alpha,\beta}(\mu) -  J_{\alpha,\beta}(\mu^*)\\
  & = T_{\alpha,\beta}(\mu) - T_{\alpha,\beta}(\mu^*) = r(\mu)\,,
  \end{align*}
  proving \eqref{est primal dual}. If $G(\mu^*)=0$ then $\mu^*$ minimizes in \eqref{eq:prel_main_prob} by \eqref{est primal dual}. Conversely, assume that $\mu^*$ is a solution of \eqref{eq:prel_main_prob} and denote by $w^*_t:=-K_t(K_t^*\rho_t^* - f_t)$ the associated dual variable. Let $\mu$ be arbitrary and such that $J_{\alpha,\beta}(\mu)<+\infty$. Let $s \in [0,1]$ and set $\mu^s:=\mu^*+s(\mu-\mu^*)$. By convexity of $J_{\alpha,\beta}$ (see Lemma \ref{lem:prop J}) we have that $J_{\alpha,\beta}(\mu^s)<+\infty$. As $\mu^*$ is optimal in \eqref{eq:prel_main_prob} we infer
  \[
  \begin{aligned}
  0 & \leq T_{\alpha,\beta}(\mu^s) - T_{\alpha,\beta}(\mu^*) \leq  \frac{ \norm{K^*\rho^s - f}^2_{L^2_H}}{2} - \frac{ \norm{K^*\rho^* - f}^2_{L^2_H}}{2} +  s(J_{\alpha,\beta}(\mu)-J_{\alpha,\beta}(\mu^*)) \\
  & = - s \ps{\rho-\rho^*}{w^*} + s^2 \,  \frac{\norm{K^*(\rho^* - \rho)}^2_{L^2_H}}{2} +   s(J_{\alpha,\beta}(\mu)-J_{\alpha,\beta}(\mu^*))\,,
  \end{aligned}
  \]
  where we used convexity of $J_{\alpha,\beta}$, and the identity at \eqref{eq:bilinear2} with respect to $w^*, \rho^*$ and $\rho^s$. Dividing the above inequality by $s$ and letting $s \to 0$ yields
  \begin{equation}\label{eq:bilinear3}
 0 \leq  -\ps{\rho-\rho^* }{w^*} +J_{\alpha,\beta}(\mu)  - J_{\alpha,\beta}(\mu^*)  \,,
  \end{equation}
  which holds for all $\mu$ with $ J_{\alpha,\beta}(\mu)<+\infty$. Now notice that $J_{\alpha,\beta}(\mu^*) \leq T_{\alpha,\beta}(\mu^*) \leq M_0$, since $\mu^*$ solves \eqref{eq:prel_main_prob}. Therefore $\f(J_{\alpha,\beta}(\mu^*))=J_{\alpha,\beta}(\mu^*)$. Moreover $t \leq \f(t)$ for all $t \geq 0$. As a consequence of \eqref{eq:bilinear3} we then infer 
  \[
  \begin{aligned}
  -\ps{\rho^*}{w^*}+\f(J_{\alpha,\beta}(\mu^*)) &=
    -\ps{\rho^*}{w^*}+J_{\alpha,\beta}(\mu^*) \\
    & \leq 
      -\ps{\rho}{w^*}+J_{\alpha,\beta}(\mu) 
       \leq   -\ps{\rho}{w^*}+\f(J_{\alpha,\beta}(\mu))\,,
       \end{aligned}
  \] 
  proving that $\mu^*$ minimizes in \eqref{aux2} with respect to $w^*$. Therefore, by definition, $G(\mu^*)=0$.
  \end{proof}

\section{The algorithm: theoretical analysis} \label{sec:algorithm}

In this section we give a theoretical description of the dynamic generalized conditional gradient algorithm anticipated in the introduction, which we call \emph{core algorithm}. %
The proposed algorithm aims at finding minimizers to $T_{\alpha,\beta}$ as defined in \eqref{eq:prel_main_prob}, 
for some fixed data $f \in \Ltwo$ and parameters $\alpha,\beta>0$. 
It is comprised of an \emph{insertion step}, where one seeks a minimizer to \eqref{eq:aux4} among the extremal points of the set $C_{\alpha,\beta}:=\{J_{\alpha,\beta}\leq 1\}$, and of a \emph{coefficients optimization step}, which will yield a finite dimensional quadratic program. As a result, each iterate will be a finite linear combination, with non-negative coefficients, of points in $\ext(C_{\alpha,\beta})$. 
We remind the reader that $\ext(C_{\alpha,\beta})= \{0\} \cup \points$ in view of Theorem \ref{thm:extremal}, where $\points$ is defined at \eqref{ext_points}. From the definition of $\points$, we see that,
except for the zero element, the extremal points are in 1-on-1 correspondence with the space of curves $\AC^2:=\AC^2([0,1]; \overline \Omega)$. This observation
motivates us to define the 
atoms of our problem.

\begin{definition}[Atoms] \label{def:atom} We denote by $\ACe^2$ the one-point
         extension of the set $\AC^2$, where we include a point
        denoted by $\gamma_\infty$.
  For any $\gamma \in \AC^2$ we name as \textit{atom} the respective extremal point
  $\mu_\gamma=(\rho_\gamma, m_\gamma) \in \M$ defined according to \eqref{ext_meas}. 
    For $\gamma_{\infty}$ the corresponding atom is defined by $\mu_{\gamma_\infty}:=(0,0)$. %
    We call \textit{sparse} any measure $\mu  \in \M$ such that  
 \begin{equation}\label{eq:sparse_meas}
 \mu=\sum_{j=1}^N c_j \mu_{\gamma_j} %
    \end{equation} 
    for some $N \in \N$, $c_j > 0$ and $\gamma_j \in \AC^2$, with $\gamma_i \neq \gamma_j$ for $i \neq j$. 
\end{definition}
Note that $\gamma_\infty$ can be regarded as the infinite length curve: indeed
if $\{\gamma^n\}$ in $\AC^2$ is a sequence
of curves with diverging length, that is $\int_0^1 |\dot \gamma^n| \,dt \to \infty$ as $n \to \infty$, then
$\rho_{\gamma^n} \weakstar \rho_{\gamma_\infty}$, since $a_{\gamma^n} \to
0$ by H\"older's inequality.  
Additionally, it is convenient to introduce the following map associating to vectors of curves and coefficients the corresponding sparse measure: 
\begin{equation} \label{eq:map}
(\boldsymbol{\gamma}, \boldsymbol{c}) \mapsto \mu:= \sum_{j=1}^N c_j \mu_{\gamma_j} \,,
\end{equation}
where $N \in \N$ is fixed and $\boldsymbol{c}:= (c_1, \ldots, c_{N})$ with $c_j > 0$, $\boldsymbol{\gamma} := (\gamma_1, \ldots, \gamma_{N})$, with $\gamma_j \in \AC^2$.

\begin{rem} \label{rem:crossing}
The decomposition in extremal points of a given sparse measure might not be unique, that is, the map at \eqref{eq:map} is not injective. For example, let $N:=2$, $\Om:=(0,1)^2$ and 
\[
\begin{aligned}
& \gamma_1(t):=(t,t) \,, \,\,\,\,\quad & \tilde{\gamma}_1 (t):= (t,t) \rchi_{[0,1/2)}(t)+ (t,1-t) \rchi_{[1/2,1]}(t)\,, \\
&\gamma_2(t):=(t,1-t) \,, &\tilde{\gamma}_2 (t):= (t,1-t) \rchi_{[0,1/2)}(t)+ (t,t) \rchi_{[1/2,1]}(t)\,.
\end{aligned}
\]
Note that injectivity fails for $((\gamma_1,\gamma_2),(1,1))$ and $((\tilde{\gamma}_1,\tilde{\gamma}_2),(1,1))$, given that they map to the same measure $\mu^*$, but $\gamma_1$ and $\gamma_2$ cross at $t=1/2$, while $\tilde{\gamma}_1$ and $\tilde{\gamma}_2$ rebound. This observation is relevant for the algorithms presented, seeing that they operate in terms of extremal points: if for example $\mu^*$ was the unique solution to \eqref{eq:prel_main_prob} for some data $f$, due to the lack of unique sparse representation for $\mu^*$,  the numerical reconstruction could favor the representation having the least energy in terms of the regularizer $J_{\alpha,\beta}$. This is not surprising, since our method aims at reconstructing sparse measures, rather than their extremal points. %
A numerical example displaying the behavior of our algorithm on crossings, such as the case of $\mu^*$, is given in Section \ref{subsec:Ex3}. 
\end{rem}

The rest of the section is organized as follows. In Section \ref{subsec:alg_descr} we present the core algorithm, describing its basic steps and summarizing it in Algorithm \ref{alg:core}. In Section \ref{subsec:quadratic} we discuss the equivalence of the coefficients optimization step to a quadratic program, while in Section \ref{sec:convergence} we show sublinear convergence of Algorithm \ref{alg:core} in terms of the residual defined at \eqref{def:residual}. In  Section \ref{subsec:stop} we detail on a theoretical stopping criterion for our algorithm.  To conclude, in Section \ref{sec:time-discrete}, we give a description of how to alter Algorithm \ref{alg:core} in case the fidelity term $\mathcal{F}$ at \eqref{eq:prel_main_prob} is replaced by a time-discrete version. All the results presented in this section and in the above will hold also for this particular case, with minor modifications.

\subsection{Core Algorithm} \label{subsec:alg_descr}

The core algorithm consists of two steps. In the first one, named the
\emph{insertion step}, an atom is added to the current iterate, this atom being the minimizer of the linearized problem defined at \eqref{eq:aux4}. In the second step, named the
\emph{coefficients optimization step}, the atoms are fixed and their associated
weights are optimized to minimize the target functional $T_{\alpha,\beta}$
defined in \eqref{eq:prel_main_prob}. %
In what follows $f \in \Ltwo$ is a given datum and $\alpha,\beta>0$ are fixed parameters. %

\subsubsection{Iterates} \label{subsec:iterates}
We initialize the algorithm to the zero atom $\mu^0 := 0$. The $n$-th iteration
 $\mu^n$ is a \emph{sparse} element of $\mathcal{M}$ according to \eqref{eq:sparse_meas}, that is, 
  \begin{equation}\label{eq:alg_iterates}
    \mu^n = \sum_{j=1}^{N_n} c_j^n \mu_{\gamma_j^n} ,
  \end{equation}
  where $N_n \in \N \cup\{0\}$, $\gamma_j^n \in \AC^2$, $c_j^n >0$ and $\gamma_i \neq \gamma_j$ if $i \neq j$. Notice that $N_n$ is counting the number of atoms present at the $n$-th iteration, and is not necessarily equal to $n$,
  since the optimization step could discard atoms by setting their associated
  weights to zero. In practice, Algorithm \ref{alg:core} operates in terms of curves and weights. That is, the $n$-th iteration outputs pairs $(\gamma_j,c_j)$ with $\gamma_j \in \AC^2$, $c_j >0$: the iterate at \eqref{eq:alg_iterates} can be then constructed via the map \eqref{eq:map}.

\subsubsection{Insertion step}
\label{subsec:insertion}
Assume $\mu^n$ is the current iterate. 
Define the dual variable associated to $\mu^n$ as in \eqref{eq:dual_variable}, that is,
\begin{equation} \label{eq:alg_dual_var}
  w^n_t := -K_t (K_t^* \rho^n_t - f_t  ) \in C(\olom) \quad \text{for a.e.}\  t\in (0,1)\,.
\end{equation}
With it, consider the minimization problem of the form \eqref{prel:aux4}, that is,
\begin{equation} \label{eq:alg_insertion}
  \min_{\mu \in \ext(C_{\alpha,\beta})} -\ps{\rho}{w^n}\,,
\end{equation}
where the term $\ps{\cdot}{\cdot}$ is defined in \eqref{eq:scalarproduct_alg}. We recall that \eqref{eq:alg_insertion} admits solution by Proposition \ref{prop:wolfe equiv}. Thanks to the characterization $\ext(C_{\alpha,\beta})=\{0\}\cup \points$ provided by Theorem \ref{thm:extremal}, problem \eqref{eq:alg_insertion} can be cast on the space $\ACe^2$. Indeed, given $\mu \in \points$, following the notations at \eqref{ext_points}-\eqref{ext_meas}, we have that $\mu=\mu_\gamma$ for some $\gamma \in \AC^2$. The curve $t \mapsto \delta_{\gamma(t)}$ belongs to $\pcurves$, and hence $\ps{\rho_\gamma}{w}$ is finite (see Remark \ref{rem:scalar_product}). Thus, by definition, we have
\begin{equation} \label{eq:diff_computation:11}
  \ps{\rho_\gamma}{w^n}= a_\gamma \int_0^1 \ps{\delta_{\gamma(t)}}{w_t^n}_{\M(\olom),C(\olom)} \, dt = a_\gamma \int_0^1 w_t^n(\gamma(t)) \,dt \,,
\end{equation}
showing that \eqref{eq:alg_insertion} is equivalent to
\begin{equation}
  \label{eq:hard_min}
  \min_{\gamma \in \ACe^2}
  -\left< \rho_\gamma ,w^n \right>  %
=  \min_{\gamma \in \AC^2} \left\{0, 
 -a_{\gamma} \int_0^1 w_t^n(\gamma(t))\,dt \right\}\,.
\end{equation}
The insertion step consists 
in finding a curve $\gamma^* \in \ACe^2$ 
that solves \eqref{eq:hard_min}. To such curve, we associate a new atom $\mu_{\gamma^*}$ via Definition \ref{def:atom}. Note that $\gamma^*$ depends on the current iterate $\mu^n$, as well as on the datum $f$ and parameters $\alpha,\beta$: however, in order to simplify notations, we omit such dependencies. After, we have a stopping condition:
\begin{itemize}
  \item if $\left<\rho_{\gamma^*}, w^n\right> \leq 1$, 
     then $\mu^n$ is solution to \eqref{eq:prel_main_prob}. The algorithm outputs $\mu^n$ and stops,  
  \item if $\left< \rho_{\gamma^*}, w^n\right> > 1$, then $\mu^n$ is not a solution to \eqref{eq:prel_main_prob} and $\gamma^* \in \AC^2$. The found atom $\mu_{\gamma^*}$ is inserted in the $n$-th iterate $\mu^n$ and the algorithm continues.
\end{itemize}
The optimality statements in the above stopping condition
correspond to positivity conditions on the subgradient of $T_{\alpha,\beta}$
and are rigorously proven in Section \ref{subsec:stop} below. Moreover, the
mentioned stopping condition can be made quantitative as discussed in Remark
\ref{rem:relax_stop} below.

 \begin{rem} \label{rem:non-linear}
    In this section we will always assume the availability of an exact solution $\gamma^*$ to \eqref{eq:hard_min}. In particular, this allows to obtain a sublinear convergence rate for the core algorithm (Theorem~\ref{thm:convergence} below), and make the stopping condition rigorous. In practice, however, obtaining $\gamma^*$ is not always possible, due to the non-linearity and non-locality of the functional at \eqref{eq:hard_min}. For this reason, in Section \ref{sec:insstepheu}, we propose a strategy aimed at obtaining stationary points of \eqref{eq:hard_min}. Based on such strategy, a relaxed version of the insertion step is proposed for Algorithm \ref{alg:full}, which we employ for the numerical simulations of Section \ref{sec:numerics}.  
\end{rem}

\subsubsection{Coefficients optimization step} \label{subsec:optimization}

This step is realized after the stopping condition is checked, with the condition  $\left< \rho_{\gamma^*}, w^n\right> > 1$ being satisfied. In particular, as observed above, in this case  $\gamma^* \in \AC^2$. We then set  $\gamma_{N_n+1}^n := \gamma^*$ and consider the 
coefficients optimization problem %
\begin{equation}
  \min_{ (c_1,c_2,\ldots,c_{N_n+1}) \in
  \R_{+}^{N_n+1}} T_{\alpha,\beta} \left(
  \sum_{j=1}^{N_n+1} c_j \mu_{\gamma_j^{n}} \right)\,,
  \label{eq:min_coeff}
\end{equation}
where $\mu_{\gamma_j^n}$ for $j=1,\ldots,N_n$ are the atoms present in the $n$-th iterate $\mu^n$. 
If $c \in \R_{+}^{N_n+1}$ is a solution to the above problem, the next iterate is defined by
\begin{equation} \label{def:new_iterate}
  \mu^{n+1} := \sum_{\substack{c_j > 0 }} c_j \mu_{\gamma_j^n}\,,
\end{equation}
thus discarding the curves that do not contribute to \eqref{eq:min_coeff}.

\begin{rem} \label{rem:quadratic_optimization}
  Problem \eqref{eq:min_coeff}
  is equivalent to a quadratic program of the form
  \begin{equation} \label{eq:alg_quadratic_program}
    \min_{c= (c_1,\ldots,c_{N_n+1}) \in \R^{N_n+1}_+} \,\frac{1}{2}\, c^T \Gamma 
    c + b^T c \,,
  \end{equation}
  where $\Gamma \in \R^{(N_n+1) \times (N_n+1)}$ is a positive-semidefinite and symmetric matrix and $b \in \R^{N_n+1}$, as proven in Proposition \ref{prop:quadraticoptimization} below.
  Therefore, throughout the paper, we will always assume the availability of an exact solution to  \eqref{eq:min_coeff}. In practice we solved \eqref{eq:min_coeff} by means of the free Python software package \textit{CVXOPT} \cite{CVXOPT2,CVXOPT1}. %
\end{rem}

\subsubsection{Algorithm summary} \label{subsec:alg_summary}
As discussed in Section \ref{subsec:iterates}, the iterates of Algorithm \ref{alg:core} are pairs of curves and weights $(\gamma_j,c_j)$ for $\gamma \in \AC^2$, $c_j>0$ and $j=1, \ldots, N_n$. In the pseudo-code we denote such iterates with the tuples $\boldsymbol{\gamma} = (\gamma_1, \ldots, \gamma_{N_n})$ and $\boldsymbol{c} = (c_1, \ldots, c_{N_n})$. Note that such tuples vary in size at each iteration, and the initial iterate of the algorithm, that is the zero atom, corresponds to the empty tuples $\boldsymbol{\gamma} = ()$ and $\boldsymbol{c} = ()$. We denote the number of elements contained in a tuple $\boldsymbol{\gamma}$ with the symbol $|\boldsymbol{\gamma}|$. Via the map \eqref{eq:map}, the iterates $(\boldsymbol{\gamma},\boldsymbol{c})$ define a sequence of sparse measures $\{\mu^n\}$ of the form \eqref{eq:alg_iterates}. 
The generated sequence $\{\mu^n\}$   weakly* converges (up to subsequences) to a minimizer of $T_{\alpha,\beta}$ for the datum $f \in \Ltwo$, as shown in Theorem \ref{thm:convergence} below. Notice that the assignments at lines 4 and 9 are meant to choose one element in the respective argmin set.  
The function $\texttt{delete\_zero\_weighted}( \boldsymbol{\gamma}, \boldsymbol{c}) $ at line $11$ in Algorithm \ref{alg:core} is designed to input a tuple 
$(\boldsymbol{\gamma}, \boldsymbol{c})$ 
and output another tuple where the curves $\gamma_j$ and corresponding weights $c_j$ are deleted if $c_j = 0$.

\SetKwInput{KwInput}{Input}
\SetKwInput{KwOutput}{Output}
\newcommand\mycommfont[1]{\footnotesize\ttfamily\textcolor{blue}{#1}}
\SetCommentSty{mycommfont}
\begin{algorithm}[h!]
  \caption{Core algorithm} \label{alg:core}
\DontPrintSemicolon
\KwInput{
Data $f \in \Ltwo
    $, 
      parameters $\alpha,\beta > 0$,
forward operators $K^*_t: \mathcal{M}(\overline \Omega) \to H_t$ \smallskip \smallskip
}
$\boldsymbol{c} \leftarrow ()$, \,\,
$\boldsymbol{\gamma} \leftarrow ()$ \\
\For{$n=0,1,\ldots$} 
{  $N_n \leftarrow |\boldsymbol{\gamma}|$, \,\, $\mu^n \leftarrow \sum_{j=1}^{N_n} c_j \mu_{\gamma_j}$,\,\,
  $w_t^n \leftarrow  -K_t (K_t^* \rho^n_t - f_t)$ \\ 
   \tcc{Insertion step}
  $\gamma^* \leftarrow \argmin_{\gamma \in \ACe^2} \ - \left< \rho_{\gamma} ,w^n\right>$ \\
   \tcc{Stopping condition}
  \If{$\left<\rho_{\gamma^*}, w^n \right> \leq 1 $}
  {
    \Return $\mu^n$
  }
  \Else
  {
 $\boldsymbol{\gamma} \leftarrow (\boldsymbol{\gamma}, \gamma^*)$\\
  \tcc{Coefficients optimization step}
  $\boldsymbol{c} \leftarrow
    \argmin_{(c_1,\ldots,c_{N_n + 1})\in \R_{+}^{N_n + 1}} T_{\alpha,\beta} \left(
  \sum_{j=1}^{N_n}c_j \mu_{\boldsymbol{\gamma}_j}+ c_{N_n + 1}\mu_{\gamma^*} \right) $\\ 
  $N_{n+1}  \leftarrow \# \{j \, \colon \, c_j > 0\}$\\
 $( \boldsymbol{\gamma}, \boldsymbol{c}) \leftarrow \texttt{delete\_zero\_weighted}( \boldsymbol{\gamma}, \boldsymbol{c})$\\
$\mu^{n+1} \leftarrow    \sum_{j=1}^{N_{n+1}} c_j \mu_{\gamma_j}$ 
  }
}
\end{algorithm}

 \subsection{Quadratic optimization} \label{subsec:quadratic}

We prove the statement in Remark \ref{rem:quadratic_optimization}. To be more precise, assume \ref{ass:H1}-\ref{ass:H3}, \ref{ass:K1}-\ref{ass:K3} from Section \ref{sec:assumptions} and let $f \in \Ltwo$, $\alpha,\beta >0$ be given. Fix $N \in \N$, $\gamma_1,\ldots,\gamma_N \in \AC^2$ with $\gamma_i \neq \gamma_j$ for all $i\neq j$, and consider the coefficients optimization problem
\begin{equation} \label{eq:subsec:quadratic}
\min_{c= (c_1,\ldots,c_N)\in \R^N_+} \, T_{\alpha,\beta} \left( \sum_{j=1}^N c_j \mu_{\gamma_j} \right)\,,
\end{equation}
where $\mu_{\gamma_j}$ is defined according to \eqref{ext_meas}. For \eqref{eq:subsec:quadratic} the following holds. 

  \begin{prop}\label{prop:quadraticoptimization}
  Problem \eqref{eq:subsec:quadratic} is equivalent to 
  \begin{equation} \label{eq:subsec:quadratic:2}
\min_{c=(c_1,\ldots,c_N) \in \R^N_+}\, \frac12 c^T \Gamma c + b^T c\,,
\end{equation}
where $\Gamma=(\Gamma_{i,j}) \in \R^{N \times N}$ is a positive semi-definite symmetric matrix and $b=(b_i) \in \R^N$, with 
 \begin{equation} \label{eq:subsec:quadratic:3}
\Gamma_{i,j}:=a_{\gamma_i} a_{\gamma_j} \int_0^1 \ps{K_t^*\delta_{\gamma_i(t)}}{K_t^*\delta_{\gamma_j(t)}}_{H_t}\, dt \,, \quad b_i:= 1- a_{\gamma_i} \int_0^1 \ps{K_t^*\delta_{\gamma_i}(t)}{f_t}_{H_t}\, dt \,.
\end{equation}
  \end{prop}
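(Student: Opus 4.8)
The plan is to expand the Tikhonov functional $T_{\alpha,\beta}$ evaluated at the sparse measure $\mu = \sum_{j=1}^N c_j \mu_{\gamma_j}$ and recognize that, as a function of the coefficient vector $c \in \R^N_+$, it splits into a quadratic piece coming from the fidelity term and a linear piece coming from the regularizer. First I would handle the regularizer: by Lemma~\ref{lem:additivity}, since all $c_j > 0$, we have the exact identity $J_{\alpha,\beta}\bigl(\sum_{j} c_j \mu_{\gamma_j}\bigr) = \sum_{j} c_j$, which is linear in $c$ and will contribute to the vector $b$. This is the step where the additivity lemma does essential work, turning what could have been a complicated nonlinear regularization term into a simple linear one on the positive cone.

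Next I would expand the fidelity term $\F(\mu)$. Writing $\rho = \sum_j c_j \rho_{\gamma_j}$ with $\rho_{\gamma_j} = a_{\gamma_j}\, dt \otimes \delta_{\gamma_j(t)}$, linearity of $K_t^*$ gives the disintegration $K_t^* \rho_t = \sum_j c_j a_{\gamma_j} K_t^* \delta_{\gamma_j(t)}$ for a.e.\ $t$. Substituting into $\F(\mu) = \tfrac12 \int_0^1 \|K_t^* \rho_t - f_t\|_{H_t}^2 \, dt$ and expanding the square via the bilinearity of the inner product on $H_t$, I would obtain three groups of terms: a double sum $\tfrac12 \sum_{i,j} c_i c_j \, a_{\gamma_i} a_{\gamma_j} \int_0^1 \ps{K_t^* \delta_{\gamma_i(t)}}{K_t^* \delta_{\gamma_j(t)}}_{H_t}\,dt$, a cross term $-\sum_j c_j\, a_{\gamma_j} \int_0^1 \ps{K_t^* \delta_{\gamma_j(t)}}{f_t}_{H_t}\,dt$, and the constant $\tfrac12 \int_0^1 \|f_t\|_{H_t}^2\,dt$. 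The first group is precisely $\tfrac12 c^T \Gamma c$ with $\Gamma$ as in \eqref{eq:subsec:quadratic:3}, and combining the cross term with the regularizer contribution $\sum_j c_j$ yields exactly $b^T c$ with $b_i$ as stated. The additive constant does not affect the minimizer and may be dropped, establishing the equivalence with \eqref{eq:subsec:quadratic:2}.

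To justify these manipulations rigorously I would verify well-definedness of the integrals. Since each $(t \mapsto \delta_{\gamma_j(t)}) \in \pcurves$, Lemma~\ref{lem:prop K} guarantees that $(t \mapsto K_t^* \delta_{\gamma_j(t)}) \in \Ltwo$, so all the inner product integrals in \eqref{eq:subsec:quadratic:3} are finite, and the exchange of the finite sum with the integral is elementary. Symmetry of $\Gamma$ is immediate from symmetry of the $H_t$ inner products. For positive semidefiniteness I would note that for any $v = (v_1,\ldots,v_N) \in \R^N$, setting $g_t := \sum_j v_j a_{\gamma_j} K_t^* \delta_{\gamma_j(t)} \in H_t$, one has $v^T \Gamma v = \int_0^1 \|g_t\|_{H_t}^2\,dt \geq 0$; equivalently $v^T \Gamma v = \|K^*(\sum_j v_j \rho_{\gamma_j})\|_{\Ltwo}^2 \geq 0$, which is the cleanest way to phrase it using the Hilbert structure of $\Ltwo$ recalled in Remark~\ref{rem:scalar_product}.

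I do not anticipate a serious obstacle here: the proof is essentially bookkeeping, and the only subtlety is ensuring measurability and integrability so that the expansion of $\F$ is legitimate, which is already handled by the preliminary lemmas (in particular Lemma~\ref{lem:prop K} for membership in $\Ltwo$). The one point requiring mild care is that the constant $M_0$ appearing when expanding $\tfrac12\|K_t^*\rho_t - f_t\|^2$ must be discarded to match \eqref{eq:subsec:quadratic:2} exactly, but since it is independent of $c$ this does not alter the argmin, so \eqref{eq:subsec:quadratic} and \eqref{eq:subsec:quadratic:2} share the same minimizers.
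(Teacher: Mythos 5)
Your proposal is correct and follows essentially the same route as the paper: well-definedness via Lemma~\ref{lem:prop K}, linearity of the regularizer via Lemma~\ref{lem:additivity}, expansion of the fidelity term in the Hilbert space $\Ltwo$ yielding $\frac12 c^T\Gamma c + b^T c + M_0$, and positive semidefiniteness of $\Gamma$ from its Gramian structure. No gaps to report.
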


  \begin{proof}
As $\gamma_j$ is continuous, the curve $t \mapsto \delta_{\gamma_j(t)}$ belongs to $\pcurves$. Hence the map $t \mapsto K_t^* \delta_{\gamma_j(t)}$ belongs to $\Ltwo$ by Lemma \ref{lem:prop K} and the quantities at \eqref{eq:subsec:quadratic:3} are well-defined. 
  Thanks to definition of $T_{\alpha,\beta}$ and Lemma \ref{lem:additivity} we immediately see that 
\[
   \begin{aligned}
 T_{\alpha,\beta} \left( \sum_{j=1}^N c_j \mu_{\gamma_j} \right) &  = 
   \frac12 \sum_{i,j=1}^N c_i c_j   \ps{K^* \rho_{\gamma_i}}{K^* \rho_{\gamma_j}}_{L^2_H}  \\ 
 & \qquad \qquad \qquad  - \sum_{j=1}^N c_j  
   \ps{K^* \rho_{\gamma_j}}{f}_{L^2_H} + \frac{1}{2} \norm{f}_{L^2_H}^2 + \sum_{j=1}^N c_j J_{\alpha,\beta}(\mu_{\gamma_j}) \\
  & = \frac12 c^T \Gamma c + b^T c + M_0\,,  
    \end{aligned}
\]
   where $M_0\geq 0$ is defined at \eqref{def:M0}. This shows that \eqref{eq:subsec:quadratic} and \eqref{eq:subsec:quadratic:2} are equivalent. The rest of the statement follows since $\Gamma$ is the Gramian with respect to the vectors $K^*\rho_{\gamma_1}, \ldots, K^*\rho_{\gamma_N}$ in $L^2_H$.  
  \end{proof}

 \subsection{Convergence analysis}
  \label{sec:convergence}
We prove sublinear convergence for Algorithm \ref{alg:core}. The convergence rate is given in terms of the functional distance \eqref{def:residual} associated to $T_{\alpha,\beta}$. %
Throughout the section we assume that $f \in \Ltwo$ is a given datum, $\alpha, \beta >0$ are fixed regularization parameters and \ref{ass:H1}-\ref{ass:H3}, \ref{ass:K1}-\ref{ass:K3} as in Section \ref{sec:assumptions} hold.  
  The convergence result states as follows.

  \begin{thm} \label{thm:convergence}
    Let $\{\mu^n\}$ be a sequence generated by Algorithm \ref{alg:core}. Then $\{T_{\alpha,\beta}(\mu^n)\}$ is non-increasing and the residual at \eqref{def:residual} satisfies
    \begin{equation}\label{eq:decaytheorem}
    r(\mu^n) \leq \frac{C}{n} \,, \,\, \text{ for all } \,\, n \in \N\,,
    \end{equation}
    where $C>0$ is a constant depending only on
    $\alpha,\beta$, $f$ and $K_t^*$. Moreover each
    accumulation point $\mu^*$ of $\mu^n$ with respect to the weak* topology of
    $\M$ is a minimizer for $T_{\alpha,\beta}$. If $T_{\alpha,\beta}$ admits a
    unique minimizer $\mu^*$, then $\mu^n \weakstar \mu^*$ along the whole
    sequence.
  \end{thm}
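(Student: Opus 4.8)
The plan is to run the classical Frank--Wolfe/GCG convergence scheme adapted to measures, with the primal-dual gap $G$ of Definition~\ref{def:pd_gap} playing the role of the Frank--Wolfe gap. First I would record monotonicity: at iteration $n$ the coefficients optimization step minimizes $T_{\alpha,\beta}$ over all conic combinations of $\mu_{\gamma_1^n},\dots,\mu_{\gamma_{N_n}^n},\mu_{\gamma^*}$, and the coefficients $(c_1^n,\dots,c_{N_n}^n,0)$ reproduce $\mu^n$, so $T_{\alpha,\beta}(\mu^{n+1})\le T_{\alpha,\beta}(\mu^n)$. Since $\mu^0=0$ gives $T_{\alpha,\beta}(\mu^0)=M_0$, this yields $J_{\alpha,\beta}(\mu^n)\le T_{\alpha,\beta}(\mu^n)\le M_0$ for all $n$; hence $\varphi(J_{\alpha,\beta}(\mu^n))=J_{\alpha,\beta}(\mu^n)$ and $\tilde T_{\alpha,\beta}(\mu^n)=T_{\alpha,\beta}(\mu^n)$ throughout. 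In particular each $\rho^n_t$ has constant-in-time mass bounded by $M_0/\alpha$, and the iterates lie in a fixed weak* compact sublevel set of $J_{\alpha,\beta}$ (Lemma~\ref{lem:prop J}).

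The core is the descent estimate. Writing $\hat\mu^n=M_n\mu_{\gamma^*}$ for the solution of \eqref{aux2} supplied by Proposition~\ref{prop:wolfe equiv}, the segment $\mu^s:=(1-s)\mu^n+s\hat\mu^n$ is a conic combination of the available atoms for every $s\in[0,1]$, so $T_{\alpha,\beta}(\mu^{n+1})\le T_{\alpha,\beta}(\mu^s)\le\tilde T_{\alpha,\beta}(\mu^s)$. Expanding the quadratic fidelity along $\mu^s$, identifying the linear term through \eqref{eq:equiv_prod_scalare}, and bounding $\varphi(J_{\alpha,\beta}(\mu^s))$ by convexity of $\varphi$ and $J_{\alpha,\beta}$, the coefficient of $s$ becomes exactly $-G(\mu^n)$ (here one uses $\varphi(J_{\alpha,\beta}(\mu^n))=J_{\alpha,\beta}(\mu^n)$), giving
\begin{equation*}
T_{\alpha,\beta}(\mu^{n+1})-T_{\alpha,\beta}(\mu^n)\le -s\,G(\mu^n)+\tfrac{s^2}{2}\,\|K^*(\hat\rho^n-\rho^n)\|^2_{L^2_H}\,,\qquad s\in[0,1]\,.
\end{equation*}
The curvature $\|K^*(\hat\rho^n-\rho^n)\|_{L^2_H}$ is bounded by a constant $L=L(\alpha,\beta,f)$: the constant mass bound controls $\|K^*\rho^n\|_{L^2_H}$ via \ref{ass:K2}, while $\langle\rho_{\gamma^*},w^n\rangle\le \alpha^{-1}C\sqrt{2M_0}$ (using $a_{\gamma^*}\le\alpha^{-1}$ and $F(\mu^n)\le M_0$) bounds $M_n$ and thus $\|K^*\hat\rho^n\|_{L^2_H}$.

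Combining this with $r(\mu^n)\le G(\mu^n)$ from Lemma~\ref{lem:primal dual} and optimizing in $s$ gives the closed recursion $r_{n+1}\le r_n-\max_{s\in[0,1]}\{s\,r_n-\tfrac{s^2}{2}L\}$, whose two regimes $r_n\le L$ and $r_n>L$ are handled separately; the standard one-dimensional lemma for such recursions then yields $r(\mu^n)\le C/n$. For the qualitative claims, $r(\mu^n)\to 0$ means $T_{\alpha,\beta}(\mu^n)\to\min T_{\alpha,\beta}$, so weak* lower semicontinuity of $T_{\alpha,\beta}$ (Theorem~\ref{thm:existence}) forces any weak* accumulation point $\mu^*$ — which exists by the compactness noted above — to satisfy $T_{\alpha,\beta}(\mu^*)\le\min T_{\alpha,\beta}$, i.e.\ to be a minimizer. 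If the minimizer is unique, every subsequence has a further subsequence converging weakly* to it, and metrizability of the weak* topology on the bounded set containing the iterates upgrades this to convergence of the whole sequence.

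I expect the main obstacle to be the uniform curvature bound: one must exploit the specific structure of the atoms — the constant-in-time mass of $\rho^n_t$, the explicit weight bound $a_\gamma\le\alpha^{-1}$, and the a~priori estimate on $\langle\rho_{\gamma^*},w^n\rangle$ coming from $F(\mu^n)\le M_0$ — to ensure $L$ is independent of $n$. Without the mass conservation guaranteed by the continuity-equation constraint this step would break down, and the identification of the linear coefficient as $-G(\mu^n)$ relies crucially on the iterates remaining in the region where $\varphi$ is the identity, which is exactly what monotonicity provides.
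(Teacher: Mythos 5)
Your proposal is correct and follows essentially the same route as the paper's proof: monotonicity via the coefficients optimization step, the descent inequality $T_{\alpha,\beta}(\mu^{n+1})-T_{\alpha,\beta}(\mu^n)\le -s\,G(\mu^n)+\tfrac{s^2}{2}\|K^*(\hat\rho-\rho^n)\|^2_{L^2_H}$ obtained from the surrogate segment toward $M\mu_{\gamma^*}$, the uniform curvature bound from the mass estimates $\|\rho^n\|_{\M(X)}\le M_0/\alpha$ and $a_{\gamma^*}\le\alpha^{-1}$, the comparison $r\le G$ from Lemma~\ref{lem:primal dual}, and weak* lower semicontinuity plus the coercivity of $J_{\alpha,\beta}$ for the accumulation-point claims. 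Your explicit optimization over $s\in[0,1]$ with its two regimes is exactly the Armijo--Goldstein stepsize split used in the paper, so no substantive difference remains.
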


  \begin{rem} \label{rem:convergence} The proof of Theorem \ref{thm:convergence} follows similar steps to \cite[Theorem 5.8]{K-Pikkarainen} and \cite[Theorem~5.4]{pieper}. We highlight that the proof of monotonicity of $\{T_{\alpha,\beta}(\mu^n)\}$ and of the decay estimate \eqref{eq:decaytheorem}, does not make full use of the coefficients optimization step (Section \ref{subsec:optimization}) of Algorithm \ref{alg:core}.  Rather, the proof relies on energy estimates for the surrogate iterate $\mu^s := \mu^n + s(M\mu_{\gamma*} - \mu^n)$, where $\gamma^*\in \AC^2$ is a solution of the insertion step \eqref{eq:hard_min}, $M \geq 0$ a suitable constant, and the step-size $s$ is chosen according to the Armijo-Goldstein condition (see \cite[Section 5]{K-Pikkarainen}). Since $\mu^s$ is a candidate for problem \eqref{eq:min_coeff}, the added coefficients optimization step in Algorithm \ref{alg:core} does not worsen the convergence rate. 
  \end{rem}

\begin{proof}  
Fix $n \in \N$, and let $\{\mu^n\}$ be a sequence generated by Algorithm \ref{alg:core}, which by construction is of the form \eqref{eq:alg_iterates}.
Recall that $w^n_t:=-K_t(K_t^* \rho_t^n - f_t) \in C(\olom)$ is the associated dual variable. Let $\mu^*=(\rho^*,m^*)$ in $\ext(C_{\alpha,\beta})$ be a solution to the insertion step, that is, $\mu^*$ solves \eqref{eq:alg_insertion}. The existence of such $\mu^*$ is guaranteed by Proposition \ref{prop:wolfe equiv}. 
Without loss of generality, we can assume that the algorithm does not stop at iteration $n$, that is, $\ps{\rho^*}{w^n}>1$ according to the stopping condition in Section \ref{subsec:insertion}. In particular, $\rho^*\neq 0$.  Recalling that $\ext(C_{\alpha,\beta})=\{0\} \cup \points$ (Theorem \ref{thm:extremal}), we then have $\mu^*=\mu_{\gamma^*}$ for some $\gamma^* \in \AC^2$. By the coefficients optimization step, the next iterate is of the form  
\[
\mu^{n+1} = \sum_{j=1}^{N_n} c_j^{n+1} \mu_{\gamma_{j}^n} + c^* \mu_{\gamma^*}   \,,
\] 
and the coefficients $(c_1^{n+1},\ldots, c_{N_n}^{n+1},c^*)$ solve the quadratic problem 
\begin{equation} \label{eq:proof_quadratic}
\min_{c=(c_1,\ldots,c_{N_n +1} ) \in \R_+^{N_{n}+1}} T_{\alpha,\beta} \left(  \sum_{j=1}^{N_n} c_j \mu_{\gamma_j^n} + c_{N_n+1} \mu_{\gamma^*} \right)\,. 
\end{equation}
\textit{Claim.} There exists a constant $\tilde{C}>0$ independent of $n$ such that, if $T_{\alpha,\beta}(\mu^n) \leq M_0$, then
   \begin{equation} \label{decay}
    r(\mu^{n+1}) - r(\mu^n) \leq - \tilde C \, r(\mu^n)^2 \,.
    \end{equation}

\smallskip

To prove the above claim, assume that $T_{\alpha,\beta}(\mu^n) \leq M_0$. Define $\hat \mu:=M\mu_{\gamma^*}$, with $M:=M_0 \ps{\rho_{\gamma^*}}{w^n}$.   
Since $\mu_{\gamma^*}$ solves \eqref{eq:alg_insertion} and $\ps{\rho_{\gamma^*}}{w^n}>1$, by Proposition \ref{prop:wolfe equiv} we have that $\hat \mu$ minimizes in \eqref{aux2} with respect to $w^n$. Therefore, according to \eqref{eq:dual_gap}, 
\begin{equation} \label{eq:proof_dualgap}
G(\mu^n)=J_{\alpha,\beta}(\mu^n) - \f (J_{\alpha,\beta}(\hat \mu)) - \ps{\rho^n-\hat \rho}{w^n} \,,
\end{equation}
given that $J_{\alpha,\beta}(\mu^n)\leq T_{\alpha,\beta}(\mu^n) \leq M_0 <+\infty$. For $s \in [0,1]$ set $\mu^s:=\mu^n + s (\hat \mu - \mu^n)$. By convexity of $J_{\alpha,\beta}$ (see Lemma \ref{lem:prop J}) we have $J_{\alpha,\beta}(\mu^s)<+\infty$. Hence we can apply the polarization identity at \eqref{eq:bilinear2} with respect to $w^n,\rho^n,\rho^s$ to obtain
\[
\begin{aligned}
T_{\alpha,\beta}(\mu^s) - T_{\alpha,\beta}(\mu^n) & = -s \ps{\hat \rho - \rho^n}{w^n} + \frac{s^2}{2} \norm{K^*( \rho^n - \hat \rho  )}^2_{L^2_H} + J_{\alpha,\beta}(\mu^s) - J_{\alpha,\beta}(\mu^n) \\
& \leq -s \ps{\hat \rho - \rho^n}{w^n} + \frac{s^2}{2} \norm{K^*( \rho^n - \hat \rho  )}^2_{L^2_H} + s(\f(J_{\alpha,\beta}(\hat \mu)) - J_{\alpha,\beta}(\mu^n)  ) \,,
\end{aligned}
\]
where in the second line we used convexity of $J_{\alpha,\beta}$ and the inequality $t \leq \f(t)$ for all $t \geq 0$. Note that $\mu^s$ is a competitor for \eqref{eq:proof_quadratic}, so that $T_{\alpha,\beta}(\mu^{n+1}) \leq T_{\alpha,\beta}(\mu^s)$. Recalling \eqref{eq:proof_dualgap} we then obtain
\begin{equation} \label{eq:conv_claim}
r(\mu^{n+1})-r(\mu^n)=T_{\alpha,\beta}(\mu^{n+1}) - T_{\alpha,\beta}(\mu^n) \leq - s G(\mu^n) +  \frac{s^2}{2} \norm{K^*(\hat \rho -\rho^n   )}^2_{L^2_H}\,,
\end{equation}
which holds for all $s \in [0,1]$.  Choose the stepsize $\hat{s}$ according to the Armijo-Goldstein condition (see e.g. \cite[Definition 4.1]{pieper}) as 
\begin{equation} \label{eq:armijo}
    \hat{s}:= \min \left\{  1, \frac{G(\mu^n)}{  \norm{ K^* (\hat{\rho} - \rho^n)  }^2_{L^2_H}} \right\} \,,
 \end{equation}
  with the convention that $C/0 = +\infty$ for $C>0$. If $\hat s <1$, by \eqref{eq:conv_claim} and \eqref{est primal dual} we obtain
\begin{equation} \label{eq:conv_claim:2}
r(\mu^{n+1})-r(\mu^n)  \leq - \frac12  
\frac{G(\mu^n)^2}{ \norm{ K^* (\hat{\rho} - \rho^n)  }^2_{L^2_H}}
 \leq -\frac12  \frac{r(\mu^n)^2}{ \norm{ K^* (\hat{\rho} - \rho^n)  }^2_{L^2_H}}\,.
\end{equation}
Since we are assuming $T_{\alpha,\beta}(\mu^n) \leq M_0$, by definition of $T_{\alpha,\beta}$ we deduce that $\|\rho^n\|_{\mathcal{M}(X)} \leq M_0/\alpha$. Moreover, since $\hat \mu = M\mu_{\gamma^*}$, by \ref{ass:K2} in Section \ref{sec:assumptions}, the estimate $a_{\gamma^*} \leq 1/\alpha$, and the Cauchy-Schwarz inequality yield  
\[
\begin{aligned}
\norm{\hat \rho}_{\M(X)} = a_{\gamma^*}M & \leq a_{\gamma^*}^2 M_0 \int_0^1 |\ps{\delta_{\gamma^*(t)}}{w^n_t}| \, dt \leq \frac{M_0}{\alpha^2} \int_0^1 \norm{w^n_t}_{C(\olom)} \, dt \\
& \leq \frac{M_0}{\alpha^2} \left(\int_0^1 \norm{w^n_t}_{C(\olom)}^2 \, dt\right)^2 
 \leq \frac{M_0}{\alpha^2} C \sqrt{2} \left( \frac12 \norm{K^*\rho^n - f}_{L^2_H}^2 \right)^{1/2}\\
 & \leq \frac{M_0}{\alpha^2} C \sqrt{2} \, T_{\alpha,\beta} (\mu^n)^{1/2} \leq  \frac{M_0^{3/2}}{\alpha^2} C \sqrt{2}\,,
\end{aligned}
\]
where $C>0$ is the constant in \ref{ass:K2}. ~%
Thus we can estimate
\[%
 \| K^*(\hat \rho - \rho^n)\|^2_{L^2_H} \leq 2C^2 (\|\hat \rho\|_{\mathcal{M}(X)}^2 + \|\rho^n\|_{\mathcal{M}(X)}^2) \leq \tilde{C}\,,
\]%
  with $\tilde{C}>0$ not depending on $n$. Inserting the above estimate in \eqref{eq:conv_claim:2} yields \eqref{decay} and the claim follows. Assume now $\hat s=1$, so that $ \| K^*(\hat \rho - \rho^n)\|^2_{L^2_H} \leq G(\mu^n)$. From \eqref{eq:conv_claim} and \eqref{est primal dual} we obtain
  \begin{equation} \label{eq:conv_claim:3}
  r(\mu^{n+1})-r(\mu^n)  \leq - \frac12  G(\mu^n) \leq -\frac12 r(\mu^n) \,.
  \end{equation}
  As $T_{\alpha,\beta}(\mu^n) \leq M_0$, we also have $r(\mu^n) \leq T_{\alpha,\beta}(\mu^n) \leq M_0$. Therefore we can find a constant $\tilde{C}>0$ not depending on $n$ such that $\tilde{C}r(\mu^n)^2 \leq r(\mu^n)$. Substituting the latter in \eqref{eq:conv_claim:3} yields \eqref{decay} and the proof of the claim is concluded.
  
\smallskip

Finally, we are in position to prove \eqref{eq:decaytheorem}. Since $\mu^0=0$, we have that $T_{\alpha,\beta}(\mu^0) \leq M_0$. Thus we can inductively apply \eqref{decay} and %
obtain that $T_{\alpha,\beta}(\mu^n) \leq M_0$ for all $n\in \N$. In particular, \eqref{decay} holds for every $n\in \N$ and, as a consequence, the sequence $\{r(\mu^n)\}$ is non-increasing. Setting $r_n := r(\mu^n)$, we then get
  \begin{equation*}
  \frac{1}{r_{n}} - \frac{1}{r_{0}} = \sum_{j=0}^{n-1} \frac{1}{r_{j+1}} - \frac{1}{r_{j}} = \sum_{j=0}^{n-1} \frac{r_j - r_{j+1}}{r_j r_{j+1}} \geq \sum_{j=0}^{n-1}\frac{\tilde{C} r_j^2}{r_j r_{j+1}} \geq \tilde Cn\,,
  \end{equation*}
and \eqref{eq:decaytheorem} follows.
The remaining claims follow from the weak* lower semicontinuity of $T_{\alpha,\beta}$ (Theorem \ref{thm:existence}) and estimate \eqref{lem:prop J est}, given that $\{\mu^n\}$ is a minimizing sequence for \eqref{eq:prel_main_prob}. %
\end{proof}

 \subsection{Stopping condition}\label{subsec:stop}

We prove the optimality statements in the stopping criterion for the core algorithm  anticipated in Section~\ref{subsec:insertion}. In the following $G$ denotes the primal-dual gap introduced in \eqref{eq:dual_gap}. We denote by $\mu^n$ the $n$-th iterate of Algorithm \ref{alg:core}, which is of the form \eqref{eq:alg_iterates}, and by $w^n$ the corresponding dual variable \eqref{eq:alg_dual_var}. Moreover let $\mu_{\gamma^*}$ be the atom associated to the curve $\gamma^* \in \ACe^2$ solving the insertion step \eqref{eq:hard_min}. %

\begin{lem}\label{lem:stopalgorithm}
For all $n \in \N$ we have
\begin{equation} \label{eq:stop:2}
G(\mu^n) = \Lambda(\ps{\rho_{\gamma^*}}{w^n})\,, \qquad \Lambda(t):= 
\begin{cases}
	0   & \, \text{ if } \, t \leq 1\,, \\
 \frac{M_0}{2} \left( t^2 -1\right)   & \, \text{ if } \, t>1 \,, 
\end{cases}
\end{equation}
where $M_0\geq0$ is defined at \eqref{def:M0}. 
In particular $\mu^n$ is a solution of \eqref{eq:prel_main_prob} if and only if $\ps{\rho_{\gamma^*}}{ w^n} \leq 1$.
\end{lem}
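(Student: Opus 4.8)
The aim is to collapse $G(\mu^n)$ into an explicit function of the single scalar $t^* := \langle \rho_{\gamma^*}, w^n\rangle$. I first note that \eqref{eq:dual_gap} can be put in a form that is manifestly independent of the chosen solution $\hat\mu$ of \eqref{aux2}: since every minimizer $\hat\mu$ realizes the optimal value $\ell_n := \varphi(J_{\alpha,\beta}(\hat\mu)) - \langle \hat\rho, w^n\rangle$ of \eqref{aux2}, the definition of the gap reads
\[
G(\mu^n) = J_{\alpha,\beta}(\mu^n) - \langle \rho^n, w^n\rangle - \ell_n\,.
\]
So two things have to be identified: the \emph{slack} $J_{\alpha,\beta}(\mu^n) - \langle \rho^n, w^n\rangle$, and the optimal value $\ell_n$.

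For $\ell_n$ I would reuse the Fenchel reduction \eqref{eq:fenchel} from the proof of Proposition~\ref{prop:wolfe equiv}. Because $\mu_{\gamma^*}$ solves the insertion step \eqref{eq:alg_insertion} and $J_{\alpha,\beta}(\mu_{\gamma^*}) = 1$ by Lemma~\ref{lem:additivity}, one has $\ell_n = \inf_{\lambda \geq 0}\bigl(-\lambda t^* + \varphi(\lambda)\bigr)$. Minimizing this one-variable piecewise expression explicitly, with $\varphi$ as in \eqref{def:fi}, gives the minimizer $\lambda = 0$ when $t^* \leq 1$ and $\lambda = M_0 t^*$ (the value $M$ of \eqref{eq:M_choice}) when $t^* > 1$; using $\varphi(M_0 t^*) = \tfrac{M_0}{2}\bigl((t^*)^2 + 1\bigr)$ one obtains $\ell_n = -\Lambda(t^*)$ with $\Lambda$ as in \eqref{eq:stop:2}. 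This is a routine computation.

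The key step, and the main obstacle, is to show that the slack vanishes, i.e.\ $\langle \rho^n, w^n\rangle = J_{\alpha,\beta}(\mu^n)$ for all $n$. For $n=0$ this is immediate since $\mu^0 = 0$. For $n \geq 1$, the iterate $\mu^n = \sum_j c_j^n \mu_{\gamma_j^n}$ is a minimizer of the quadratic program \eqref{eq:subsec:quadratic:2}, so its first-order (KKT) conditions $c_j^n\bigl((\Gamma c^n)_j + b_j\bigr) = 0$ hold. Inserting the expressions \eqref{eq:subsec:quadratic:3} for $\Gamma$ and $b$, using $K_t^*\rho_t^n = \sum_k c_k^n a_{\gamma_k^n} K_t^*\delta_{\gamma_k^n(t)}$ and the adjoint identity $w_t^n(\gamma_j(t)) = -\langle K_t^*\delta_{\gamma_j(t)}, K_t^*\rho_t^n - f_t\rangle_{H_t}$, I would verify the identity $(\Gamma c^n)_j + b_j = 1 - \langle \rho_{\gamma_j^n}, w^n\rangle$. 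Hence $\langle \rho_{\gamma_j^n}, w^n\rangle = 1$ for every active atom ($c_j^n > 0$); summing against the coefficients and invoking Lemma~\ref{lem:additivity} yields $\langle \rho^n, w^n\rangle = \sum_j c_j^n = J_{\alpha,\beta}(\mu^n)$. Substituting the vanishing slack and $\ell_n = -\Lambda(t^*)$ into the displayed formula gives $G(\mu^n) = \Lambda(t^*)$, which is \eqref{eq:stop:2}.

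Finally, the optimality characterization is a direct corollary of Lemma~\ref{lem:primal dual}, by which $\mu^n$ solves \eqref{eq:prel_main_prob} if and only if $G(\mu^n) = 0$. Since $\Lambda(t) = 0$ for $t \leq 1$ while $\Lambda(t) = \tfrac{M_0}{2}(t^2 - 1) > 0$ for $t > 1$ in the nondegenerate case $M_0 > 0$ (when $M_0 = 0$ the datum is zero, $w^0 = 0$, and the algorithm already stops at $\mu^0 = 0$), I conclude that $G(\mu^n) = 0$ precisely when $t^* = \langle \rho_{\gamma^*}, w^n\rangle \leq 1$, establishing the final statement.
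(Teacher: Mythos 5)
Your proof is correct and follows essentially the same route as the paper's: you compute the optimal value of the linearized problem \eqref{aux2} via the one-variable reduction \eqref{eq:fenchel} (the paper does the equivalent direct calculation with $\hat\mu = M\mu_{\gamma^*}$, yielding \eqref{eq:stop:4}), and you establish $J_{\alpha,\beta}(\mu^n) = \ps{\rho^n}{w^n}$ from the first-order optimality of the coefficient step exactly as in \eqref{eq:F_minus1}, concluding via Lemma~\ref{lem:primal dual}. The only differences are cosmetic: you invoke KKT complementary slackness where the paper differentiates at an interior minimizer, and you treat $n=0$ explicitly.
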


Before proving Lemma \ref{lem:stopalgorithm}, we give a quantitative version of the stopping condition of Section~\ref{subsec:insertion}.

\begin{rem} \label{rem:relax_stop}
With the same notations as above, consider the condition
 \begin{equation} \label{eq:alg:tol}
\Lambda(\ps{\rho_{\gamma^*}}{w^n})  < {\rm TOL}
 \end{equation}
where ${\rm TOL}>0$ is a fixed tolerance. Notice that 
$G(\mu^n) =\Lambda(\ps{\rho_{\gamma^*}}{w^n})$
by Lemma \ref{lem:stopalgorithm}. Thus, assuming \eqref{eq:alg:tol}, and  using \eqref{est primal dual}, we see that the functional residual defined at \eqref{def:residual} satisfies  $r(\mu^n) <{\rm TOL}$, i.e., $\mu^n$ almost minimizes \eqref{eq:prel_main_prob}, up to the tolerance. Therefore, the condition at \eqref{eq:alg:tol} can be employed as a quantitative stopping criterion for Algorithm \ref{alg:core}.
\end{rem}

\begin{proof}[Proof of Lemma \ref{lem:stopalgorithm}]
We start by computing $G(\mu^n)$ for a fixed $n \in \N$. Set $\hat \mu:=M\mu_{\gamma^*}$, where $M$ is defined as in \eqref{eq:M_choice} with $w=w^n$.   
Since $\mu_{\gamma^*} \in \ext(C_{\alpha,\beta})$ solves \eqref{eq:alg_insertion}, we have that $\hat \mu$ solves \eqref{aux2} with respect to $w^n$ (see Proposition \ref{prop:wolfe equiv}). By Theorem \ref{thm:convergence}, the sequence $\{T_{\alpha,\beta}(\mu^n)\}$ is non-increasing. Thus $J_{\alpha,\beta}(\mu^n) \leq T_{\alpha,\beta}(\mu^n) \leq T_{\alpha,\beta}(\mu^0)  = M_0<+\infty$, since $\mu^0=0$. Then \eqref{eq:dual_gap} reads
\begin{equation} \label{eq:stop:3}
G(\mu^n)= J_{\alpha,\beta}(\mu^n) - \ps{\rho^n}{w^n} -  \f(J_{\alpha,\beta}(\hat \mu))+ \ps{\hat \rho}{w^n} \,.
\end{equation}
Notice that by one-homogeneity of $J_{\alpha,\beta}$ (see Lemma \ref{lem:prop J}) and the fact that $\mu_{\gamma^*} \in \ext(C_{\alpha,\beta})$ we have $J_{\alpha,\beta}(\hat \mu)=M$. Recalling the definition of $\f$ at \eqref{def:fi}, by direct calculation we obtain
\begin{equation} \label{eq:stop:4}
-\f(J_{\alpha,\beta}(\hat \mu)) + \ps{\hat \rho}{w^n}= \Lambda( \ps{\rho_{\gamma^*}}{w^n})\,.  %
\end{equation}
We now compute the remaining terms in \eqref{eq:stop:3}. By \eqref{eq:min_coeff} and \eqref{def:new_iterate} at the step $n-1$,  we know that the coefficients $c^n=(c_1^n,\ldots,c_{N_n}^n) \in \R^{N_n}_{++}$ of $\mu^{n}$ solve   the minimization problem
   \begin{equation} \label{eq:stop:minprob}
   \min_{c=(c_1,\ldots,c_{N_n}) \in \R^{N_n}_{++}} T_{\alpha,\beta}  \left( \sum_{j=1}^{N_n} c_j \mu_{\gamma_j^n}\right)\,.
   \end{equation}
 Since $\gamma_j^n$ in \eqref{eq:alg_iterates} is continuous, we have that $(t \mapsto \delta_{\gamma_j^n(t)}) \in \pcurves$ and thus $t \mapsto K_t^* \delta_{\gamma_j^n(t)}$ belongs to $\Ltwo$, by Lemma \ref{lem:prop K}.  In view of \eqref{eq:alg_iterates}, definition of $T_{\alpha,\beta}$ and Lemma \ref{lem:additivity}, we can expand the expression at \eqref{eq:stop:minprob}, differentiate with respect to each component of $c$, and recall that $c^n$ is optimal in \eqref{eq:stop:minprob}, to obtain
\begin{equation}\label{eq:F_minus1}
\begin{aligned}
 0 &  = \sum_{j=1}^{N_n}  c_j^n a_{\gamma_i^n} a_{\gamma_j^n} 
   \int_0^1 \ps{K_t^* \delta_{\gamma_i^n(t)}}{K_t^* \delta_{\gamma_j^n(t)}}_{H_t} \, dt  
  -  a_{\gamma_i^n} \int_0^1 
   \ps{K_t^* \delta_{\gamma_i^n(t)}}{f_t}_{H_t} \, dt 
     + 1\\
     & =  \ps{K^* \rho_{\gamma^n_i}}{K^* \rho^n}_{L^2_H}  
  -   \ps{K^*  \rho_{\gamma^n_i}}{f}_{L^2_H} 
     + 1 = 1 - \langle  \rho_{\gamma_i^n}, w^n\rangle\,,
\end{aligned}
\end{equation}
which holds for all $i=1,\ldots,N_n$. By Lemma \ref{lem:additivity} and linearity of $\ps{\cdot}{\cdot}$, we obtain the identity $J_{\alpha,\beta}(\mu^n) = \ps{\rho^n}{w^n}$. 
The latter, together with \eqref{eq:stop:3} and \eqref{eq:stop:4}, yields \eqref{eq:stop:2}. For the remaining part of the statement, notice that by \eqref{eq:stop:2} we have $G(\mu^n)=0$ if and only if $\ps{\rho_{\gamma^*}}{w^n} \leq 1$. Therefore, the thesis follows by Lemma \ref{lem:primal dual}. 
\end{proof}

  \subsection{Time-discrete version} \label{sec:time-discrete}

The minimization problem \eqref{eq:prel_main_prob} presented in Section \ref{sec:OT_regularization} is posed for
  time-continuous measurements $f \in \Ltwo$, whose discrepancy to the reconstructed curve $t \mapsto \rho_t$ is modelled by the fidelity term $\F$ at \eqref{prel:fidelity}.   
  In real-world applications, however, the measured data is time-discrete, i.e.,  we can assume that measurements are taken at times $0 = t_0 < t_1 < \ldots < t_T = 1$, with $T \in \N$ fixed. Hence the data is of the form  $f = (f_{t_0}, f_{t_1},\ldots, f_{t_T})$, where $f_{t_i} \in H_{t_i}$.  
		For this reason, and with the additional goal of lowering the computational cost, we decided to present numerical experiments (Section \ref{sec:numerics}) where the time-continuous fidelity term $\F$ is replaced by a discrete counterpart. In  Section \ref{subsub:discr_fid} we show how to modify the mathematical framework discussed so far, in order to deal with the time-discrete case. Consequently, it is immediate to adapt Algorithm \ref{alg:core} to the resulting time-discrete functional, as discussed in Section \ref{subsub:discr_alg}. We remark that all the results up to this point will hold, in a slightly modified version, also for the discrete setting discussed below.
		
\subsubsection{Time-discrete framework}	\label{subsub:discr_fid}	
 We replace  problem  \eqref{eq:prel_main_prob} with
\begin{equation}
  \min_{\mu \in \M } \, T^{\mathcal{D}}_{\alpha,\beta}(\mu)\,, \quad T^{\mathcal{D}}_{\alpha,\beta}(\mu) := \F^{\mathcal{D}}(\mu) + J_{\alpha,\beta}(\mu)\,,
  \label{eq:discrete_minimization} \tag{$\mathcal{P}_{\rm discr}$}
\end{equation}
where the time-discrete fidelity term $\F^{\mathcal{D}} \colon \M \to [0,+\infty]$ is defined by
\[
\F^{\mathcal{D}}(\mu):=
\begin{cases}
\displaystyle \frac{1}{2 (T+1)}\sum_{i=0}^T 
  \norm{K_{t_i}^* \rho_{t_i} - f_{t_i}}_{H_{t_i}}^2  &  \, \text{ if } \rho=dt \otimes \rho_t \, , \, \, (t \mapsto \rho_t) \in \curves \,,	\\
  + \infty    & \, \text{ otherwise.}
\end{cases}
\]
Here, $H_{t_i}$ are real Hilbert spaces, and the given data vector $f=(f_{t_0},\ldots,f_{t_T})$ satisfies $f_{t_i} \in H_{t_i}$ for all $i=0,\ldots,T$. The forward operators $K_{t_i}^* \colon \M(\olom) \to H_i$ are assumed to be linear continuous and weak*-to-weak continuous, for each $i=0,\ldots,T$. The minimization problem \eqref{eq:discrete_minimization} is well-posed by the direct method of calculus of variations: indeed $T^{\mathcal{D}}_{\alpha,\beta}$ is proper, $J_{\alpha,\beta}$ is weak* lower semicontinuous and coercive in $\M$ (Lemma \ref{lem:prop J}), and $\F^{\mathcal{D}}$ is lower semicontinuous with respect to the convergence in \eqref{topology}. In particular a solution $\mu^*= (\rho^*, m^*)$ to \eqref{eq:discrete_minimization} 
will satisfy $\rho^*=dt \otimes \rho_t^*$ with $(t \mapsto \rho_t^*  )\in \pcurves$. We now define the other quantities which are needed to formulate the discrete counterpart of the theory developed so far. 
For a given curve of measures $(t \rightarrow \tilde \rho_t) \in \curves$,
the corresponding dual variable \eqref{eq:dual_variable}
is redefined to be
\begin{equation} \label{eq:discr_variable}
  w_{t_i} := -K_{t_i}( K_{t_i}^* \tilde \rho_{t_i} - f_{t_i}) \in C(\olom)\,,
\end{equation}
for each $i=0,\ldots,T$. 
Consequently, we redefine the associated scalar product \eqref{eq:scalarproduct_alg} to
\[%
  \ps{\rho}{w}_{\mathcal{D}} :=  
  \begin{cases}
    \displaystyle \frac{1}{T+1}\sum_{i=0}^T  \langle \rho_{t_i}, w_{t_i} \rangle_{\M(\olom),C(\olom)}  & \,\, \text{if } \rho=dt \otimes \rho_t\, , \,\, (t \mapsto \rho_t) \in \curves\,, \\ %
  -\infty &\,\, \text{otherwise.}
 \end{cases}
  \label{eq:discrete_scalarproduct}
\]%
It is straightforward to check that all the results in Section \ref{sec:OT_regularization} hold with $T_{\alpha,\beta}$ and $\ps{\cdot}{\cdot}$ replaced by $T^{\mathcal{D}}_{\alpha,\beta}$ and $\ps{\cdot}{\cdot}_{\mathcal{D}}$ respectively, with the obvious modifications. In particular, the problem
\begin{equation} \label{eq:min_discrete}
\min_{\mu \in \ext(C_{\alpha,\beta})} \,  -\ps{\rho}{w}_{\mathcal{D}}
\end{equation}
admits a solution $\mu^*$, where $C_{\alpha,\beta}:=\{ J_{\alpha,\beta}(\mu)\leq 1\}$. One can perform a similar computation to the one at \eqref{eq:diff_computation:11} to obtain equivalence between \eqref{eq:min_discrete} and
\begin{equation}
  \label{eq:hard_min_discrete}
  \min_{\gamma \in \ACe^2}
  -\left< \rho_\gamma ,w \right>_{\mathcal{D}}  = \min_{\gamma \in \AC^2} \left\{ 0, 
  -\frac{a_{\gamma}}{T+1}\sum_{i=0}^T w_{t_i}(\gamma(t_i)) \right\}\,.
\end{equation}

\begin{rem} \label{rem:piecewise}
Assume additionally that $\olom$ is convex. Then 
a solution $\gamma^* \in \ACe^2$ to problem \eqref{eq:hard_min_discrete} is either $\gamma^*=\gamma_\infty$, or $\gamma^* \in \AC^2$ with $\gamma^*$ linear in each interval $[t_i,t_{i+1}]$. %
Indeed, given any curve $\gamma \in \AC^2$, denote by $\tilde{\gamma}$ the piecewise linear version of $\gamma$ sampled at $t_i$ for $i=0,\ldots,T$. Then $\left< \rho_\gamma ,w \right>_{\mathcal{D}} \leq \left< \rho_{\tilde{\gamma}} ,w \right>_{\mathcal{D}}$, due to the inequality %
$\int_0^1 |\dot{\tilde{\gamma}}(t)|^2 \, dt \leq \int_0^1 |\dot \gamma(t)|^2 \, dt$. 
\end{rem}

\subsubsection{Adaption of Algorithm \ref{alg:core} to the time-discrete setting} \label{subsub:discr_alg}
The core algorithm in Section \ref{subsec:alg_descr} is  readily adaptable to the task of minimizing  \eqref{eq:discrete_minimization}. To this end, let $f_{t_i} \in H_{t_i}$ be a given datum and $\alpha,\beta>0$ be fixed parameters for \eqref{eq:discrete_minimization}. 
Assume that $\mu^n$ is the current sparse iterate, of the form \eqref{eq:alg_iterates}.  
The dual variable associated to $\mu^n$ is defined, according to \eqref{eq:discr_variable}, by
$w_{t_i}^n := -K_{t_i}( K_{t_i}^* \rho_{t_i}^n - f_{t_i})$.
Similarly to Section \ref{subsec:insertion}, 
the insertion step in the time-discrete version consists 
in finding a curve $\gamma^* \in \ACe^2$ which solves 
\eqref{eq:hard_min_discrete} with respect to $w^n_i$. 
Such curve defines 
a new atom $\mu_{\gamma^*}$ according to Definition \ref{def:atom}. Adapting the proofs of Lemmas \ref{lem:primal dual}, \ref{lem:stopalgorithm} to the discrete setting, one deduces the following stopping condition:
\begin{itemize}
  \item if $\left<\rho_{\gamma^*}, w^n\right>_{\mathcal{D}} \leq 1$, 
     then $\mu^n$ is solution to \eqref{eq:discrete_minimization}. The algorithm outputs $\mu^n$ and stops,  
  \item if $\left< \rho_{\gamma^*}, w^n\right>_{\mathcal{D}} > 1$, then $\mu^n$ is not a solution to \eqref{eq:discrete_minimization} and $\gamma^* \in \AC^2$.
    The found atom $\mu_{\gamma^*}$ is inserted in the $n$-th iterate $\mu^n$ and the algorithm continues.  
\end{itemize}

Set $\gamma_{N_n+1}^n := \gamma^*$. 
The time-discrete version of the coefficients optimization step discussed in Section~\ref{subsec:optimization} consists in solving 
\begin{equation} \label{eq:coeff_discrete}
  \min_{ (c_1,c_2,\ldots,c_{N_n+1}) \in
  \R_{+}^{N_n+1}} T^{\mathcal{D}}_{\alpha,\beta} \left(
  \sum_{j=1}^{N_n+1} c_j \mu_{\gamma_j^{n}} \right)\,.
 \end{equation}
By proceeding as in Section \ref{subsec:quadratic}, one can check that \eqref{eq:coeff_discrete} is equivalent to a quadratic program of the form \eqref{eq:alg_quadratic_program}, where the matrix $\Gamma \in \R^{(N_n +1) \times (N_n+1)}$ and the vector $b \in \R^{N_n+1}$ are given by
\[
  \Gamma_{j,k}:= \frac{a_{\gamma_j} a_{\gamma_k}}{T+1}
  \sum_{i=0}^{T} \ps{K_{t_i}^*\delta_{\gamma_j(t_i)}}{K_{t_i}^*\delta_{\gamma_k(t_i)}}_{H_{t_i}}\,, 
  \quad b_j:= 1- \frac{a_{\gamma_j}}{T+1}\sum_{i=0}^{T} \ps{K_{t_i}^*\delta_{\gamma_j}(t_i)}{f_{t_i}}_{H_{t_i}} \,.
\]
In view of Remark \ref{rem:piecewise} and of the above construction, we note that the iterates of the discrete algorithms are of the form \eqref{eq:alg_iterates} with $\gamma_j^n \in \AC^2$ piecewise linear. Finally, we remark that the time-discrete algorithm obtained with the above modifications has the same sublinear rate of convergence stated in Theorem \ref{thm:convergence}. %

\section{The algorithm: numerical implementation} \label{sec:numerical_implementation}

This aim of this section is twofold. First, in Section \ref{sec:insstepheu} we describe how to approach the 
minimization of the insertion step problem \eqref{eq:hard_min} by means of gradient descent strategies. This analysis is performed under additional assumptions on the operators $K_t \colon H_t \to C(\olom)$, which, loosely speaking, require that $K_t$ map into the space $C^{1,1}(\olom)$ of differentiable functions with bounded Lipschitz gradient. The strategies proposed will result in the \emph{multistart gradient descent} Subroutine \ref{alg:multistart} (Section \ref{subsec:subroutine}), which, given a dual variable $w$, outputs a set of stationary points for  \eqref{eq:hard_min}. Then, in Section \ref{subsec:acceleration}, we present two acceleration steps that can be added to Algorithm \ref{alg:core} to, in principle, enhance its performance. %
	The first acceleration strategy, called the \emph{multiple insertion step}, proceeds by adding all the outputs of Subroutine \ref{alg:multistart} to the current iterate. The second strategy, termed \emph{sliding step}, consists in locally descending the target functional $T_{\alpha,\beta}$ at \eqref{eq:prel_main_prob} in a neighbourhood of the curves composing the current iterate, while keeping the coefficients fixed. These strategies are finally added to Algorithm \ref{alg:core}. The outcome is Algorithm \ref{alg:full}, presented in Section \ref{subsec:full}, which we name  \emph{dynamic generalized conditional gradient} (DGCG).

\subsection{Insertion step implementation}\label{sec:insstepheu}
We aim at minimizing the linearized problem \eqref{eq:hard_min} in the insertion step, which is of the form 
\begin{equation} \label{eq:sec5_ins}
\min_{\gamma \in H^1([0,1];\olom)} \left\{
    0, F(\gamma)
  \right\}\,, \quad F(\gamma):=-a_{\gamma} \int_0^1 w_t(\gamma(t)) \, dt \,,
\end{equation}
where the dual variable $t \mapsto w_t$ is defined for a.e.~$t \in (0,1)$ by $w_t:=-K_t(K_t^* \tilde{\rho}_t - f_t) \in C(\olom)$, for some curve $(t \mapsto \tilde{\rho}_t) \in \curves$ and data $f \in \Ltwo$ fixed. In \eqref{eq:sec5_ins} we also identified $\AC^2$ with $H^1([0,1];\olom)$, and employed the notations at \eqref{ext_meas}. We remind the reader that, although  \eqref{eq:sec5_ins} admits solutions (see Section \ref{subsec:iterates}), in practice they may be difficult to compute numerically (Remark~\ref{rem:non-linear}). %
Therefore we turn our attention at finding stationary points for the functional $F$ at \eqref{eq:sec5_ins}, relying
on gradient descent methods. %
To make this approach feasible, we require additional assumptions on the operators $K_t^*$ (see Assumption \ref{def:additional K} below), which allow to extend the functional $F$ to the Hilbert space $H^1:=H^1([0,1];\R^d)$ and make it Fr\'echet differentiable, without altering the value of the minimum at \eqref{eq:sec5_ins}. In particular this allows for a gradient descent procedure to be well defined (Section~ \ref{subsec:main_descent}). With this at hand, in Section \ref{subsec:multi} we define a descent operator $\mathcal{G}_w$ associated to $F$, which, for a starting curve $\gamma \in H^1$, outputs either a stationary point of $F$ or the infinite length curve $\gamma_\infty$.  The starting curves for $\mathcal{G}_w$ are of two types:
\begin{itemize}
  \item random starts which are guided by the values of the dual variable
    $w$ (Section \ref{subsec:random}),
  \item crossovers between known stationary points (Section \ref{subsec:crossover}).
\end{itemize}
We then propose a minimization strategy for \eqref{eq:sec5_ins}, which is implemented in the \textit{multistart gradient descent algorithm} contained in  
Subroutine \ref{alg:multistart} (Section \ref{subsec:subroutine}). Such algorithm inputs a set of curves and a dual variable $w$, and returns a set $\mathcal{S} \subset \AC^2$, which is either empty or contains stationary curves for $F$ at \eqref{eq:sec5_ins}.  

\subsubsection{Minimization strategy} \label{subsec:main_descent}

The additional assumptions required on $K_t^*$ are as follows.

 \begin{assumption}\label{def:additional K}
For a.e.~$t \in (0,1)$ the linear continuous operator $K_t^* \colon C^{1,1}(\olom)^* \to H_t$ satisfies 
   \begin{enumerate}[label=\textnormal{(F\arabic*)}]
      \item $K_t^*$ is weak*-to-weak continuous, with pre-adjoint denoted by $K_t: H_t \to C^{1,1}(\overline \Omega)$,\label{ass:F1}
    \item $\norm{K^*_t} \leq C$ for some constant $C>0$ not depending on $t$, \label{ass:F2}
    \item the map $t \mapsto K_t^*\rho$ is strongly measurable for every fixed $\rho \in \M(\olom)$, \label{ass:F3}
  \item  there exists a closed convex set $E \Subset \Omega$ such
      that $
     \supp (K_t f),
     \supp (\nabla(K_t f))\subset E$ for all $f \in H_t$ and a.e.~$t \in (0,1)$, where $\nabla$ denotes the spatial gradient. \label{ass:F4}
  \end{enumerate}
  \end{assumption}

Notice that \ref{ass:F1}-\ref{ass:F3} imply \ref{ass:K1}-\ref{ass:K3} of Section \ref{sec:assumptions}, due to the embedding $\M(\olom) \hookrightarrow C^{1,1}(\olom)^*$. Also note that \ref{ass:F4} has no counterpart in the assumptions of Section \ref{sec:assumptions}, and is only assumed for computational convenience, as discussed below. 
The problem of solving \eqref{eq:sec5_ins} under \ref{ass:F1}-\ref{ass:F4} is addressed in Appendix \ref{sec:computingextremal}; here we summarize the main results obtained. First, we extend $F$ to the Hilbert space $H^1$, by setting $w_t(x):=0$ for all $x\in \R^d \smallsetminus \olom$ and a.e.~$t \in (0,1)$. Due to \ref{ass:F4} we have that $w_t \in C^{1,1}(\R^d)$. We then show that $F$ is continuously Fr\'echet differentiable on $H^1$, with locally Lipschitz derivative (Proposition \ref{prop:gateaux}). 
     Denote by $D_\gamma F \in (H^1)^*$ the Fr\'echet derivative of $F$ at $\gamma$ (see \eqref{gateaux F} for the explicit computation) and introduce the set of stationary points of $F$ with non-zero energy 
\begin{equation*}
\mathfrak{S}_F :=\{\gamma \in H^1 \, \colon \, F(\gamma) \neq 0 \,, \,\, D_\gamma F = 0 \}\,.%
\end{equation*}
     In Proposition \ref{cor:optimality} we prove that all the points in $\mathfrak{S}_F$ satisfy  
$\gamma([0,1]) \subset \Om$. As a consequence, $\mathfrak{S}_F$ contains all the solutions to the insertion problem \eqref{eq:sec5_ins} whenever
\begin{equation} \label{eq:stop_inf}
\min_{\gamma \in H^1([0,1];\olom)} \{0,F(\gamma)\}<0\,.
\end{equation}
\begin{rem} \label{rem:min_ins}
The case \eqref{eq:stop_inf} is the only one of interest: indeed Algorithm \ref{alg:core} stops if \eqref{eq:stop_inf} is not satisfied, with the current iterate being a solution to the target problem \eqref{eq:prel_main_prob} (see Section \ref{subsec:insertion}). In  Proposition \ref{prop:test} we prove that  \eqref{eq:stop_inf} is equivalent to
\begin{equation} \label{def:Pw}
P(w):=\int_0^1 \max_{x \in \olom} w_t(x)\, dt > 0 \,.
\end{equation}
As $P(w)$ is easily computable, condition \eqref{def:Pw} provides an implementable test for \eqref{eq:stop_inf}. Moreover, note that \eqref{eq:stop_inf} is satisfied when $F$ is computed from the dual variable $w^n$ associated to the \mbox{$n$-th} iterate $\mu^n=\sum_{j=1}^{N_n}c_j^n \delta_{\gamma_j^n}$ of Algorithm \ref{alg:core}, and $n \geq 1$. This is because $F(\gamma_j^n)=-1$ for all $j=1,\ldots,N_n$, as shown in \eqref{eq:F_minus1}.  
	
\end{rem}\label{rem:GD}
If \eqref{eq:stop_inf} is satisfied, we aim at computing points in $\mathfrak{S}_F$ by gradient descent. To this end, we say that $\{\gamma^n\}$ in $H^1$ is a \textit{descent sequence} if
\begin{equation} \label{def:sec5_descent}
F(\gamma^0)<0 \,, \,\,\,\,\, \gamma^{n+1} = \gamma^n - \delta_n D_{\gamma^n} F\,,  \,\,\,\, \text{ for all }\,\, n \in \N \cup\{0\}\,,
\end{equation}
where $D_{\gamma^n}F \in (H^1)^*$ is identified with its Riesz representative in $H^1$, and $\{\delta_n\}$ is a stepsize chosen according to the Armijo-Goldstein or Backtracking-Armijo rules.
In Theorem \ref{thm:gradient_descent} we prove that, if $\{\gamma^n\}$ is a descent sequence, there exists at least a subsequence such that $\gamma^{n_k} \to \gamma^*$ strongly in $H^1$; moreover, any such accumulation point $\gamma^*$ belongs to $\mathfrak{S}_F$. To summarize, descent sequences in the sense of \eqref{def:sec5_descent} enable us to compute points in $\mathfrak{S}_F$, which are candidate solutions to \eqref{eq:sec5_ins} whenever \eqref{eq:stop_inf} holds. %

\subsubsection{Descent operator} \label{subsec:multi}

Fix a dual variable $w$ and consider the functional $F$ defined as in \eqref{eq:sec5_ins}. The descent operator $\mathcal{G}_w \colon H^1([0,1];\olom) \to H^1 \cup \{\gamma_\infty\}$ associated to $w$ is defined by
\begin{equation} \label{def:desc_G}
\mathcal{G}_w(\gamma) :=
\begin{cases}
\gamma^*  & \,\, \text{ if }  \,\, F(\gamma)<0 \,,\\
\gamma_\infty    & \,\, \text{ otherwise}\,,
\end{cases}
\end{equation}
where $\gamma^*$ is an accumulation point for the descent sequence $\{\gamma^n\}$ defined according to \eqref{def:sec5_descent} with starting point $\gamma^0:=\gamma$. In view of the discussion in the previous section, we know that the image of $\mathcal{G}_w$ is contained in $\mathfrak{S}_F\cup \{\gamma_\infty\}$.
Note that, if $P(w)>0$, in principle the image of $\mathcal{G}_w$ will contain at least one stationary point $\gamma^* \in \mathfrak{S}_F$, as in this case \eqref{eq:stop_inf} holds (Remark \ref{rem:min_ins}). However, in simulations, we can only compute $\mathcal{G}_w$ on some finite family of curves $\{ \gamma_i\}_{i \in I}$, 
which we name the \textit{starts}. Thus, in general, we have no guarantee of finding points in $\mathfrak{S}_F$, even if \eqref{eq:stop_inf} holds. The situation improves if $w=w^n$ is the dual variable associated to the $n$-th iterate $\mu^n=\sum_{j=1}^{N_n}c_j^n \delta_{\gamma_j^n}$ of Algorithm \ref{alg:core} and $n \geq 1$. In this case, setting $\mathcal{A}:=\{\gamma_{1}^n, \ldots, \gamma_{N_n}^n\}$, we have that $\mathcal{G}_{w^n}(\mathcal{A}) \subset \mathfrak{S}_F$ by definition \eqref{def:desc_G} and Remark \ref{rem:min_ins}. Therefore, by including the curves in $\mathcal{A}$ in the set of considered starts, we are guaranteed of obtaining at least one point in $\mathfrak{S}_F$.

\subsubsection{Random starts} \label{subsec:random}

We now describe how we randomly generate starting points \(\gamma\) in $H^1([0,1];\olom)$ for the descent operator $\mathcal{G}$ at \eqref{def:desc_G}. %
We start by selecting time nodes $0=t_0 < t_1< \ldots < t_T=1$ drawn uniformly
in $[0,1]$ (if operating in the time-discrete setting, we sample instead with
a uniform probability on the finite set of sampling times on which the fidelity term of
\eqref{eq:discrete_minimization} is defined). 
We choose the value of a random start $\gamma$ at time $t_i$ 
seeking to maximize the dual variable $w_{t_i}$.  
To achieve this, let $Q: \R \rightarrow \R_+$ be non-decreasing and monotonous, %
and define the probability measure on $E$ 
\begin{equation*}
  \mathbb{P}_{w_{t_i}}(A) := \frac{\int_A Q(w_{t_i}(x)) \,dx }{\int_{\Omega} Q(w_{t_i}(x))\, dx }\,,
\end{equation*}
for $A \subset E$ Borel measurable and \(E \Subset \Omega\) introduced in 
Assumption \ref{def:additional K}.
We then draw samples from $\mathbb{P}_{w_{t_i}}$ with the rejection-sampling 
algorithm, and assign those samples to $\gamma(t_i)$. Using that $E$ is a
convex set, the random curve $\gamma \in \AC^2$ is obtained by interpolating
linearly the values $\gamma(t_i) \in E$.  This procedure is executed by the
routine \texttt{sample}, which inputs a dual variable $w$ and outputs a
randomly generated curve $\gamma \in \AC^2$.

\subsubsection{Crossovers between stationary points}\label{subsec:crossover}
It is heuristically observed that 
stationary curves have a tendency to share common ``routes'', as for example seen in the reconstructions presented in  
Figures~\ref{fig:3:reconstruction_comparison} and
\ref{fig:3:reconstruction_comparison2}. %
It is then a reasonable ansatz to combine curves which are sharing routes, in order to increase the likelihood for the newly obtained crossovers to share common routes with the sought global  
minimizers of $F$. Such crossovers will then be employed as starts for the descent operator $G$ at \eqref{def:desc_G}. Formally, the crossover is achieved as follows. We fix small parameters $\e>0$ and  $0<\delta<1$. For  $\gamma_1, \gamma_2 \in \AC^2$ define the set 
\begin{equation*}
R_\e(\gamma_1,\gamma_2):=  \left\{ t\in [0,1] \, \colon \, 
    |\gamma_1(t) -\gamma_2(t)| < \e \right\}\,.
\end{equation*}
We say that $\gamma_1$ and $\gamma_2$ share routes if $R_\e(\gamma_1,\gamma_2) \neq \emptyset$. If $R_\e(\gamma_1,\gamma_2) = \emptyset$, we perform no operations on $\gamma_1$ and $\gamma_2$. If instead $R_\e(\gamma_1,\gamma_2) \neq \emptyset$, first notice that $R_\e(\gamma_1,\gamma_2)$ is relatively open in $[0,1]$.  Denote by $I$ any of its connected components. Then $I$ is an interval with endpoints $t^-$ and $t^+$, satisfying $0\leq t^- < t^+\leq 1$. The crossovers of  $\gamma_1$ and $\gamma_2$ in $I$ are the two curves $\gamma_3, \gamma_4 \in \AC^2$ defined by 
\begin{equation*}
\gamma_3(t):=
  \begin{cases}
     \gamma_1(t), &\  t \in [\,0, \hat t- \delta \tilde t \,] \\
    \gamma_2(t), &\  t \in [\,\hat  t + \delta \tilde{t}, 1]
  \end{cases} \,\,,
  \qquad
   \gamma_4(t):=
  \begin{cases}
    \gamma_2(t), &\  t \in [\,0, \hat t - \delta \tilde{t}\,] \\
   \gamma_1(t), &\  t \in [\,\hat t + \delta \tilde{t}, 1\,] 
  \end{cases}
\end{equation*}
and linearly interpolated in $(\hat t - \delta \tilde t, \hat t + \delta \tilde t)$, where $\hat t :=(t^++t^-)/2$ and $\tilde t :=(t^+-t^-)/2$, i.e.,
\begin{equation*}
\frac{t - (\hat t -\delta \tilde t)}{2\delta \tilde t} \gamma_2(\hat t + \delta \tilde{t}) - \frac{t - (\hat t + \delta \tilde t)}{2\delta \tilde t} \gamma_1(\hat t - \delta \tilde{t})\,, \quad t \in (\hat t - \delta \tilde t, \hat t + \delta \tilde t)
\end{equation*}
is, for instance, the result of the linear interpolation of $\gamma_3$ in $(\hat t - \delta \tilde t, \hat t + \delta \tilde t)$.
We construct the crossovers of $\gamma_1$ and $\gamma_2$ in each connected component of $R_\e(\gamma_1,\gamma_2)$ obtaining $2M$ new curves, with $M$ being the number of connected components of $R_\e(\gamma_1,\gamma_2)$. The described procedure is executed by the routine \texttt{crossover}, which inputs two curves $\gamma_1, \gamma_2 \in \AC^2$ and outputs a set of curves in $\AC^2$, possibly empty.

\subsubsection{Multistart gradient descent algorithm} \label{subsec:subroutine}

In Subroutine~\ref{alg:multistart} we sketch the proposed method to 
search for a minimizer of \eqref{eq:sec5_ins}: this is implemented in the function \texttt{MultistartGD}, which inputs a set of curves $\mathcal{A}$ and a dual variable $w$, and outputs a (possibly empty) set $\mathcal{S}$ of stationary points for $F$ defined at \eqref{eq:sec5_ins}. We now describe how to interpret such subroutine in the context of Algorithm~\ref{alg:core}, and how it can be used to replace the insertion step operation at line 4. 

Given the $n$-th iteration $\mu^n=\sum_{j=1}^{N_n}c_{j}^n \delta_{\gamma_j^n}$
of Algorithm \ref{alg:core}, we define $\mathcal{A}:=\{\gamma_{1}^{n}, \ldots,
\gamma_{N_n}^n\}$ if $n \geq 1$ and $\mathcal{A}:=\emptyset$ if $n=0$. The dual
variable $w^n$ is as in \eqref{eq:alg_dual_var}. We initialize to empty the
sets $\mathcal{S}$ and $\mathcal{O}$ of known stationary and crossover points
respectively. The condition at line 2 of Subroutine~\ref{alg:multistart} checks
if we are at the $0$-th iteration and $P(w^0)\leq 0$. In case this is
satisfied, then $0$ is the minimum of \eqref{eq:sec5_ins}, and no stationary
point is returned: indeed in this situation Algorithm \ref{alg:core} stops,
with $\mu^0=0$ being the minimum of \eqref{eq:prel_main_prob} (see Section
\ref{subsec:main_descent}). Otherwise, the set $\mathcal{A}$ (possibly empty)
is inserted in $\mathcal{O}$. Then $N_{\rm max}$ initializations of the
multistart gradient descent are performed, where a starting point $\gamma$ is
either chosen from the crossover set $\mathcal{O}$, if the latter is non empty,
or sampled at random by the function \texttt{sample} described in Section
\ref{subsec:random}. We then descend $\gamma$, obtaining the new curve
$\gamma^*:=\mathcal{G}_{w^n}(\gamma)$, where $\mathcal{G}_{w^n}$ is defined at
\eqref{def:desc_G}. If the outputted point $\gamma^*$ does not belong to the
set of known stationary points $\mathcal{S}$, and $\gamma^* \neq
\gamma_{\infty}$, then we first compute the crossovers of $\gamma^*$ with all
the elements of $\mathcal{S}$, and afterwards insert it in $\mathcal{S}$. After
$N_{\rm max}$ iterations, the set $\mathcal{S}$ is returned. Notice that
$\mathcal{S}$ could be empty only if $\mathcal{A}=\emptyset$, i.e., if
\texttt{MultistartGD} is called at the first iteration of Algorithm
\ref{alg:core} (see Section \ref{subsec:multi}).

The modified insertion step, that is, line 4 of Algorithm \ref{alg:core}, reads as follows. First we set $\mathcal{A}=\boldsymbol{\gamma}$ and compute $\mathcal{S}:=\texttt{MultistartGD}(\mathcal{A},w^n)$. If $\mathcal{S}=\emptyset$, the algorithm stops and returns the current iterate $\mu^n$. Otherwise, the element in $\mathcal{S}$ with minimal energy with respect to $F$ is chosen as candidate minimizer, and inserted as $\gamma^*$ in line 4.  
\renewcommand{\algorithmcfname}{Subroutine}
\setcounter{algocf}{0}
  \SetKwInput{KwInput}{Input}
  \SetKwInput{KwOutput}{Function}
  \SetCommentSty{mycommfont}
  \begin{algorithm}[h!]
    \caption{Multistart gradient descent for the insertion step} \label{alg:multistart}
  \DontPrintSemicolon \smallskip
  \KwOutput{\texttt{MultistartGD}
  }
  \KwInput{Set of curves $\mathcal{A}$, dual variable $w_t \in C^{1,1}(\olom)$ with $\supp w_t \subset \Om$
  }
  $\mathcal{S} := \emptyset $\,, \, $\mathcal{O} := \emptyset $ 
  \tcp{Sets of known stationary and crossover points} 
  \If{ $\mathcal{A} == \emptyset$  \bf{and} $P(w)\leq 0$}
	{\Return  $\mathcal{S}$
	}
  $\mathcal{O} \leftarrow \mathcal{A}$ \\	
  \For{$k=1,\ldots,N_{\rm max}$}
  {
  \tcc{Restart from a random curve or a crossover one} 
   \If{ $\mathcal{O} == \emptyset$ }
    {
      $\gamma \leftarrow \texttt{sample}(w)$ 
    }
    \Else
    {
      $\gamma \leftarrow$ {\bf get from} $\mathcal{O}$\,,\,\,
      $\mathcal{O} \leftarrow \mathcal{O} \smallsetminus  \{\gamma\}$
    }
    \tcc{Descend, crossover, and incorporate to the stationary points set}
    $\gamma^* \leftarrow \mathcal{G}_w(\gamma)$ \\
    \If{ $\gamma^* \not \in \mathcal{S}$ {\bf and} $\gamma^* \neq \gamma_{\infty}$}
    {
      \For{$\eta \in \mathcal{S}$}
      {
        $\mathcal{O} \leftarrow \mathcal{O} \cup \texttt{crossover}(\gamma^*, \eta)$ 
        \tcp{crossover with all known stationary points}
      }
      $\mathcal{S} \leftarrow \mathcal{S} \cup \{\gamma^*\}$ 
    }
  }
  \Return $\mathcal{S}$
  \end{algorithm}
\renewcommand{\algorithmcfname}{Algorithm}

\subsection{Acceleration strategies} \label{subsec:acceleration}

In this section we describe two acceleration strategies that can be incorporated in Algorithm \ref{alg:core}. 

\subsubsection{Multiple insertion step}
\label{subsec:multiple_insertion}

This is an extension of the insertion step for Algorithm \ref{alg:core} described in Section \ref{subsec:insertion}. 
Precisely, the multiple insertion step consists in 
inserting into the current iterate $\mu^n$ all the atoms associated to the stationary points in the set $\mathcal{S}$ produced by Subroutine \ref{alg:multistart} with respect to the dual variable $w^n$. %
This procedure is motivated by the following observations. First, computationally speaking,
    the coefficients optimization step described in Section \ref{subsec:optimization} is cheap and fast. %
Second, it is observed that stationary points  are good candidates for the insertion step in the GCG method presented in \cite{pieper} to solve \eqref{intro:BLASSO}. 
This observation can be extended similarly to our framework, noticing that the addition of multiple stationary points is encouraging the iterates to concentrate around every atom of the ground-truth measure $\mu^\dagger$ and consequently the algorithm could need fewer iterations to efficiently locate the support of $\mu^\dagger$.

\subsubsection{Sliding step} \label{subsec:sliding}
The sliding step proposed in this paper is a natural extension of the one introduced for BLASSO in \cite{K-Pikkarainen}
and further analyzed in \cite{denoyelle2019sliding}. Precisely, 
given a current iterate $\mu = \sum_{j=1}^{N} c_j \mu_{\gamma_j}$ of Algorithm \ref{alg:core}, we fix the weights  $ \boldsymbol{c} = (c_1 ,\ldots, c_{N})$ and define the functional $T_{\alpha,\beta,\boldsymbol{c}} : (H^1)^{N} \rightarrow \R$ as
\begin{equation} \label{def:sliding}
T_{\alpha,\beta,\boldsymbol{c}}(\eta_1,\ldots, \eta_{N})  := T_{\alpha,\beta} \left( \sum_{j=1}^{N} c_j \mu_{\eta_j} \right) \quad \text{for all} \quad (\eta_1,\ldots, \eta_{N}) \in \left(H^1\right)^{N}\,.
\end{equation}
We then perform additional gradient descent steps in the space $(H^1)^{N}$ for the functional $T_{\alpha,\beta,\boldsymbol{c}}$, starting from the tuple of curves $(\gamma_1,\ldots,\gamma_N)$ contained in the current iterate $\mu$. Formally, this procedure is possible: in Proposition~\ref{prop:grad_flow} we prove that $T_{\alpha,\beta,\boldsymbol{c}}$ is continuously Fr\'echet differentiable in $(H^1)^{N}$ under Assumption \ref{def:additional K}, with derivative given by \eqref{prop:grad_flow:gat}.
This step can be intertwined with the coefficients optimization
one, by alternating between modifying the position of the current curves, and
optimizing their associated weights.

\subsection{Full algorithm} \label{subsec:full}
By including Subroutine \ref{alg:multistart} and the proposed acceleration steps of Section~\ref{subsec:acceleration} into Algorithm~\ref{alg:core}, 
we obtain Algorithm~\ref{alg:full},
which we name the \emph{dynamic generalized conditional gradient} (DGCG).  %

\subsubsection{Algorithm summary}
We employ the  notations of Section \ref{subsec:alg_summary}. In particular, Algorithm~\ref{alg:full} generates, as iterates, tuples of coefficients $\boldsymbol{c} = (c_1, \ldots, c_{N})$ with $c_j>0$, and curves $\boldsymbol{\gamma} = (\gamma_1, \ldots, \gamma_{N})$ with $\gamma_j \in \AC^2$.
We now summarize the main steps of Algorithm \ref{alg:full}. The first 3 lines are unaltered from Algorithm \ref{alg:core}, and they deal with tuples initializations and assembly of the measure iterate $\mu^n$. The multiple insertion step is carried out by Subroutine \ref{alg:multistart}, via the function \texttt{MultistartGD} at line 4, which is called with arguments $\boldsymbol{\gamma}$ and $w^n$. The output is a set of curves $\mathcal{S}$ which contains stationary points for the functional $F$ at \eqref{eq:sec5_ins} with respect to the dual variable $w^n$. If $\mathcal{S}=\emptyset$, the algorithm stops and outputs the current iterate $\mu^n$.  As observed in Section~\ref{subsec:multi}, this can only happen at the first iteration of the algorithm. Otherwise, in line $7$, the function \texttt{order} is employed to input $\mathcal{S}$ and output a tuple of curves, obtained by ordering the elements of $\mathcal{S}$ increasingly with respect to their value of $F$. As anticipated in Section \ref{sec:insstepheu}, the first element of $\texttt{order}(\mathcal{S})$, named $\gamma_{N_n + 1}^*$, is considered to be the best available candidate solution to the insertion step problem \eqref{eq:sec5_ins}, and is used in the stopping condition at lines $8$, $9$. Such stopping condition has been used similarly in Algorithm \ref{alg:core}, with the difference that in Algorithm \ref{alg:full} the curve $\gamma_{N_n + 1}^*$ is not necessarily the global minimum of $F$, but, in general, just a stationary point. We further discuss such stopping criterion in Section \ref{subsec:stop_alg2} below. After, the found stationary points are inserted in the current iterate, and the algorithm alternates between the  coefficients optimization step (Section~\ref{subsec:optimization}) and the sliding step (Section~\ref{subsec:sliding}). Such operations are executed from line $11$ to $16$ in Algorithm \ref{alg:full}, for $K_{\rm max}$ times. %

\subsubsection{Stopping condition}  \label{subsec:stop_alg2}

The stopping condition for Algorithm \ref{alg:full} is implemented in lines $8$, $9$: 
when $\gamma_{N_n + 1}^*$ satisfies $\ps{\rho_{\gamma_{N_n+1}^*}}{ w^n} \leq 1$, the algorithm stops and outputs the current iterate $\mu^n$.
Due to the definition of $F$, such condition is equivalent to say that the Subroutine~\ref{alg:multistart} has not been
  able to find any curve $\gamma^* \in \AC^2$ satisfying 
$\left<\rho_{\gamma^*}, w^n\right> > 1 $.
The main difference when compared to the stopping condition for Algorithm \ref{alg:core} (Section \ref{subsec:insertion}) is that, in Algorithm \ref{alg:full}, the curve  $\gamma_{N_n + 1}^*$ is generally not a global minimum of $F$. %
 As a consequence, Lemma \ref{lem:stopalgorithm} does not hold, and the condition $\langle\rho_{\gamma_{N_n + 1}^*}, w^n\rangle \leq 1$ is not equivalent to the minimality of the current iterate $\mu^n$ for \eqref{eq:prel_main_prob}. The evident drawback is that %
 Algorithm~\ref{alg:full} could stop even if the current iterate does not solve \eqref{eq:prel_main_prob}. However, it is at least possible to say that, if Algorithm \ref{alg:full} continues after line 9, then  the current iterate does not solve \eqref{eq:prel_main_prob}. Thus, in this situation, the correct decision is taken. 
We remark that, even if the stopping condition for Algorithm \ref{alg:full} does not ensure the minimality of the output, from a practical standpoint, if Subroutine \ref{alg:multistart} is employed with a high number of restarts the reconstruction is satisfactory. We also point out that the condition at line 8 can be replaced by the quantitative condition defined at \eqref{eq:alg:tol} in Remark \ref{rem:relax_stop}, with some predefined tolerance.

\subsubsection{Convergence and numerical residual} \label{subsec:conv_alg2}

In the following we will say that Algorithm \ref{alg:full} converges if $\texttt{MultistartGD}(\boldsymbol{\gamma},w^0) = \emptyset$, or if some iterate satisfies the stopping condition at line $8$. %
In case of convergence, we will denote by 
 $\mu^{N}$ the output value of Algorithm~\ref{alg:full}. We remind the reader that, due to the discussion in Section \ref{subsec:stop_alg2}, $\mu^N$ is considered to be an approximate solution for the minimization problem~\eqref{eq:prel_main_prob}. In order to analyze the convergence rate for Algorithm \ref{alg:full} numerically  we define the numerical residual as
\begin{equation}
  \label{eq:functional_distance}
  \tilde{r}(\mu^n) := T_{\alpha,\beta}(\mu^n) -
  T_{\alpha,\beta}(\mu^N)\,, \qquad n = 0,\ldots,N-1\,,
\end{equation}
with $\mu^n$ each of the computed intermediate iterates.
We further define the numerical primal-dual gap $\tilde{G}(\mu^n)$, 
which we compute by employing \eqref{eq:stop:2} with $\gamma^*=\gamma^*_{N_n+1}$.

\SetKwInput{KwInput}{Input}
\SetKwInput{KwOutput}{Output}
\SetCommentSty{mycommfont}
\begin{algorithm}[h!]
  \caption{Dynamic generalized conditional gradient (DGCG)} \label{alg:full}
\DontPrintSemicolon
\KwInput{
  Data $f \in \Ltwo$, parameters $\alpha,\beta > 0$, forward operators $K^*_t: \mathcal{M}(\overline \Omega) \mapsto H_t$ \smallskip \smallskip
} 
$\boldsymbol{c} \leftarrow ()$, \,\,
$\boldsymbol{\gamma} \leftarrow ()$ \\
\For{$n=0,1,\ldots$}
{ $N_n \leftarrow |\boldsymbol{\gamma} |$ , \,\,
$\mu^n \leftarrow \sum_{j=1}^{N_n} c_j \mu_{\gamma_{j}} $ ,\,\,
 $w_t^n \leftarrow  -K_t (K_t^* \rho^n_t - f_t)$ \\
  \tcc{Multiple insertion step employing Subroutine \ref{alg:multistart}}
$\mathcal{S} \leftarrow  \texttt{MultistartGD}(\boldsymbol{\gamma},w^n)$\\
\tcc{Stopping conditions}
 \If{$\mathcal{S}== \emptyset $} 
 	{\Return $\mu^n$
 	}
$(\gamma^*_{N_n + 1}, \ldots, \gamma^*_{N_n +  |\mathcal{S}|} )  \leftarrow \texttt{order}(\mathcal{S})$\\
  \If{$\ps{\rho_{\gamma_{N_n+1}^*}}{ w^n} \ \leq 1 $}
  {
    \Return $\mu^n$
  }
  $\tilde N_n \leftarrow N_n +  |\mathcal{S}|$, \,\,
   $ \boldsymbol{\gamma} = (\gamma_{1}, \ldots, \gamma_{\tilde N_n} ) \leftarrow (\boldsymbol{\gamma},\gamma_{N_n+1}^* , \ldots,\gamma^*_{N_n + |\mathcal{S}|} )$\\
  \For{$k=1,\ldots,K_{\rm max}$}
  {
  \tcc{Coefficients optimization step}
  $\boldsymbol{c} \leftarrow
    \argmin_{(c_1,\ldots,c_{\tilde N_n})\in \R_{+}^{\tilde N_n}} T_{\alpha,\beta} \left(
  \sum_{j=1}^{\tilde N_n}c_j \mu_{\gamma_j}\right)$ \\
 $\tilde N_n \leftarrow \# \{j: c_j > 0\}$\,,\,\,
  $( \boldsymbol{\gamma}, \boldsymbol{c}) \leftarrow \texttt{delete\_zero\_weighted}( \boldsymbol{\gamma}, \boldsymbol{c})$ \\
  
  \tcc{Sliding step}
$T_{\alpha,\beta,\boldsymbol{c}}(\eta_1,\ldots, \eta_{\tilde N_n}) := T_{\alpha,\beta}\left(\sum_{j = 1}^{\tilde N_n} c_j \mu_{\eta_j}\right) $\\
  $(\eta_1^*,\ldots,\eta_{\tilde N_n}^*) \leftarrow \texttt{descend} \quad T_{\alpha,\beta,\boldsymbol{c}} \quad
  \texttt{from}\quad  (\gamma_1,\ldots, \gamma_{\tilde N_n})$  \\
  $\boldsymbol{\gamma} \leftarrow (\eta_1^*,\ldots,\eta_{\tilde N_n}^*)$
  }
 $N_{n + 1} \leftarrow \tilde N_n$,\,\,
   $\mu^{n+1} \leftarrow \sum_{j=1}^{N_{n + 1}} c_j \mu_{\gamma_j}$
}
\end{algorithm}

\section{Numerical implementation and experiments}
\label{sec:numerics}
In this section we present the produced numerical experiments.  
In order to lower the computational cost, we chose to implement Algorithm \ref{alg:full} for the minimization of the time-discrete functional \eqref{eq:discrete_minimization} discussed in Section~\ref{sec:time-discrete}. The adaptation of Algorithm \ref{alg:full} to such setting is easily obtainable as a corollary of the discussion in Section \ref{subsub:discr_alg}.   
The simulations were produced
by a Python code that
is openly available at \url{https://github.com/panchoop/DGCG_algorithm/}.
For all the simulations we employ the following:
\begin{itemize}
  \item the considered domain is $\Omega := (0,1)\times (0,1) \subset \R^2$,
  \item the number of time samples is fixed to $T = 50$, with $t_i := i/T$ for $i =0,\ldots,T$,
  \item the data spaces $H_{t_i}$ and forward operators $K_{t_i}^* \colon \M(\olom) \to H_{t_i}$ are as in Section \ref{subsec:Fourier} below, and model a Fourier transform with time-dependent spatial undersampling. A specific choice of sampling pattern will be made in each experiment,
  \item %
     in each experiment problem \eqref{eq:discrete_minimization} is considered for specific
    choices of regularization parameters $\alpha,\beta>0$ and data
    $f=(f_{t_0},\ldots , f_{t_T})$ with $f_{t_i} \in H_{t_i}$,
     \item convergence for Algorithm \ref{alg:full} is intended as in Section
             \ref{subsec:conv_alg2}. The number of restarts $N_{\rm max}$ for
             Subroutine \ref{alg:multistart} is stated in each experiment.
             Also, we employ the quantitative  stopping condition for Algorithm
             \ref{alg:full} described in Remark \ref{rem:relax_stop}, with
             tolerance ${\rm TOL}:=10^{-10}$.
     \item the crossover parameters 
           described in Section \ref{subsec:crossover} are chosen as \(\varepsilon=0.05\) 
           and \(\delta = 1/T\). The random starts presented in
           Section \ref{subsec:random} are implemented using the function \(Q(x) = \exp(\max(x + 0.05,0)) - 1\). These parameter choices are purely heuristical and there is no reason to believe that they are optimal.
\end{itemize}
The remainder of the section is organized as follows. In Section \ref{subsec:data_noise_visual} we first introduce the measurement spaces and Fourier-type forward operators employed in the experiments. After, we explain how the synthetic data is generated in the noiseless case, and then detail on the noise model we consider. Subsequently, we show how the data and the obtained reconstructions can be visualized, by means of the so-called backprojections and intensities.
We then pass to the actual experiments in Section \ref{subsec:experiments}, detailing three of them. The first experiment (Section \ref{subsec:Ex1}), which is basic in nature, serves the purpose of illustrating how the measurements are constructed and how the data can be visualized. We then showcase the reach of the proposed regularization and algorithm in the second example (Section \ref{example:complex}). There, we consider more complex data with various levels of added noise. In particular, we show that the proposed dynamic setting is capable of reconstructing severely spatially undersampled data. The final experiment (Section \ref{subsec:Ex3}) illustrates a particular behaviour of our model when reconstructing sparse measures whose underling curves cross, as discussed in Remark \ref{rem:crossing}.  %
	We point out that in all the experiments presented, the algorithm converged faster, indeed linearly, than the sublinear rate predicted by Theorem \ref{thm:convergence}.  We conclude the section with a few general observations on the model and algorithm proposed.

\subsection{Measurements, data, noise model and visualization}
\label{subsec:data_noise_visual}

\subsubsection{Measurements}
\label{subsec:Fourier}
The measurements employed in the experiments are given by the time-discrete version, in the sense of Section \ref{sec:time-discrete}, of the spatially undersampled Fourier measurements introduced in Section \ref{subsec:Fourier_discrete_sample}. Such measurements will be suitably cut-off, in order to avoid boundary conditions when dealing with the insertion step (Sections \ref{subsec:insertion}, \ref{subsec:multiple_insertion}) and the sliding step (Section \ref{subsec:sliding}). %
Precisely, at each time instant $t_i$ we sample $n_i \in \N$ frequencies, encoded in the given vector $S_i=(S_{i,1},\ldots,S_{i,n_i}) \in (\R^2)^{n_i}$, for all $i \in \{0,\ldots,T\}$. The sampling spaces $H_{t_i}$ are defined as  the realification of $\C^{n_{i}}$, equipped with the inner product 
$ \ps{u}{v}_{H_{t_i}} := {\rm Re}\ps{u}{v}_{\C^{n_i}}/n_i$, where ${\rm Re}$ denotes the real part of a complex number.  According to \eqref{eq:app_Fourier3} we define the cut-off Fourier kernels $\psi_{t_i} \colon \R^2 \mapsto \C^{n_i}$ by 
\[%
\psi_{t_i}(x) := \left( \exp(-2\pi i x \cdot S_{i,k})\chi(x_1)\chi(x_2) \right)_{k=1}^{n_i},
\]%
where the map 
$\chi:(0,1) \mapsto (0,1)$ is defined by 
\begin{equation*}
\chi(z) :=\begin{cases}
    10 (\nicefrac{z}{0.1})^3 - 15 (\nicefrac{z}{0.1})^4 +
    6 (\nicefrac{z}{0.1})^5 & \quad \text{for } \, z \in [0, 0.1), \\
    1 & \quad \text{for } \, z \in [0.1, 0.9], \\
    10 (\nicefrac{(1-z)}{0.1})^3 - 15 (\nicefrac{(1-z)}{0.1})^4 +
    6 (\nicefrac{(1-z)}{0.1})^5 & \quad \text{for } \,z  \in (0.9, 1].
  \end{cases}
\end{equation*}
Notice that $\chi$ is twice differentiable, strictly increasing in $[0,0.1]$, 
strictly decreasing in $[0.9,1]$, and it satisfies $\chi(0) = \chi(1) = 0$, $\chi(z) = 1$ 
for all $z \in [0.1,0.9]$. Following \eqref{eq:app_Fourier2}, the cut-off undersampled Fourier transform and its pre-adjoint are given by the linear continuous operators $K_{t_i}^* \colon \mathcal{M}(\overline \Omega) \to H_{t_i}$ and $K_{t_i} \colon H_{t_i} \to C(\overline \Omega)$ defined by
\begin{equation} \label{def:forward_numerics}
  K_{t_i}^*(\rho) := \int_{\R^2} \psi_{t_i}(x) \, d \rho(x)\,, \qquad 
    K_{t_i}(h) := \left(\  x \mapsto  \left< \psi_{t_i}(x), h \right>_{H_{t_i}} \right)\,,
\end{equation}
for all $\rho \in \mathcal{M}(\olom)$, $h \in H_t$, where  $\rho$ is extended to zero outside of $\olom$, and the first integral is intended component-wise. 

\subsubsection{Data} \label{subsubsec:data}
For all the experiments the ground-truth consists of a sparse measure of the form
\begin{equation} \label{eq:ground_truth}
  \mu^\dagger =(\rho^\dagger,m^\dagger):= \sum_{j=1}^N  c^\dagger_j \mu_{\gamma^\dagger_j}\,,
\end{equation}
for some $N \in \N$, $c^\dagger_j>0$, $\gamma^\dagger_j \in \AC^2$, 
where we follow the notations at \eqref{ext_meas}. 
Given a ground-truth $\mu^\dagger$, %
the respective noiseless  data %
is constructed by $f_{t_i} := K_{t_i}^* \rho^\dagger_{t_i} \in H_{t_i}$, for $i =0,\ldots,T$. 

\subsubsection{Noise model} \label{subsubsec:noise}
Let $U_{i,k}, V_{i,k}$, for $i = 0,\ldots,T$
and $k = 1,\ldots, n_i$, be the realization of 
two jointly independent families
of standard 1-dimensional Gaussian random variables, with which we define the noise vector $\nu$ by
\begin{equation}\label{eq:noise}
  \nu_{i,k} := U_{i,k} + \mathrm{i} V_{i,k} \in \C, \quad 
  \nu_{t_i} := (\nu_{i,k})_{k=1}^{n_i} \in H_{t_i}, \quad 
  \nu := (\nu_0, \nu_1, \ldots,\nu_T)\,.
\end{equation}
Given some data $f=(f_{t_0},\ldots,f_{t_T})$, with $f_{t_i} \in H_{t_i}$, when $\nu \neq 0$, the corresponding noisy data $f^\e$ with noise level $\e \geq 0$ is taken as 
\[%
  f^\e := f + \e \ 
  \sqrt{
    \frac{ \sum_{i=0}^T \norm{f_{t_i}}_{H_{t_i}}^2}{ \sum_{i=0}^T \norm{\nu_{t_i}}_{H_{t_i}}^2}
  } \ \nu\,. 
  \label{eq:noisy_data}
\]%

\subsubsection{Visualization via backprojection} \label{subsubsec:back}
In general it is not illustrative to directly visualize the data. The proposed way to gain some insight on the data structure is by means of backprojections: given $f = (f_{t_0},\ldots, f_{t_T})$, with $f_{t_i} \in H_{t_i}$,
we call backprojection the map $w_{t_i}^0:= K_{t_i}f_{t_i}  \in C(\olom)$.
Note that $w_{t_i}^0$ corresponds to the dual 
variable at the first iteration of Algorithm \ref{alg:full}. 
As $\Omega = (0,1)^2$, such functions can be plotted at each time sample, 
allowing us to display the data.

\subsubsection{Reconstruction's intensities} \label{subsubsec:intensity}

Given a sparse measure 
$\mu := \sum_{j=1}^N c_j \mu_{\gamma_j}$, with $N \in \N$, $c_j >0$, $\gamma_j \in \AC^2$,  
the
intensity associated to the atom $\mu_{\gamma_j}$ is defined by
$I_{j} := c_j a_{\gamma_j}$. %
The quantity $I_j$ measures the intensity at time $t_i$ of the signal for a single source, as $K^*_{t_i}(c_j \rho_{\gamma_j(t_i)})=I_j K^*_{t_i}(\delta_{\gamma_j(t_i)})$.
    Therefore, 
when presenting reconstructions 
and comparing them to the given ground-truth,
we will use the intensity $I_j$ of each atom
instead of its associated weight $c_j$.

\subsection{Numerical experiments}  \label{subsec:experiments}

\subsubsection{Experiment 1 - Single atom with constant speed}
\label{subsec:Ex1}

We start with a basic example that serves at illustrating how to observe the data, the obtained reconstructions, 
and their respective distortions
due to the employed regularization.
We use constant-in-time forward Fourier-type measurements, 
with frequencies sampled from an Archimedean spiral:
for each sampling time $i \in \{0,\ldots,50\}$, we consider the same frequencies vector $S_{i} \in (\R^2)^{n_i}$ with  $n_i := 20$ and $S_{i,k}$ lying on a spiral for $k = 1,\ldots,20$ (see Figure~\ref{fig:1:data}). Thus, the corresponding forward operators defined by \eqref{def:forward_numerics} are constant in time, i.e., $K_{t_i}^*=K^*$.  
The employed ground-truth $\mu^\dagger$ is composed of a single atom
with intensity
$I^\dagger = 1$, 
and respective curve
$\gamma^\dagger(t) := (0.2,0.2) + t (0.6,0.6)$.
Accordingly, we have $\mu^\dagger = c^\dagger \mu_{\gamma^\dagger}$ with $c^\dagger = 1/a_{\gamma^\dagger}$.
We consider the case of noiseless data  $f_{t_i} := K^* \rho^\dagger_{t_i}$. 
The corresponding backprojection $w_{t_i}^0:=Kf_{t_i}$ can be visualized in Figure~\ref{fig:1:data}
for some selected time samples.
For the proposed data $f$ we solve the minimization problem 
\eqref{eq:discrete_minimization} employing Algorithm~\ref{alg:full}.
The obtained reconstructions are presented in Figure~\ref{fig:1:reconstruction_comparison},
where we display the results for two different parameter choices $\alpha,\beta$.
Given the simplicity of the considered example, we employed 
Subroutine~\ref{alg:multistart} with only 5 restarts, that is, $N_{\rm max}=5$.  
In the respective cases of parameters  $\alpha=\beta = 0.1$ and $\alpha=\beta = 0.4$, Algorithm \ref{alg:full} converged in $2$ and $1$ iterations, and had an execution time of $2$ and $1$ minutes (the employed CPU was an Apple M1 8 Core 3.2 GHz, running native arm64 Python 3.9.7).
\begin{figure}[b!]
  \begin{subfigure}[t]{0.22\textwidth}
      \includegraphics{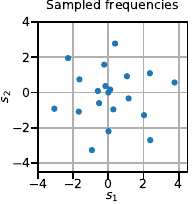}%
  \end{subfigure}%
  \begin{subfigure}[t]{0.775\textwidth}
    \includegraphics{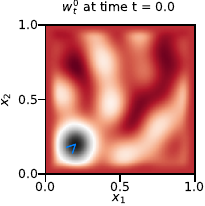}%
    \includegraphics{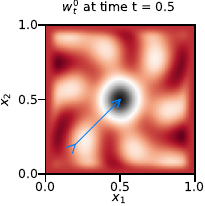}%
    \includegraphics{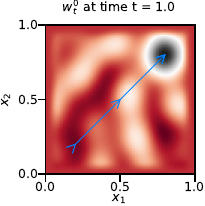}%
    \raisebox{0.14in}{%
      \includegraphics{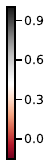}%
    }%
  \end{subfigure}
  \caption{Time-constant frequency samples $\{S_{i,k}\}_{k}$ and backprojections
    $w_{t_i}^0$ for data $f_{t_i} := K^*\rho^\dagger_{t_i}$
    at times $t_0 = 0,\ t_{25} = 0.5,\ t_{50} = 1$, with 
 the ground-truth curve $\gamma^\dagger$ superimposed in blue color.}
  \label{fig:1:data}
\end{figure}
\begin{figure}[t!]
  \begin{subfigure}[c]{0.3\textwidth}
      \includegraphics{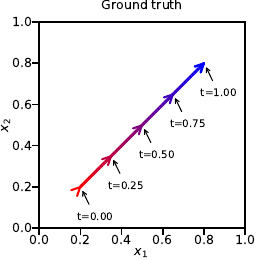}%
    \caption{Considered ground-truth. \\ \ }
    \label{fig:1:ground_truth}
  \end{subfigure}%
  \begin{minipage}[c]{0.7\textwidth}
  \begin{subfigure}[c]{\textwidth}
    \includegraphics{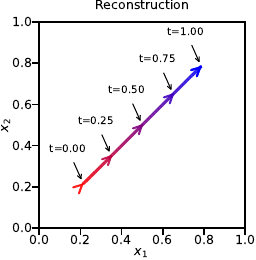}%
    \includegraphics{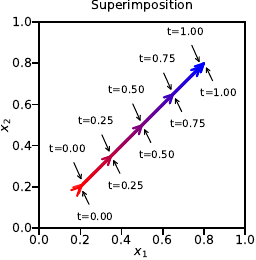}%
    \includegraphics{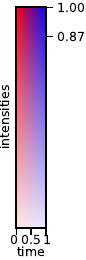}%
    \caption{Computed reconstruction with parameters 
    $(\alpha,\beta) = (0.1,0.1)$.\\ \ }
    \label{fig:1:recons0.1}
  \end{subfigure}
  \\
  \hspace{0.3\textwidth}%
  \begin{subfigure}[b]{\textwidth}
    \includegraphics{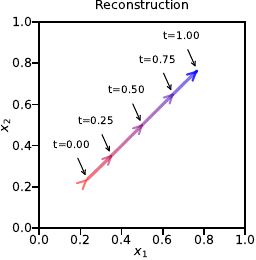}%
    \includegraphics{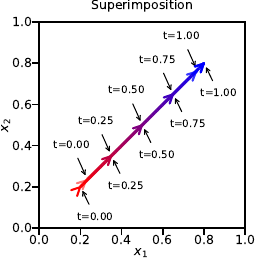}%
    \includegraphics{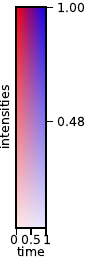}%
    \caption{Computed reconstruction with parameters
    $ (\alpha,\beta) = (0.4, 0,4)$.}
    \label{fig:1:recons}
  \end{subfigure}
\end{minipage}
  \caption{Reconstruction results for Experiment 1. 
  We use color to indicate position
  in time and transparency to indicate intensity of the respective atom.
  From left to right, we plot the employed ground-truth, 
  the obtained reconstruction with the specified parameters, and
then the superimposition of ground-truth  and reconstruction.}
  \label{fig:1:reconstruction_comparison}
\end{figure}

As common for Tikhonov regularization methods, 
the obtained reconstructions differ from the ground-truth due to the effect of regularization. %
Specifically, in Figure~\ref{fig:1:recons}, we notice a stronger effect of the regularization with parameters $\alpha=\beta=0.4$ at the endpoints of the reconstructed curve. Such phenomenon is expected: as argued in Section \ref{sec:insstepheu}, each curve found by Subroutine \ref{alg:multistart} belongs to the set of stationary points $\mathfrak{S}_F$; due to the optimality conditions proven in Proposition~\ref{cor:optimality}, any of such curves has zero initial and final speed, i.e., $\dot \gamma(0) = \dot \gamma(1) = 0$. The mentioned constraint is achieved with a slower transition for larger speed penalizations $\beta$.
We can quantify the discrepancy 
between the ground-truth curve $\gamma^\dagger$ and a reconstructed curve $\overline \gamma$ with respect to the $L^2$ norm by computing  
$D(\gamma^\dagger, \overline \gamma) := \norm{\gamma^{\dagger} - \overline \gamma}_{L^2}/\norm{\gamma^{\dagger}}_{L^2}$.
We obtain that $D(\gamma^\dagger, \overline \gamma) = 0.00515$ for
$\alpha=\beta = 0.1$  and
$D(\gamma^\dagger, \overline \gamma) = 0.017$ for $\alpha=\beta = 0.4$.
The reconstructed intensities for the parameter choices $\alpha=\beta = 0.1$ and $\alpha=\beta = 0.4$  are $87\%$ and $48\%$ of the 
ground truth's intensity respectively, as observed in Figure~\ref{fig:1:reconstruction_comparison}.

\subsubsection{Experiment 2 - Complex example with time varying measurements}
\label{example:complex}
The following example is given to showcase the full strength of the proposed
regularization and algorithm. The frequencies are sampled over lines through the origin of $\R^2$, which are rotating in time. Specifically, let $\Theta \in \N$ be a bound on the number of lines, $h > 0$ a fixed spacing between measured frequencies on a given line, and consider frequencies $S_i \in (\R^2)^{n_i}$ defined by  
\begin{equation} \label{eq:rot_sampl}
  S_{i,k} := 
  \begin{pmatrix}
    \cos( \theta_i )  & \sin(\theta_i) \\
    -\sin(\theta_i) & \cos(\theta_i)
  \end{pmatrix} 
  \begin{pmatrix}
    h (k-(n_i+1)/2) \\
    0
  \end{pmatrix},
  \quad i \in \{0,\ldots,50\},\ k \in \{1,\ldots,n_i\}\,.
\end{equation}
Here the matrix represents a rotation of angle $\theta_i$, with $\theta_i := \frac{i}{\Theta}\pi$. For such frequencies, we consider the associated forward operators $K_{t_i}^*$ as in \eqref{def:forward_numerics}.

In the presented experiment the parameters are chosen as $\Theta = 4$, $h=1$ and  $n_i = 15$ for all $i = 0,\ldots,50$: in other words, we sample along 4 different lines which rotate at each time-sample;
see Figure~\ref{fig:3:backprojection} for two examples of them.
The ground-truth is a measure $\mu^\dagger$ composed of 3 atoms, whose associated curves are as in Figure~\ref{fig:3:ground_truth}. 
Note that $\mu^\dagger$ displays:
different non-constant speeds, 
a contact point between two underlying curves,
a strong kink, and intensities equal to $1$.
We point out that equal intensities are considered for the sole purpose of easing graph visualization: the reconstruction quality is not
affected by different intensity choices. The noiseless data is defined by $f_{t_i}:=K_{t_i}^* \rho_{t_i}^\dagger$. Following the noisy model described in Section~\ref{subsubsec:noise}, 
we also consider data $f^{0.2}$ and $f^{0.6}$ with
added 20\% and 60\% of relative noise, respectively. 
For the noisy data we employed the same realization of the randomly generated noise vector 
$\nu$ defined in \eqref{eq:noise}.
In Figure~\ref{fig:3:backprojection} 
we present the backprojections for the noiseless and noisy data at two different times.
We can observe that at each time-sample
the backprojected data exhibits a line-constant 
behavior, as explained in Remark~\ref{rem:rotating_line_measurements} below.
\begin{figure}[t!]%
  \begin{subfigure}[t]{\textwidth}%
    \center
    \includegraphics{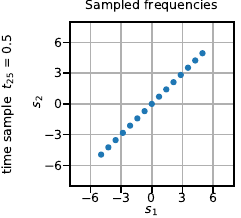}%
    \hspace{0.4in}%
    \includegraphics{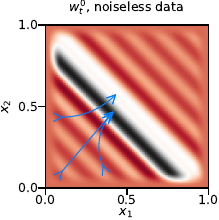}%
    \includegraphics{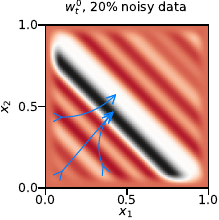}%
    \raisebox{0.14in}{%
      \includegraphics{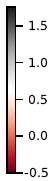}%
    }%
  \end{subfigure}%
  \\
  \begin{subfigure}[t]{\textwidth}%
    \center
    \includegraphics{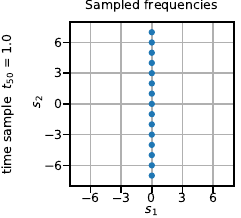}%
    \hspace{0.4in}%
    \includegraphics{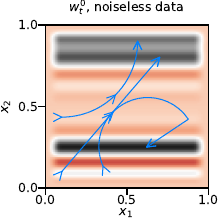}%
    \includegraphics{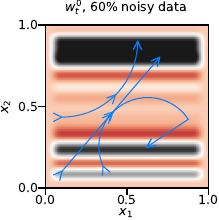}%
    \raisebox{0.14in}{%
    \includegraphics{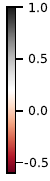}%
    }%
  \end{subfigure}%
  \caption{Sampled frequencies and backprojected data for  
  Experiment 2 at times $t_{25} = 0.5$, $t_{50} = 1$. 
  The backprojected data is displayed for noiseless data and for  
  20\% and 60\% of relative noise, i.e., $f^{0.2}$ and $f^{0.6}$ respectively, with superimposed ground-truth's curves in blue color.}
  \label{fig:3:backprojection}
\end{figure}

We apply Algorithm~\ref{alg:full} to obtain reconstructions
for the cases of noiseless and 20\% of added noise data 
 for parameters $\alpha=\beta=0.1$ (see Figure~\ref{fig:3:reconstruction_comparison});
in Figure~\ref{fig:3:reconstruction_comparison2},
we present the obtained reconstructions for the case of added 60\% noise, 
where we further show the regularization effects of employing larger 
$\alpha,\beta$ values, namely $\alpha=\beta=0.1$ and $\alpha=\beta=0.3$.

\begin{figure}[t!]
  \begin{subfigure}[c]{0.3\textwidth}
      \includegraphics{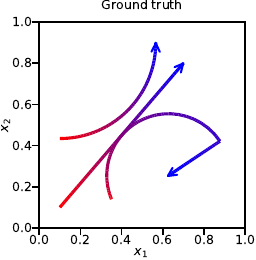}%
    \caption{Considered ground-truth. \\ \ }
    \label{fig:3:ground_truth}
  \end{subfigure}%
  \begin{minipage}[c]{0.7\textwidth}
  \begin{subfigure}[c]{\textwidth}
    \includegraphics{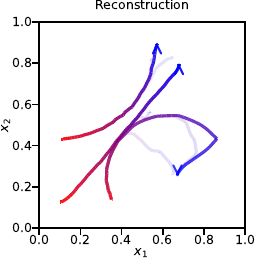}%
    \includegraphics{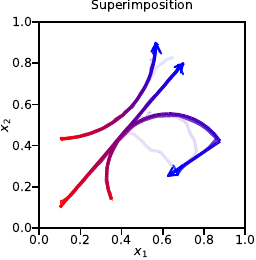}%
    \includegraphics{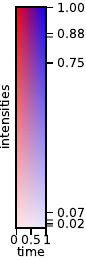}%
    \caption{Reconstruction with noiseless data.}
    \label{fig:3:reconsN00}
  \end{subfigure}
  \\[0.1in]
  \hspace{0.3\textwidth}%
  \begin{subfigure}[b]{\textwidth}
    \includegraphics{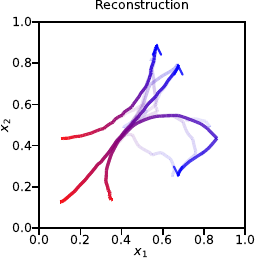}%
    \includegraphics{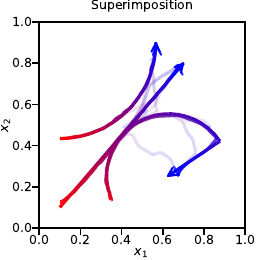}%
    \includegraphics{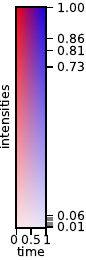}%
    \caption{Reconstruction with 20\% of relative noise.}
    \label{fig:3:reconsN20}
  \end{subfigure}
\end{minipage}
\caption{Reconstruction results for Experiment 2, with noiseless data, 20\% of
  relative noise data and parameters $ \alpha=\beta = 0.1$. The intensities of the reconstructed atoms 
are represented by the rightmost black and grey tick-lines in the colorbars.}
  \label{fig:3:reconstruction_comparison}
\end{figure}

In the noiseless case, presented in Figure~\ref{fig:3:reconsN00},
we can observe an accurate reconstruction, with some low
intensity artifacts that for most of the time share 
paths with the higher intensity atoms.
In Figure~\ref{fig:3:reconsN20}, where we add 20\% of noise to the data,  
we notice a surge of low intensity artifacts, but nonetheless, we see that 
the obtained solution is close, in the sense of measures, 
to the original ground-truth.
In Figure~\ref{fig:3:reconstruction_comparison2},  
for the case of 60\% added noise, we can notice that by increasing the regularization parameters
  the quality of the obtained reconstruction increases, displaying small regularization-induced
distortions. The examples in Figures~\ref{fig:3:reconstruction_comparison}, \ref{fig:3:reconstruction_comparison2}
demonstrate the power of the proposed regularization,
given its reconstruction accuracy when 
simultaneously employing highly ill-posed forward measurements, 
as pointed out in Remark~\ref{rem:rotating_line_measurements} below, together
with strong noise.

\begin{figure}[t!]
  \begin{subfigure}[c]{0.3\textwidth}
      \includegraphics{figures_ex3_recons_N00_ground_truth.pdf}%
    \caption{Considered ground-truth. \\ \ }
    \label{fig:3:ground_truth_v2}
  \end{subfigure}%
  \begin{minipage}[c]{0.7\textwidth}
  \begin{subfigure}[c]{\textwidth}
    \includegraphics{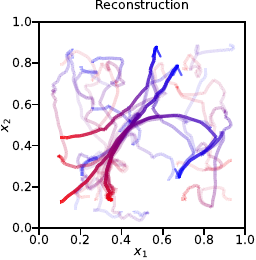}%
    \includegraphics{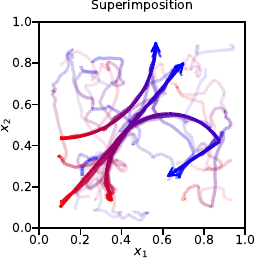}%
    \includegraphics{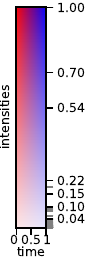}%
    \caption{Reconstruction with parameter choice $\alpha=\beta = 0.1$.}
    \label{fig:3:reconsN40}
  \end{subfigure}
  \\[0.1in]
  \hspace{0.3\textwidth}%
  \begin{subfigure}[b]{\textwidth}
    \includegraphics{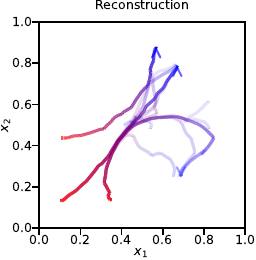}%
    \includegraphics{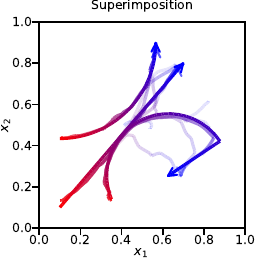}%
    \includegraphics{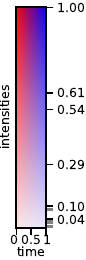}%
    \caption{Reconstruction with parameter choice $ \alpha=\beta = 0.3$.}
    \label{fig:3:reconsN40reg}
  \end{subfigure}
\end{minipage}
\caption{Reconstruction results for Experiment 2, with 60\% of relative noise. Algorithm~\ref{alg:full} is applied to the same data, with regularization parameter choices $\alpha=\beta=0.1$ and $\alpha=\beta=0.3$.}
  \label{fig:3:reconstruction_comparison2}
\end{figure}

We finalize this example by presenting the
convergence plots for the case of 60\% added noise, this being the most complex experiment (see Figure~\ref{fig:3:convergence}). We plot the numerical residual $\tilde{r}(\mu^n)$ and the numerical primal-dual gap $\tilde{G}(\mu^n)$ for the iterates $\mu^n$, where $\tilde{r}$ and $\tilde{G}$ are defined in Section \ref{subsec:conv_alg2}. 
We observe that the algorithm
exhibits a linear rate of convergence, instead of the proven sublinear one. Such linear convergence has been numerically observed in all the tested
examples. 
Additionally, 
we see that the algorithm is greatly accelerated when one considers strong 
regularization parameters $\alpha,\beta$. Finally, the plot confirms the efficacy of the proposed descent strategy for the insertion step \eqref{eq:sec5_ins}: indeed we note that the inequality $\tilde{r}(\mu^n)\leq \tilde{G}(\mu^n)$ holds for most of the iterations in the performed experiments. As such inequality is proven in Lemma \ref{lem:primal dual} for the actual residual and primal dual gap, %
we have confirmation that $\gamma^*_{N_n+1}$ in Algorithm \ref{alg:full} is a good approximated solution for \eqref{eq:sec5_ins}. 
Regarding execution times, iterations until convergence and number of restarts, they are summarized in Table~\ref{table:experiment_2}. 

\begin{figure}[t!]
    \center
    \includegraphics{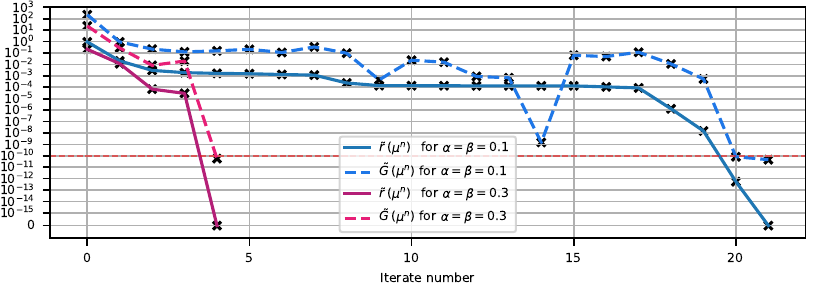}%
  \caption{
    Convergence plot for Experiment 2 with 60\% of added noise and parameter choices $\alpha=\beta=0.1$ and $\alpha=\beta=0.3$. 
    For each iterate,
    we plot the numerical residual $\tilde{r}(\mu^n)$, 
    together with the corresponding numerical dual gap $\tilde{G}(\mu^n)$.
}
  \label{fig:3:convergence}
\end{figure}

\begin{table}[t!]
  \begin{tabular}[h!]{|c|c|c|c|c|}
    \hline 
    \bf Relative Noise & $ \boldsymbol{(\alpha,\beta)}$ & \bf Restarts & \bf Iterations & \bf Execution time \\
    \hline
    0\% & $ (0.1,0.1)$ & 200 & 4 & 1.5 hours \\
    \hline
    20\% & $ (0.1,0,1)$ & 1000 & 7 & 5.8 hours \\
    \hline
    60\% & $ (0.1,0.1)$ & 10000 &  21 & 10.5 days \\
    \hline 
    60\% & $ (0.3,0.3)$ & 5000 & 4 & 16.8 hours  \\
    \hline
  \end{tabular} \\[1ex]
  \caption{
    Convergence information and execution times for Experiment 2. We display the
    considered relative noise level of the data, the employed regularization
    parameters $\alpha,\beta$, the number of restarts $N_{\rm max}$ in
    Subroutine~\ref{alg:multistart}, the number of iterations until reaching
    convergence, and the total execution time of  Algorithm~\ref{alg:full}.
    The employed CPU was an Apple M1 8 Core 3.2 GHz, running native arm64 Python 3.9.7.
     For comments on execution times, see Sections~\ref{sec:exectime}, \ref{subsec:conclusions}. }
     
  \label{table:experiment_2}
\end{table}

\begin{rem}
  \label{rem:rotating_line_measurements}
  Consider the Fourier-type forward measurements $K_t^*$ defined by \eqref{def:forward_numerics} with frequencies $S_{i} \in (\R^2)^{n_i}$ sampled along rotating lines, as in \eqref{eq:rot_sampl}.  
  In this case, at each fixed time-sample $t_i$, 
the operator $K_{t_i}^*$
 does not encode sufficient information 
to accurately resolve the location of the unknown ground-truth at time $t_i$. As a consequence, any static reconstruction technique,
that is, one that does not jointly employ information from different time samples in order to perform a reconstruction,
would not be able to accurately recover any ground-truth under these measurements.
To justify this claim,
notice that for all time-samples $t_i$, 
the family $\{S_{i,k}\}_{k=1}^{n_i}$ defined at \eqref{eq:rot_sampl} 
is collinear, and as such, there exists a corresponding vector $S_i^{\perp} \in \R^2$ such that $S_{i,k} \cdot S_{i}^{\perp} = 0$ for all $k = 1,\ldots, n_i$. 
Therefore, for any given time-static source $\rho^\dagger = \delta_{x^\dagger}$
with $x^\dagger \in (0.1,0.9)^2 \subset \R^2$, 
the measured forward data is invariant 
along $S_{i}^{\perp}$, that is, 
\begin{equation*}
  K_{t_i}^* \delta_{x^\dagger} = K_{t_i}^* \delta_{x^\dagger + \lambda S_{i}^{\perp}}\,,
  \quad  \text{ for all } \, \lambda \in \R \, \text{ such that } \, x^\dagger + \lambda S_{i}^{\perp}
  \in (0.1,0.9)^2.
\end{equation*}
Hence, solely with the information of a single time-sample, 
it is not possible to distinguish a source along a line, and therefore,
it is not possible to accurately resolve it. This is in contrast with the dynamic model presented in this paper, which is able to perform an efficient reconstruction, as demonstrated in Experiment 2. 
\end{rem}

\subsubsection{Experiment 3 - Crossing example} \label{subsec:Ex3}
The following is an example in which the considered model 
is not able to track dynamic sources: although the reconstruction is close to the ground truth in the sense of measures, 
its underlying curves do not resemble those of the ground truth. This effect is due to the non-injectivity of the map at \eqref{eq:map}: even if the sparse measure we wish to recover is unique, its decomposition into atoms might not be. A simple example in which injectivity fails is given by the crossing of two curves (see Remark \ref{rem:crossing}): this is the subject of the numerical experiment performed in this section. 
Specifically, the ground-truth $\mu^\dagger$ considered is of the form 
\begin{equation}   \label{eq:ex2:curves}
\begin{gathered}
\mu^\dagger := a_{\gamma_1^\dagger}^{-1} \rho_{ \gamma_1^\dagger} +
a_{\gamma_2^\dagger}^{-1} \rho_{\gamma_2^\dagger}\,, \\	
\gamma_1^\dagger(t) := (0.2,0.2)+ t(0.6, 0.6), \qquad
 \gamma_2^\dagger(t) := (0.8,0.2)+  t(-0.6, 0.6)\,.
\end{gathered}
\end{equation}
Notice that $\gamma_1^\dagger$ and $\gamma_2^\dagger$ cross at time $t=0.5$, and the respective atoms have both intensity 1.
For the forward measurements, we employ the time-constant Archimedean spiral
family of frequencies defined in the first numerical experiment in Section \ref{subsec:Ex1}, resulting in the constant in time  operator $K_{t_i}^*=K^*$. The reconstruction is performed for noiseless data $f_{t_i}:=K^*\rho^\dagger_{t_i}$. In Figure~\ref{fig:2:data} 
we present the considered frequency samples, 
together with the backprojected data at selected time samples. 
\begin{figure}[t!]
  \begin{subfigure}[t]{0.22\textwidth}
      \includegraphics{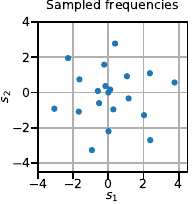}%
  \end{subfigure}\hspace{0.005\textwidth}%
  \begin{subfigure}[t]{0.775\textwidth}
    \includegraphics{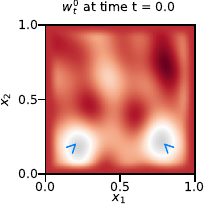}%
    \includegraphics{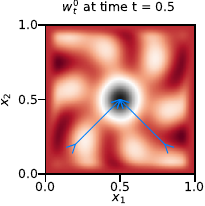}%
    \includegraphics{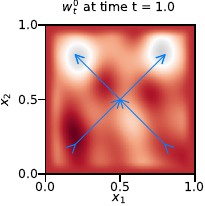}%
    \raisebox{0.14in}{%
      \includegraphics{figures_ex1_dual_var_colorbar.pdf}%
    }%
  \end{subfigure}
  \caption{Time-constant frequency samples $\{S_{i,k}\}_{k}$ and corresponding backprojected data
    $w_t^0$ for Experiment 3. 
    The backprojected data is taken in the noiseless case, and 
    presented at times $t_0 = 0,\ t_{25} = 0.5,\ t_{50} = 1$, with 
    associated ground-truth superimposed in blue color.}
  \label{fig:2:data}
\end{figure}
\begin{figure}[t!]
  \begin{subfigure}[c]{0.3\textwidth}
      \includegraphics{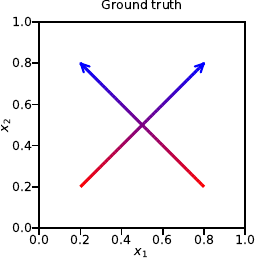}%
    \caption{Considered ground-truth.}
    \label{fig:2:ground_truth}
  \end{subfigure}%
  \begin{subfigure}[c]{0.7\textwidth}
    \includegraphics{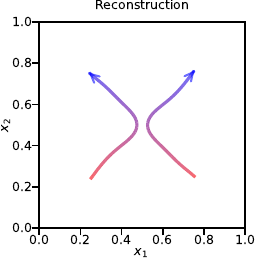}%
    \includegraphics{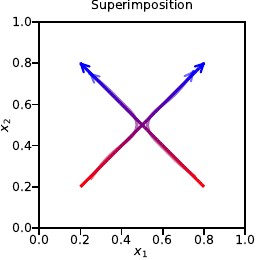}%
    \includegraphics{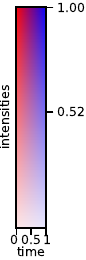}%
    \caption{Computed reconstruction with parameters 
    $(\alpha,\beta) = (0.5,0.5)$.}
    \label{fig:2:recons}
  \end{subfigure}
  \caption{Reconstruction results for Experiment 3. In this case the method fails to reconstruct the crossing, but still approximates the ground-truth in terms of measures.}
  \label{fig:2:reconstruction_comparison}
\end{figure}

In Figure~\ref{fig:2:reconstruction_comparison} we display 
the obtained reconstruction for high regularization parameters $\alpha=0.5$ and $\beta=0.5$. 
It is observed that the reconstructed atoms 
are not close, as curves, to the ones in \eqref{eq:ex2:curves}:  rather than a crossing at time $t=0.5$, the two curves rebound.  
As already mentioned, this phenomenon is a consequence of the lack of uniqueness for the sparse representation of $\mu^\dagger$, which in this case is both represented by crossing curves and rebounding curves. The fact that Algorithm \ref{alg:full} outputs rebounding curves is due to employed regularization: indeed the considered Benamou-Brenier-type penalization selects a solution
whose squared velocity is minimized, which discourages the reconstruction to follow the crossing path. However, we remark that the algorithm proposed yields a good solution in terms of the model, given that, in the sense of measures, the obtained
reconstruction is very close to the ground truth $\mu^\dagger$. More sophisticated models are needed in order to resolve the crossing, as briefly discussed in Section \ref{sec:perspectives}.

\subsection{Remark on execution times}\label{sec:exectime}
It is observed that the execution times of our algorithm are quite high for
some of the presented examples in Table \ref{table:experiment_2}. This is
mainly due to the computational cost of the insertion step, since the algorithm
is set to run several gradient descents to find a global minimum of the
linearized problem \eqref{eq:aux4} at each iteration, and these descents are
executed in a non-parallel fashion on a single CPU core.  The other components
of the algorithm, namely the routines \texttt{sample} and \texttt{crossover}
included in Subroutine \ref{alg:multistart}, the coefficient optimization step
and the sliding step, have, in comparison, negligible computational cost. In
particular, the sliding step grows
in execution time with the number of active atoms, but this effect appears 
towards the last iterations of the algorithm, and it is shadowed by
the insertion step, whose gradient descents become longer as the iterate is
getting closer to the optimal value.  As a confirmation of the role of the
insertion step in the overall computational cost, one can see that the
execution times of the algorithm linearly depend on the total number of
gradient descents that are run in each example. Indeed, since the total number
of gradient descents is given by the number of restarts multiplied by the
number of iterations (see Table \ref{table:experiment_2}), the ratio
between the execution times and the total number of gradient descents
is of the same order for all the presented examples (between \(0.0008\) and \(0.0019\) hour/gradient descent).
It is worth pointing out that the multistart gradient descent
is a highly parallelizable method. Some early tests in this direction indicate
that much lower computational times are achievable by simultaneously computing
gradient descents on several GPUs for the presented examples.

\subsection{Conclusions and discussion} \label{subsec:conclusions}
The presented numerical experiments confirm the effectiveness of Algorithm~\ref{alg:full}, and that the proposed Benamou-Brenier-type energy is an excellent candidate to regularize dynamic inverse problems. 
This is in particular evidenced by Experiment 2 in Section \ref{example:complex}, where we consider
a dynamic inverse problem that is impossible to tackle with a purely static approach, as discussed in Remark \ref{rem:rotating_line_measurements}. Even in the extreme case of $60$\% 
added noise, our method recovered satisfactory reconstructions.

We can further observe the distortions induced 
by the considered regularization.
Precisely, the attenuation of the reconstruction's velocities and intensities is a direct consequence of the
minimization of the Benamou-Brenier energy and the
total variation norm in the objective functional; for this reason, the choice of the regularization parameters $\alpha$ and $\beta$ affects the reconstruction and the magnitude of such distortions.
Additionally, a further effect of the regularization %
is the phenomenon presented in Experiment 3 (Section \ref{subsec:Ex3}). 
As the dynamic inverse problem is formulated in the space of measures, if the sparse ground truth possesses many different decompositions into extremal points, the preferred reconstruction may be the one favoring the regularizer.  
Finally, concerning the execution times, we emphasize that the presented simulations are a proof of concept and not the result of a carefully optimized algorithm. There are many improvement directions, where the most promising one is a GPU implementation to parallelize the multiple insertion step. To increase the likelihood of finding a global minimizer for the
insertion step, Subroutine~\ref{alg:multistart} was employed with a high
number of restarts $N_{\rm max}$, which was tuned manually, prioritizing high reconstruction accuracy over execution time. To improve on this aspect, one could include early stopping conditions in Subroutine~\ref{alg:multistart}, for example by exiting the routine when a sufficiently high ratio of starts descend towards the same stationary curve. Last, 
the code was written having in mind readibility, as well as adaptability to 
a broad class of inverse problems. 
Therefore, the experienced execution times are not an accurate estimation
of what would be possible in specific applications.

\section{Future perspectives}\label{sec:perspectives}
In this section we propose several research directions to expand on the research presented in this paper. %
A first relevant question concerns the proposed DGCG algorithm, and, in particular, the possibility of proving a theoretical linear convergence rate under suitable structural assumptions on the minimization problem \eqref{eq:prel_main_prob}. Linear convergence has been recently proven for the GCG method applied to the BLASSO problem
 \cite{flinth2020,pieper2020}. It seems feasible to extend such an analysis to the DGCG algorithm presented in this paper, especially seeing the linear convergence observed in the experiments provided in  Section \ref{subsec:experiments}, and the fact that our proof of sublinear convergence (see Theorem \ref{thm:convergence}) does not fully exploit the coefficients optimization step, as commented in Remark~\ref{rem:convergence}.  This line of research is currently under investigation by the authors \cite{inprep2}.

Another interesting research direction is the extension of the DGCG method introduced in this paper to the case of unbalanced optimal transport. Precisely, one can regularize the inverse problem \eqref{intro:dip} by replacing the Benamou-Brenier energy $B$ in  \eqref{intro:inverseproblem} with the so-called Wasserstein-Fischer-Rao energy, as proposed in \cite{bf}. Such energy, first introduced in \cite{chizat,kmv,liero} as a model for unbalanced optimal transport, accounts for more general  displacements $t \mapsto \rho_t$, %
in particular allowing the total mass of $\rho_t$ to vary during the evolution.  %
The possibility to numerically treat such a problem with conditional gradient methods would rest on the characterization of the extremal points for the Wasserstein-Fischer-Rao energy recently achieved by the authors in \cite{superposition}. 

In addition, it is a challenging open problem to design alternative dynamic regularizers that allow to reconstruct accurately a ground-truth composed of crossing atoms, such as the ones considered in the experiment in Section \ref{subsec:Ex3}. Due to the fact that the considered Benamou-Brenier-type regularizer penalizes the square of the velocity field associated to the measure, the reconstruction obtained by our DGCG algorithm does not follow the crossing route (Figure \ref{fig:2:recons}).
A possible solution is to consider additional high-order  regularizers in \eqref{eq:prel_main_prob}, such as curvature-type penalizations. The challenging part is devising a penalization that can be enforced at the level of Borel measures, and whose extremal points are measures concentrated  on sufficiently regular curves.

Finally, keeping into account the possible improvements discussed in Section \ref{subsec:conclusions}, the implementation of an accelerated and parallelized version of Algorithm \ref{alg:full} will be the subject of future work.

\section*{Acknowledgements}

KB and SF gratefully acknowledge support by the Christian Doppler Research Association (CDG) and Austrian Science Fund (FWF) through the Partnership in Research
project PIR-27 ``Mathematical methods for motion-aware medical imaging'' and project P 29192 ``Regularization graphs for variational imaging''.
MC is supported by the Royal Society (Newton International Fellowship NIF\textbackslash R1\textbackslash 192048). %
The Institute of Mathematics and Scientific Computing, to which KB, SF, FR are affiliated, is a member of NAWI Graz (\url{http://www.nawigraz.at/}). The authors  KB, SF, FR are further members of/associated with BioTechMed Graz (\url{https://biotechmedgraz.at/}).
This version of the article has been accepted for publication, after
peer review but is not the Version of Record and does not reflect post-acceptance improvements, or any corrections. The Version of Record is available online at: http://dx.doi.org/10.1007/s10208-022-09561-z.

    \bibliographystyle{my_plain}
    \bibliography{bibliography.bib}

\appendix
\section{}

\subsection{Lemmas on optimal transport regularization} \label{subsec:properties_benamou}

In this section we recall several results concerning the continuity equation \eqref{cont weak}, the functionals $B$ and $J_{\alpha,\beta}$ introduced at \eqref{intro convex} and  \eqref{prel:reg} respectively, and the data spaces $L^2_H$ at \eqref{def:ltwoh}. For proofs of such results we refer the reader to Propositions 2.2, 2.4 and Lemmas 4.2, 4.5, 4.6 in \cite{bf}, and to Proposition 5.18 in \cite{sant}.

\begin{lem}[Properties of the continuity equation] \label{lem:prop cont}
	Assume that $\mu=(\rho,m) \in \M$ satisfies \eqref{cont weak} and that $\rho \in \M^+(X)$. Then $\rho$ disintegrates with respect to time into $\rho =dt \otimes  \rho_t$, 
	where $\rho_t \in \M^+(\overline{\Om})$ for a.e.~$t$, and $t \mapsto \rho_t(\overline{\Om})$ is constant, with $\rho_t(\overline{\Om}) = \rho(X)$ for a.e. $t \in (0,1)$. Moreover $t \mapsto \rho_t$ belongs to $\pcurves$ if, in addition, $m=v\rho$ for some measurable $v \colon X \to \R^d$ such that 
	\[
	\int_0^1\int_{\olom} |v(t,x)| \, d\rho_t(x) \, dt < +\infty\,.
	\] 
\end{lem}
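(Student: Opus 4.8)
The plan is to proceed in three stages: first establish the disintegration $\rho = dt \otimes \rho_t$ together with the constancy of the mass, and then, under the additional structural hypothesis on $m$, upgrade the measurable family $\{\rho_t\}$ to a narrowly continuous one.

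First I would produce the disintegration. Since $\rho \in \M^+(X)$ is a finite positive measure on $X = (0,1) \times \olom$, let $\pi \colon X \to (0,1)$ be the projection onto the time coordinate and set $\nu := \pi_\# \rho \in \M^+((0,1))$. The disintegration theorem (see \cite{afp}) yields a $\nu$-a.e.\ uniquely determined Borel family $\{\bar\rho_t\}_{t}$ of probability measures on $\olom$ with $\rho = \int \bar\rho_t \, d\nu(t)$. To identify $\nu$, I would test \eqref{cont weak} against functions of the form $\f(t,x) = \eta(t)$ with $\eta \in C^\infty_c((0,1))$: these lie in $C_c^\infty(X)$ because $\olom$ is compact, and since $\nabla\f = 0$ and $\de_t\f = \eta'$, the continuity equation reduces to $\int_0^1 \eta'(t)\,d\nu(t) = 0$ for all such $\eta$, i.e.\ $\nu' = 0$ in the sense of distributions on $(0,1)$. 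Hence $\nu = c\,dt$ for some constant $c \geq 0$, and comparing total masses gives $c = \nu((0,1)) = \rho(X)$. Setting $\rho_t := \rho(X)\,\bar\rho_t$ then produces $\rho = dt \otimes \rho_t$ with $\rho_t \in \M^+(\olom)$ and $\rho_t(\olom) = \rho(X)$ for a.e.\ $t$, which are the first two assertions.

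Next I would obtain the narrow continuity. Writing $m = v\rho = dt \otimes (v(t,\cdot)\rho_t)$ and testing \eqref{cont weak} against $\f(t,x) = \eta(t)\psi(x)$ with $\eta \in C^\infty_c((0,1))$ and $\psi \in C^\infty(\olom)$ fixed, I obtain
\[
\int_0^1 \eta'(t)\, g_\psi(t)\, dt = -\int_0^1 \eta(t)\, h_\psi(t)\, dt,
\]
where $g_\psi(t) := \int_{\olom}\psi\, d\rho_t$ and $h_\psi(t) := \int_{\olom} \nabla\psi \cdot v(t,\cdot)\, d\rho_t$. The bound $|h_\psi(t)| \leq \norm{\nabla\psi}_\infty \int_{\olom} |v(t,\cdot)|\,d\rho_t$ together with the integrability hypothesis on $v$ gives $h_\psi \in L^1((0,1))$, so $g_\psi \in W^{1,1}((0,1))$ and thus admits an absolutely continuous representative. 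I would then fix a countable set $\{\psi_k\} \subset C^\infty(\olom)$ dense in $C(\olom)$; discarding a single Lebesgue-null set $N$, the maps $t \mapsto g_{\psi_k}(t)$ are simultaneously continuous on $(0,1)\setminus N$. Using the uniform mass bound $\rho_t(\olom) = \rho(X)$ and the estimate $|g_\psi(t) - g_{\psi_k}(t)| \leq \norm{\psi - \psi_k}_\infty\,\rho(X)$, every $t \mapsto g_\psi(t)$ is a uniform limit of continuous functions on $(0,1)\setminus N$, hence extends continuously; the same bound lets me define, for every $t$, a narrow limit $\tilde\rho_t$ of $\rho_{t_n}$ along any $t_n \to t$ with $t_n \notin N$, independent of the chosen sequence. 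This yields a narrowly continuous representative $t \mapsto \tilde\rho_t \in \M^+(\olom)$ coinciding with $\rho_t$ for a.e.\ $t$, i.e.\ $t \mapsto \rho_t \in \pcurves$.

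The routine parts are the disintegration and the distributional identification of $\nu$; the main obstacle is the last step, passing from the scalar $W^{1,1}$-regularity of $t \mapsto \int\psi\,d\rho_t$ for each fixed smooth $\psi$ to genuine narrow continuity of the measure-valued curve. This requires the separability of $C(\olom)$ to route all test functions through a single null set, the uniform total-variation bound to control the approximation $\psi_k \to \psi$ and to guarantee existence of the narrow limits, and some care in verifying that the limiting measures are well defined independently of the approximating time sequence. One must also confirm that smooth functions are dense in $C(\olom)$ on the closed domain, which is the only point where the geometry of $\Omega$ enters, and only mildly.
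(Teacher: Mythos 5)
Your proof is correct and follows essentially the same route as the sources the paper cites for this lemma (the paper itself defers to \cite{bf} and \cite[Prop.~5.18]{sant} rather than proving it): disintegrate, identify the time marginal as Lebesgue by testing against functions of $t$ alone, then upgrade to narrow continuity via the $W^{1,1}$ regularity of $t \mapsto \int \psi\, d\rho_t$, a countable dense family of test functions, and the uniform mass bound. No gaps.
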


\begin{lem}[Properties of $B$] \label{lem:prop B}
	The functional $B$ defined in \eqref{intro convex} is non-negative, convex, one-homogeneous and sequentially lower semicontinuous with respect to the weak* topology on $\M$. Moreover the following properties hold:
	\begin{enumerate}
		\item [i)] if $B(\rho,m)<+\infty$, then $\rho \geq 0$ and $m \ll \rho$, that is, there exists a measurable map $v \colon X \to \R^d$ such that $m=v\rho$,
	\item [ii)] let $\Psi$ be the map at \eqref{intro Psi}. If $\rho \geq 0$ and $m = v \rho$ for some $v \colon X \to \R^d$ measurable, then
\begin{equation} \label{formula B}
B(\rho,m) =\int_X \Psi (1,v) \, d\rho =  \frac12 \int_X |v|^2 \, d\rho \,.
\end{equation}
	\end{enumerate}
\end{lem}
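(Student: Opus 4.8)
The plan is to read off every assertion from the corresponding pointwise property of the integrand $\Psi$ at \eqref{intro Psi}, together with the Radon--Nikodym representation \eqref{intro convex} of $B$, reserving a duality argument for the lower semicontinuity, which is the only delicate point. Recall that \eqref{intro convex} does not depend on the choice of the dominating measure $\lambda$, by the one-homogeneity of $\Psi$; throughout I would fix, for a given pair $\mu=(\rho,m)$, the reference measure $\lambda:=\abs{\rho}+\abs{m}\in\M^+(X)$, so that $f:=d\rho/d\lambda$ and $g:=dm/d\lambda$ are well defined and $B(\mu)=\int_X\Psi(f,g)\,d\lambda$.

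First I would dispatch the three algebraic properties. Non-negativity is immediate, since $\Psi\geq0$. For one-homogeneity, fix $c>0$ and note that $\lambda$ still dominates $c\rho$ and $cm$, with densities $cf$ and $cg$; one-homogeneity of $\Psi$ then gives $B(c\mu)=\int_X\Psi(cf,cg)\,d\lambda=c\int_X\Psi(f,g)\,d\lambda=cB(\mu)$. For convexity, given $\mu_1,\mu_2$ and $\theta\in[0,1]$, I would choose a single $\lambda$ dominating all four measures; the densities of $\theta\mu_1+(1-\theta)\mu_2$ are the corresponding convex combinations of densities, and the pointwise convexity of $\Psi$ yields the inequality under the integral sign.

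Next I would prove \emph{(ii)} and \emph{(i)}, which follow from the explicit form of $\Psi$. For \emph{(ii)}, if $\rho\geq0$ and $m=v\rho$ I take $\lambda:=\rho$ (the case $\rho=0$ forces $m=0$ and $B=0$, trivially), so that $f\equiv1$ and $g=v$; since $t=1>0$ one has $\Psi(1,v)=\abs{v}^2/2$, giving \eqref{formula B}. For \emph{(i)}, suppose $B(\mu)<+\infty$ with $\lambda=\abs{\rho}+\abs{m}$; then $\Psi(f,g)<+\infty$ for $\lambda$-a.e.\ point, and inspection of \eqref{intro Psi} shows this forces $f\geq0$ and moreover $g=0$ on $\{f=0\}$, both $\lambda$-a.e. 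Hence $f\geq0$ $\lambda$-a.e., i.e.\ $\rho\geq0$. To obtain $m\ll\rho$, take any Borel $A$ with $\rho(A)=0$: then $f=0$ $\lambda$-a.e.\ on $A$, whence $g=0$ $\lambda$-a.e.\ on $A$ and $m(A)=\int_A g\,d\lambda=0$; the Radon--Nikodym theorem then produces the measurable field $v$ with $m=v\rho$.

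The main obstacle is the weak* lower semicontinuity, for which I would use the dual (support-function) representation of $\Psi$. A direct computation of the Legendre transform gives, for all $(t,x)\in\R\times\R^d$,
\begin{equation*}
\Psi(t,x)=\sup_{b\in\R^d}\Bigl(b\cdot x-\tfrac12\abs{b}^2\,t\Bigr)
=\sup\bigl\{\,at+b\cdot x \ : \ (a,b)\in\R\times\R^d,\ a+\tfrac12\abs{b}^2\leq0\,\bigr\}\,,
\end{equation*}
which identifies $\Psi$ as the support function of the closed convex set $K:=\{(a,b):a+\tfrac12\abs{b}^2\leq0\}$. Lifting this to the level of measures, I would claim
\begin{equation*}
B(\rho,m)=\sup\Bigl\{\int_X a\,d\rho+\int_X b\cdot dm \ : \ (a,b)\in C_0(X)\times C_0(X;\R^d),\ a+\tfrac12\abs{b}^2\leq0 \text{ on } X\Bigr\}\,.
\end{equation*}
The inequality ``$\geq$'' is elementary: for any admissible $(a,b)$ the pointwise bound $af+b\cdot g\leq\Psi(f,g)$ integrates to $\int_X a\,d\rho+\int_X b\cdot dm\leq B(\rho,m)$. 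The reverse inequality is the technical heart and requires a Reshetnyak-type approximation: using a measurable selection of near-optimal $(a,b)$ in the pointwise supremum together with a density argument in $C_0$, one constructs admissible continuous pairs whose linear functionals approach $B(\rho,m)$. Granting the representation, each functional $(\rho,m)\mapsto\int_X a\,d\rho+\int_X b\cdot dm$ with $(a,b)\in C_0$ is weak* continuous on $\M$, so $B$ is a pointwise supremum of weak* continuous functionals and is therefore weak* sequentially lower semicontinuous. The delicate step, and the one I would write out most carefully, is the verification of this dual representation.
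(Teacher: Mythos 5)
Your proposal is correct, and it is essentially the route behind the paper's own treatment: the paper does not prove Lemma \ref{lem:prop B} in the text but defers to \cite[Propositions 2.2, 2.4]{bf} and \cite[Proposition 5.18]{sant}, where the lower semicontinuity rests precisely on your dual representation of $\Psi$ as the support function of the parabolic set $K=\{(a,b)\,:\,a+\tfrac12|b|^2\leq 0\}$, exhibiting $B$ as a supremum of weak*-continuous linear functionals; your treatment of the algebraic properties and of items (i), (ii) via the pointwise structure of $\Psi$ and Radon--Nikodym is likewise the standard one and is sound. The one step you leave as a sketch, the inequality ``$\leq$'' in the measure-level duality formula, is indeed the technical heart, but the clean way to close it is not a measurable-selection-plus-Lusin argument: one takes a countable dense family $(a_n,b_n)$ in $K$, writes $\int_X \Psi(f,g)\,d\lambda$ as a supremum over finite Borel partitions of the corresponding piecewise-linear lower bounds, and then replaces indicator functions by a continuous partition of unity $\{\varphi_n\}$, noting that $\bigl(\sum_n\varphi_n a_n,\sum_n\varphi_n b_n\bigr)$ remains admissible by convexity of $K$ (and stays admissible after multiplication by a cut-off $0\leq\chi\leq 1$, since $\chi a+\tfrac12\chi^2|b|^2\leq\chi(a+\tfrac12|b|^2)\leq 0$, which handles the $C_0$ requirement); this is the argument in the cited literature and would complete your outline.
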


\begin{lem}[Properties of $J_{\alpha,\beta}$] \label{lem:prop J}
Let $\alpha, \beta >0$. The functional $J_{\alpha,\beta}$ at \eqref{prel:reg} is non-negative, convex, one-homogeneous and sequentially lower semicontinuous with respect to weak* convergence on $\M$.
		For $\mu=(\rho,m) \in \mathcal{M}$ such that $J_{\alpha,\beta}(\mu)< +\infty$ we have that 
	\begin{equation} \label{lem:prop J est}
       \max\{   \alpha \norm{\rho}_{\M(X)},  C \norm{m}_{\M(X;\R^d)}\} \leq
          J_{\alpha,\beta}(\mu) \,,
	\end{equation}	
	where $C:=\min\{2\alpha,\beta\}$. 
	Moreover, if $\{\mu^n\}$ sequence in $\M$ is such that
$\{J_{\alpha,\beta} (\mu^n)\}$ is uniformly bounded,
then $\rho^n = dt \otimes \rho_t^n$ for some $(t \mapsto \rho_t^n) \in \pcurves$, and there exists 
	$\mu=(\rho,m) \in \mathcal{D}$ with $\rho= dt \otimes \rho_t$ and $(t\mapsto \rho_t) \in \pcurves$, such that, up to subsequences, 
	\begin{equation} \label{topology} \left\{
	\begin{gathered}
	(\rho^n,m^n) \weakstar (\rho,m) \,\, \text{ weakly* in } \,\, \M \,, \\
	\rho_t^n \weakstar \rho_t \,\, \text{ weakly* in }  \,\, \M(\olom)\,,\, \text{ for every } \,\, t \in [0,1]\,.   
	\end{gathered} \right.
	\end{equation}
\end{lem}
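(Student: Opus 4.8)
The plan is to establish the three clusters of assertions in turn, in each case reducing to the corresponding properties of $B$ (Lemma~\ref{lem:prop B}) and of the continuity equation (Lemma~\ref{lem:prop cont}).

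\emph{Structural properties.} Non-negativity, convexity and one-homogeneity of $J_{\alpha,\beta}$ follow at once from those of $B$ and of the total variation norm $\norm{\cdot}_{\M(X)}$, together with the observation that $\calD$ is a convex cone, since the continuity equation \eqref{cont weak} is linear in $(\rho,m)$. For weak* sequential lower semicontinuity I would first argue that $\calD$ is weak* sequentially closed: for each fixed $\varphi\in C^\infty_c(X)$ the maps $(\rho,m)\mapsto\int_X\partial_t\varphi\,d\rho$ and $(\rho,m)\mapsto\int_X\nabla\varphi\cdot dm$ are weak* continuous, so \eqref{cont weak} passes to weak* limits and the indicator of $\calD$ is weak* sequentially lsc. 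Adding it to $\beta B+\alpha\norm{\cdot}_{\M(X)}$, both weak* sequentially lsc, yields the lower semicontinuity of $J_{\alpha,\beta}$.

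\emph{Norm bounds.} Assume $J_{\alpha,\beta}(\mu)<+\infty$, so $\mu\in\calD$ and $B(\mu)<\infty$. The bound $\alpha\norm{\rho}_{\M(X)}\le J_{\alpha,\beta}(\mu)$ is immediate from $B\ge 0$. For the momentum, Lemma~\ref{lem:prop B} gives $\rho\ge 0$, $m=v\rho$ and $B(\mu)=\tfrac12\int_X|v|^2\,d\rho$, whence by Cauchy--Schwarz $\norm{m}_{\M(X;\R^d)}=\int_X|v|\,d\rho\le\sqrt{2B(\mu)}\,\sqrt{\norm{\rho}_{\M(X)}}$. Applying the weighted inequality $\sqrt{ab}\le\tfrac12(a/\lambda+\lambda b)$ and optimizing over the admissible range of $\lambda$, which is nonempty precisely because $C=\min\{2\alpha,\beta\}$ satisfies $C^2\le 2\alpha\beta$, produces $C\norm{m}_{\M(X;\R^d)}\le\beta B(\mu)+\alpha\norm{\rho}_{\M(X)}=J_{\alpha,\beta}(\mu)$, the remaining half of \eqref{lem:prop J est}.

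\emph{Compactness.} Suppose $\sup_n J_{\alpha,\beta}(\mu^n)=:\Lambda<\infty$. For each $n$, finiteness forces $\mu^n\in\calD$ and $B(\mu^n)<\infty$, so Lemma~\ref{lem:prop B} yields $\rho^n\ge 0$ and $m^n=v^n\rho^n$ with $\int_X|v^n|\,d\rho^n\le\sqrt{2B(\mu^n)}\sqrt{\norm{\rho^n}_{\M(X)}}<\infty$; Lemma~\ref{lem:prop cont} then gives $\rho^n=dt\otimes\rho^n_t$ with $(t\mapsto\rho^n_t)\in\pcurves$. The estimate \eqref{lem:prop J est} bounds $\norm{\rho^n}_{\M(X)}$ and $\norm{m^n}_{\M(X;\R^d)}$ uniformly, so by the sequential Banach--Alaoglu theorem we extract a subsequence with $(\rho^n,m^n)\weakstar(\rho,m)$; weak* closedness of $\calD$ gives $\mu\in\calD$, lower semicontinuity of $B$ gives $B(\mu)<\infty$, and Lemmas~\ref{lem:prop B}, \ref{lem:prop cont} provide $\rho=dt\otimes\rho_t$ with $(t\mapsto\rho_t)\in\pcurves$. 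This settles everything except the pointwise-in-time statement, which I expect to be the main obstacle: upgrading the global convergence $dt\otimes\rho^n_t\weakstar dt\otimes\rho_t$ to $\rho^n_t\weakstar\rho_t$ for \emph{every} $t\in[0,1]$. The strategy is to disintegrate $m^n=dt\otimes m^n_t$ and use the continuity equation to bound, for $\varphi\in C^1(\olom)$, $|\langle\rho^n_t-\rho^n_s,\varphi\rangle|\le\norm{\nabla\varphi}_\infty\int_s^t|m^n_\tau|(\olom)\,d\tau\le\norm{\nabla\varphi}_\infty\sqrt{2B(\mu^n)}\sqrt{M_n|t-s|}$, where $M_n:=\rho^n(X)$ is constant in time and bounded by $\Lambda/\alpha$. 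This $1/2$-Hölder equicontinuity, uniform in $n$, together with a countable dense family of test functions, an Arzel\`a--Ascoli extraction and a diagonal argument, produces a subsequence along which $\rho^n_t$ converges weakly* for every $t$ to a narrowly continuous limit; testing against products $\psi(t)\varphi(x)$ identifies this limit with $\rho_t$ through the already-established convergence of $dt\otimes\rho^n_t$, giving the second line of \eqref{topology}.
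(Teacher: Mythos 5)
Your proof is correct and self-contained, and it follows essentially the same route as the source: note that the paper itself gives no proof of this lemma, deferring to Propositions 2.2, 2.4 and Lemmas 4.2, 4.5, 4.6 of \cite{bf} and Proposition 5.18 of \cite{sant}, and your argument reconstructs exactly those proofs. In particular, your two key steps check out: the weighted AM--GM optimization is admissible precisely because $C=\min\{2\alpha,\beta\}$ satisfies $C^2\leq 2\alpha\beta$, yielding $C\norm{m}_{\M(X;\R^d)}\leq \beta B(\mu)+\alpha\norm{\rho}_{\M(X)}$; and the uniform $\nicefrac{1}{2}$-H\"older bound $|\langle \rho^n_t-\rho^n_s,\f\rangle|\leq \norm{\nabla\f}_\infty\sqrt{2B(\mu^n)}\sqrt{M_n|t-s|}$ (valid for all $s,t$ since $t\mapsto\langle\rho^n_t,\f\rangle$ is absolutely continuous with derivative $\langle m^n_t,\nabla\f\rangle$, by narrow continuity of $t \mapsto \rho^n_t$), combined with the constant mass $M_n\leq \Lambda/\alpha$, a countable dense family in $C(\olom)$, Arzel\`a--Ascoli and a diagonal extraction, plus the fact that two narrowly continuous disintegrations agreeing for a.e.~$t$ agree for every $t$, is precisely the standard mechanism for upgrading $dt\otimes\rho^n_t\weakstar dt\otimes\rho_t$ to pointwise-in-time weak* convergence.
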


 \begin{lem}[Properties of $K_t^*$] \label{lem:prop K}
  Assume \ref{ass:H1}-\ref{ass:H3}, \ref{ass:K1}-\ref{ass:K3} as in Section \ref{sec:assumptions}. If $t \mapsto \rho_t$ is in $\curves$, then the map $t \mapsto K_t^* \rho_t$ belongs to $\Ltwo$. Let $\{(\rho^n,m^n)\}$ in $\mathcal{D}$ be such that $\rho^n=dt \otimes \rho^n_t$ and $(t \mapsto \rho_t^n)  \in \pcurves$. If $(\rho^n,m^n)$ converges to $(\rho,m)$ in the sense of \eqref{topology}, where $\rho=dt \otimes \rho_t$, $(t \mapsto \rho_t)  \in \pcurves$, then we have $K^*\rho^n \weak K^*\rho$ weakly in $\Ltwo$.
  \end{lem}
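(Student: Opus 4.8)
The plan is to treat the two assertions separately, since they rest on different mechanisms. For the first claim, that $t \mapsto K_t^* \rho_t$ belongs to $\Ltwo$ whenever $(t \mapsto \rho_t) \in \curves$, I would verify the two defining requirements of $\Ltwo$: square-integrability of the norm and strong measurability. Square-integrability is the easy part: narrow continuity of $t \mapsto \rho_t$ forces, for each fixed $\f \in C(\olom)$, the map $t \mapsto \ps{\rho_t}{\f}$ to be continuous hence bounded on $[0,1]$; by the uniform boundedness principle applied to the family $\{\rho_t\}_{t \in [0,1]} \subset C(\olom)^*$ this yields $\sup_{t} \norm{\rho_t}_{\M(\olom)} < +\infty$. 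Combining with \ref{ass:K2} gives $\norm{K_t^* \rho_t}_{H_t} \leq C \norm{\rho_t}_{\M(\olom)}$ uniformly in $t$, so that $\int_0^1 \norm{K_t^* \rho_t}_{H_t}^2 \, dt$ is finite.

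The delicate point of the first claim is strong measurability, and I expect this to be the main obstacle. My approach would be to reduce it to weak measurability against the determining family $\{i_t \f : \f \in D\}$, which is legitimate because $i_t(D)$ is dense in $H_t$ by \ref{ass:H2}. To establish that $t \mapsto \ps{K_t^* \rho_t}{i_t \f}_{H_t} = \ps{\rho_t}{K_t i_t \f}$ is Lebesgue measurable for fixed $\f \in D$, I would approximate the narrowly continuous curve by piecewise-constant-in-time curves $\rho^N_t := \rho_{s_j}$ on a partition $\{s_j\}$ of $[0,1]$ with mesh tending to zero. For each such approximation the quantity $t \mapsto \ps{K_t^* \rho_{s_j}}{i_t \f}_{H_t}$ is measurable, being the inner product of the strongly measurable maps $t \mapsto K_t^* \rho_{s_j}$ (from \ref{ass:K3}) and $t \mapsto i_t \f$ (whose weak measurability is \ref{ass:H3}), and recalling that the inner product of strongly measurable maps is measurable. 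Letting the mesh tend to zero, narrow continuity gives $\rho_{s_j} \weakstar \rho_t$ for every $t$, and weak*-to-weak continuity \ref{ass:K1} upgrades this to $K_t^* \rho_{s_j} \weak K_t^* \rho_t$ in $H_t$, so the pairings converge pointwise in $t$; a pointwise limit of measurable functions is measurable. With weak measurability in hand, I would invoke the Pettis-type characterization of strong measurability for $\Ltwo$ established in \cite{bf} to conclude.

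For the second claim, the weak convergence $K^* \rho^n \weak K^* \rho$ in $\Ltwo$, the strategy is the standard combination of a uniform bound with convergence on a dense set. Since each $\rho^n \geq 0$ satisfies the continuity equation, Lemma \ref{lem:prop cont} shows that $t \mapsto \rho_t^n(\olom)$ is constant, equal to $\rho^n(X)$; the weak* convergence $\rho^n \weakstar \rho$ in \eqref{topology} together with Banach--Steinhaus bounds $\sup_n \rho^n(X) < +\infty$, whence $\sup_n \norm{K^* \rho^n}_{\Ltwo} < +\infty$ by \ref{ass:K2}. It therefore suffices to test against a dense family of step functions $g_t = \chi_A(t)\, i_t \f$ with $\f \in D$ and $A \subset [0,1]$ measurable. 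For such $g$ one has $\ps{K^* \rho^n}{g}_{\Ltwo} = \int_A \ps{\rho_t^n}{K_t i_t \f} \, dt$, and for each fixed $t$ the pointwise convergence $\rho_t^n \weakstar \rho_t$ from \eqref{topology} gives $\ps{\rho_t^n}{K_t i_t \f} \to \ps{\rho_t}{K_t i_t \f}$; the uniform mass bound furnishes an integrable dominating function, so dominated convergence yields $\ps{K^* \rho^n}{g}_{\Ltwo} \to \ps{K^* \rho}{g}_{\Ltwo}$. Density of such $g$ in $\Ltwo$ together with the uniform norm bound then gives the claimed weak convergence.
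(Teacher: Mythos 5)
The paper does not prove this lemma itself: it is recalled from \cite{bf} (Lemmas 4.2, 4.5, 4.6 there), so there is no in-paper proof to compare against. Your argument is correct and reconstructs the standard one: square-integrability via uniform boundedness of $\{\rho_t\}$ plus \ref{ass:K2}; strong measurability via a Pettis-type reduction to weak measurability against the dense family $i_t(D)$, combined with piecewise-constant-in-time approximation and the weak*-to-weak continuity \ref{ass:K1}; and weak convergence in $\Ltwo$ via the uniform norm bound together with testing against the dense set of step functions $\chi_A(t)\, i_t\f$ and dominated convergence using the pointwise-in-$t$ convergence in \eqref{topology}. The only external ingredient you lean on is the Pettis-type characterization of strong measurability, which is indeed established in \cite{bf} and is precisely what the common space $D$ in \ref{ass:H1}--\ref{ass:H3} is designed to enable.
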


\subsection{Proof of Lemma \ref{lem:additivity}} \label{app:proof:lemma}

The fact that $J_{\alpha,\beta}(\mu_j)=1$ follows by \eqref{formula B}. 
Define the vector field %
\[
v(t,x):= 
\begin{cases}
\dot\gamma_j(t)   & \text{ if } \,\, (t,x) \in \gr \gamma_j  \,,\\
0 & \text{ otherwise }\,,
\end{cases}
\]
where $\gr \gamma_j:=\{(t,\gamma_j(t)) \, \colon \, t \in (0,1)\}\subset X$. 
Notice that $v$ is well-defined up to negligibly many $t \in (0,1)$: indeed, we have $\dot \gamma_i= \dot \gamma_j$ a.e.~in $\{t: \gamma_i(t) =  \gamma_j(t)\}$ for every $i,j$ (see \cite[Theorem 4.4]{evansgariepy}). Hence $\dot \gamma_i(t) = \dot \gamma_j(t)$ for a.e.~$t \in (0,1)$ and every $x$ such that $(t,x) \in \gr \gamma_j \cap \gr \gamma_i$. Set now
$\mu:=\sum_{j=1}^N c_j \mu_{\gamma_j}$. 
It is immediate to see that $v=d m/d \rho$ and that $v$ satisfies $v(t,\gamma_j(t)) = \dot \gamma_j(t)$ for all $j =1,\ldots,N$.
Moreover, by linearity, $\mu$ satisfies the continuity equation \eqref{cont weak}. Employing %
\eqref{formula B} and the definition of $a_{\gamma_j}$, we conclude noting that
\[
\begin{aligned}
  J_{\alpha,\beta}(\mu) & = \frac{\beta}{2} \int_X |v(t,x)|^2 \, d\rho + \alpha \norm{\rho}_{\M(X)} 
 =  \sum_{j=1}^N c_j a_{\gamma_j} \left(\frac{\beta}{2} \int_0^1 |v(t,\gamma_j(t))|^2\, dt + \alpha \right)\\
 & =  \sum_{j=1}^N c_j a_{\gamma_j} \left(\frac{\beta}{2} \int_0^1 |\dot{\gamma}_j(t))|^2\, dt + \alpha \right)
 = \sum_{j=1}^N c_j\,.
\end{aligned}
\] 

\subsection{Existence of minimizers for linearized problems}

In this section we show existence of minimizers for the problems at \eqref{aux2} and \eqref{eq:aux4}. To this end, we prove existence for a slightly more general functional (see \eqref{eq:prox:1} below), which coincides with \eqref{aux2} and \eqref{eq:aux4} for $\f$ as in \eqref{def:fi} and $\f(t):=\rchi(-\infty,1](t)$, respectively. 

  \begin{thm} \label{ex min gen}
    Assume \ref{ass:H1}-\ref{ass:H3}, \ref{ass:K1}-\ref{ass:K3} and let $f \in \Ltwo$, $\alpha, \beta >0$. Given $(t \mapsto \tilde{\rho}_t) \in \curves$ define 
    $w_t:=-K_t(K_t^*\tilde \rho_t - f_t ) \in C(\olom)$ 
     for a.e.~$t \in (0,1)$. Let $\f \colon \R \to [0,+\infty]$ with $\f(0)=0$ be monotonically increasing, lower semicontinuous and super-linear at infinity, i.e.,  $\f(t)/t \to +\infty$ as $t \to +\infty$. 
  Then there exists $\mu^*=(\rho^*,m^*) \in \mathcal{D}$ that solves the minimization problem
  \begin{equation}
   \min_{\mu \in \calM} - \ps{\rho}{w} + \varphi(J_{\alpha,\beta}(\mu))\,,
    \label{eq:prox:1}
  \end{equation}
  where the product $\ps{\cdot}{\cdot}$ is defined at \eqref{eq:scalarproduct_alg}. 
  Moreover $\rho^*=dt \otimes \rho^*_t$ with $(t \mapsto \rho^*_t) \in \pcurves$.
  \end{thm}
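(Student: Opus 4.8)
The plan is to use the direct method of the calculus of variations. I would take a minimizing sequence $\{\mu^n\} = \{(\rho^n, m^n)\}$ for the functional in \eqref{eq:prox:1}, establish compactness in an appropriate topology, and then pass to the limit using lower semicontinuity of the regularizer together with continuity of the linear term. The key structural fact, established already in Lemma \ref{lem:prop J}, is that a uniform bound on $J_{\alpha,\beta}$ yields both the total variation bounds \eqref{lem:prop J est} and the weak* compactness encoded in \eqref{topology}; this is exactly the tool that makes the direct method work here despite the non-reflexivity of $\M$.

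First I would argue that the minimizing sequence has uniformly bounded $J_{\alpha,\beta}(\mu^n)$. By Remark \ref{rem:scalar_product}, whenever $J_{\alpha,\beta}(\mu^n) < +\infty$ we have $\ps{\rho^n}{w} = -\ps{K^*\rho^n}{K^*\tilde\rho - f}_{L^2_H}$, and Cauchy--Schwarz in $\Ltwo$ combined with $\norm{K^*\rho^n}_{L^2_H} \leq C\norm{\rho^n}_{\M(X)} \leq (C/\alpha) J_{\alpha,\beta}(\mu^n)$ (using \ref{ass:K2} and \eqref{lem:prop J est}) gives a linear-in-$J_{\alpha,\beta}$ bound $\abs{\ps{\rho^n}{w}} \leq c\, J_{\alpha,\beta}(\mu^n)$. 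Since the objective equals $-\ps{\rho^n}{w} + \f(J_{\alpha,\beta}(\mu^n))$ and $\f$ is super-linear at infinity, any sequence along which $J_{\alpha,\beta}(\mu^n) \to +\infty$ would have objective value tending to $+\infty$; as $\mu=0$ gives finite value $\f(0)=0$, the infimum is finite and the minimizing sequence satisfies $\sup_n J_{\alpha,\beta}(\mu^n) < +\infty$.

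With this uniform bound, Lemma \ref{lem:prop J} provides, up to a subsequence, a limit $\mu^* = (\rho^*, m^*) \in \calD$ with $\rho^* = dt \otimes \rho^*_t$ and $(t \mapsto \rho^*_t) \in \pcurves$, converging in the sense of \eqref{topology}; this simultaneously secures the final disintegration claim. I would then combine weak* lower semicontinuity of $J_{\alpha,\beta}$ (Lemma \ref{lem:prop J}) with monotonicity and lower semicontinuity of $\f$ to obtain $\f(J_{\alpha,\beta}(\mu^*)) \leq \liminf_n \f(J_{\alpha,\beta}(\mu^n))$. For the linear term, Lemma \ref{lem:prop K} guarantees that the convergence \eqref{topology} implies $K^*\rho^n \weak K^*\rho^*$ weakly in $\Ltwo$, so by \eqref{eq:equiv_prod_scalare} we get $\ps{\rho^n}{w} \to \ps{\rho^*}{w}$, i.e.\ the linear term is continuous along the sequence. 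Adding these two facts shows $\mu^*$ attains the infimum, completing the proof.

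The main obstacle is the interplay between the two topologies in \eqref{topology}: the full weak* convergence of $K^*\rho^n$ in $\Ltwo$ required for continuity of the linear term does not follow from weak* convergence of $\rho^n$ in $\M$ alone, but genuinely needs the additional pointwise-in-time convergence $\rho^n_t \weakstar \rho_t$ for every $t$, which is precisely why Lemma \ref{lem:prop K} (and the fact that the sequence lives in $\pcurves$) is indispensable. Care is also needed because $\ps{\rho}{w} = -\infty$ off the set of narrowly continuous curves, so one must verify the limit stays inside the admissible class where the product is finite; this is handled by the $\pcurves$ membership of the limit.
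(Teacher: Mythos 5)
Your proposal is correct and follows essentially the same route as the paper's proof: a minimizing sequence, a coercivity bound showing $\sup_n J_{\alpha,\beta}(\mu^n)<+\infty$ via the linear-in-$J_{\alpha,\beta}$ control of the pairing (Cauchy--Schwarz, \ref{ass:K2}, and \eqref{lem:prop J est}) against the super-linearity of $\f$, then compactness from Lemma \ref{lem:prop J}, continuity of the linear term from Lemma \ref{lem:prop K}, and weak* lower semicontinuity of $\f\circ J_{\alpha,\beta}$. The only cosmetic difference is that the paper phrases the coercivity step as a contradiction with a large constant $C'$, whereas you argue directly that the objective blows up; the content is identical.
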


  \begin{proof}
  First notice that the functional at \eqref{eq:prox:1} is proper since $J_{\alpha,\beta}(0)=0$ (by Lemma \ref{lem:prop B}) and $\f(0)=0$. Let $\{\mu^n\}$ be a minimizing sequence, so that, in particular,
  \begin{equation} \label{thm:wolfe:1}
  \sup_n \,\, - \ps{\rho^n}{w} + \varphi(J_{\alpha,\beta}(\mu^n)) < +\infty \,.
  \end{equation}
  We claim that $\sup_n J_{\alpha,\beta}(\mu^n) < +\infty$. Indeed, assume by contradiction that $J_{\alpha,\beta}(\mu^n) \to +\infty$ as $n \to +\infty$ (subsequentially). Fix $C'>0$. Since $\f$ is super-linear there exists $n_0 \in \N$ such that 
  \begin{equation} \label{thm:wolfe:22}
  \f (J_{\alpha,\beta}(\mu^n)) \geq C' J_{\alpha,\beta}(\mu^n) \,\,\, \text{ for all } \,\, n \geq n_0\,.	
  \end{equation}
Moreover, notice that for $n$ fixed we have $J_{\alpha,\beta}(\mu^n)<+\infty$, as \eqref{thm:wolfe:1} holds. In particular, we obtain that $\mu^n \in \mathcal{D}$ and $\rho^n=dt \otimes \rho^n_t$ with $(t \mapsto \rho_t^n) \in \pcurves$, thanks to Lemmas \ref{lem:prop cont}, \ref{lem:prop B}. By definition of $\ps{\cdot}{\cdot}$ at \eqref{eq:scalarproduct_alg}, assumptions \ref{ass:K1}-\ref{ass:K2} and Cauchy-Schwarz we obtain, for all fixed $n \in \N$,
  \begin{equation} \label{thm:wolfe:2}
  \begin{aligned}
  \ps{\rho^n}{w} & =   
  -\int_0^1 \ps{K_t^*\rho_t^n}{K_t^*\tilde{\rho}_t- f_t}_{H_t} \, dt \leq 
  \int_0^1 \norm{K_t^*\rho_t^n}_{H_t} \norm{K_t^*\tilde{\rho}_t- f_t}_{H_t} \, dt
  \\
  & \leq \, C \, \sup_t \norm{\rho_t^n}_{\M(\olom)} \int_0^1 \norm{K_t^*\tilde{\rho}_t- f_t}_{H_t} \, dt \leq 
  C \norm{\rho^n}_{\M(X)} \,  \norm{K^* \tilde \rho -f}_{L^2_H} \,,
  \end{aligned}
  \end{equation}
where $C>0$ is the constant from \ref{ass:K2}, and where we used that $\norm{\rho^n_t}_{\M(\olom)}=\norm{\rho^n}_{\M(X)}$ for each $t \in [0,1]$ (see Lemma \ref{lem:prop cont}). 
From \eqref{lem:prop J est}, \eqref{thm:wolfe:2} and \eqref{thm:wolfe:22} we get
\[
\begin{aligned}
- \ps{\rho^n}{w} + \varphi(J_{\alpha,\beta}(\mu^n)) & \geq -  C \norm{\rho^n}_{\M(X)} \,  \norm{K^* \tilde \rho -f}_{L^2_H} + 
 C' J_{\alpha,\beta}(\mu^n) \\
& \geq  J_{\alpha,\beta}(\mu^n) \lbrack C' - C \alpha^{-1}\norm{K^* \tilde \rho -f}_{L^2_H} \rbrack\,, %
\end{aligned}
\]
for all $n \geq n_0$. By choosing $C'>0$ sufficiently large, the above estimate contradicts \eqref{thm:wolfe:1}, showing that $\sup_n J_{\alpha,\beta}(\mu^n) < +\infty$. 
In this case Lemma \ref{lem:prop J} ensures that $(\rho^n,\mu^n)$ converges to $\mu^*=(\rho^*,m^*)$ in the sense of \eqref{topology}, up to subsequences, and $\mu^* \in \mathcal{D}$, $\rho^*=dt \otimes \rho_t^*$ with $(t \mapsto \rho^*_t) \in \pcurves$.  %
  In particular $K^*\rho^n \weak K^*\rho^*$ weakly in $\Ltwo$ by Lemma \ref{lem:prop K}. As $w_t = - K_t(K_t^* \tilde{\rho}_t- f_t)$ with $(K^* \tilde{\rho}- f) \in \Ltwo$ (Lemma \ref{lem:prop K}), from \ref{ass:K1} we deduce that $\ps{\rho^n}{w} \to \ps{\rho^*}{w}$ for $n \to +\infty$. 
  Recall that $J_{\alpha,\beta}$ is weak* lower semicontinuous (Lemma \ref{lem:prop J}). As $\f$ is lower-semicontinuous and monotonically increasing, we deduce that $\f \circ J_{\alpha,\beta}$ is weak* lower-semicontinuous. As $\{\mu^n\}$ is a minimizing sequence, by \eqref{topology} and Lemma \ref{lem:prop K}, we conclude that $\mu^*$ solves \eqref{eq:prox:1}. 
  \end{proof}

  \subsection{Analysis for the insertion step}\label{sec:computingextremal}

In this section we show that, under the assumptions \ref{ass:F1}-\ref{ass:F4} of Section \ref{sec:insstepheu} on the forward operators $K_t^*$, and assumptions \ref{ass:H1}-\ref{ass:H3} of Section \ref{sec:assumptions} on the sampling spaces $H_t$,  it is possible to tackle the insertion step problem \eqref{eq:sec5_ins} numerically, by means of gradient descent methods. %
  To this end, it is convenient to introduce the functionals  $F,W,L \colon H^1([0,1];\olom) \to \R$ as
\begin{equation} \label{def:operators FWL}
F(\gamma):=\frac{W(\gamma)}{L(\gamma)}   \,,
  \quad W(\gamma):=- \int_0^1 w_t(\gamma(t)) \,dt \,, \quad L(\gamma) := \frac{\beta}{2} \int_0^1 | \dot{\gamma}(t)|^2 \, dt + \alpha  \,.
  \end{equation}
As observed in Remark \ref{rem:min_ins}, the only case of interest is when the minimum value of the problem at \eqref{eq:sec5_ins} is stricly negative. 
Thus, we assume to be in such situation, and consider
\begin{equation} \label{eq:curves_F}
  \min_{\gamma \in H^1([0,1];\olom)} F(\gamma)  \,.
  \end{equation}
As discussed in Section \ref{sec:insstepheu}, we are interested in computing stationary points of $F$ by gradient descent. 
In order to make this possible, we first extend $F$ to the Hilbert space $H^1:=H^1([0,1];\R^d)$, in a way that the set of stationary points of $F$ is not altered. To be more precise, by assumptions \ref{ass:F1}-\ref{ass:F3} we have that the dual variable $w_t:=-K_t(K_t^*{\tilde{\rho}}_t - f_t)$ belongs to $C^{1,1}(\olom)$ for a.e.~$t \in (0,1)$. Additionally, \ref{ass:F4} implies that $\supp w_t , \,\supp \nabla w_t \subset E$ for a.e.~$t \in (0,1)$,  where $E \Subset \Om$ is closed and convex. We can then extend $w_t$ to the whole $\R^d$ by setting $w_t (x):=0$ for all $x \in \R^d \smallsetminus \olom$ and a.e.~$t \in (0,1)$. Consequently, the functional $F$ is well defined via \eqref{def:operators FWL} over the space $H^1$. 

In the above setting we are able to prove that $F$ is continuously Fr\'echet differentiable over $H^1$ (Proposition \ref{prop:gateaux} below). Denote by $D_\gamma F \in (H^1)^*$ the Fr\'echet derivative of $F$ at $\gamma$.  We also show that stationary points of $F$, i.e., curves $\gamma^* \in H^1$ such that $D_{\gamma^*}F=0$, satisfy $\gamma^*([0,1])\subset E \subset \Om$ whenever $F(\gamma^*)\neq 0$ (Proposition \ref{cor:optimality} below). Therefore %
problem \eqref{eq:curves_F} is equivalent to 
\begin{equation} \label{eq:curves_H1}
  \min_{\gamma \in H^1} F(\gamma)  \,.
  \end{equation}
We can now apply the gradient descent algorithm to compute stationary points of $F$, in the attempt of approximating solutions to  \eqref{eq:curves_H1}, and hence to \eqref{eq:curves_F}. 
The main result of this section states that descent sequences for $F$, in the sense of \eqref{def:sec5_descent}, converge (subsequentially) to stationary points.

\begin{thm} \label{thm:gradient_descent}
Assume \ref{ass:F1}-\ref{ass:F4} as in Section \ref{sec:insstepheu} and \ref{ass:H1}-\ref{ass:H3} as in Section \ref{sec:assumptions}. Assume given $(t \mapsto \tilde{\rho}_t) \in \curves$, $f \in \Ltwo$ and $\alpha,\beta>0$. For a.e.~$t \in (0,1)$ set $w_t:=-K_t(K_t^*\tilde{\rho}_t-f_t)$ and $w_t(x):=0$ for all $x \in \R^d \smallsetminus \olom$. Then, the corresponding functional $F \colon H^1 \to \R$ defined via \eqref{def:operators FWL} is continuously Fr\'echet differentiable. If $\{\gamma^n\}$ in ${H^1}$ is a descent sequence for $F$ in the sense of \eqref{def:sec5_descent} then, up to subsequences, $\gamma^n \to \gamma^*$ strongly in $H^1$. Any such accumulation point $\gamma^*$ satisfies $F(\gamma^*)<0$ and is stationary for $F$, that is, $D_{\gamma^*}F=0$. Moreover  $\gamma^*([0,1]) \subset E$, where $E \Subset \Om$ is the closed convex set in \ref{ass:F4}.  
\end{thm}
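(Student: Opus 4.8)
The plan is to combine the differentiability supplied by Proposition~\ref{prop:gateaux} with a coercivity/compactness argument for the descent sequence, and then to upgrade weak to strong $H^1$ convergence through a convergence-of-energy identity obtained by testing the (vanishing) gradient against the iterate itself. Differentiability of $F=W/L$ on $H^1$ is exactly the content of Proposition~\ref{prop:gateaux}: after the extension $w_t(x):=0$ for $x\notin\olom$ one has $w_t\in C^{1,1}(\R^d)$ by \ref{ass:F1}--\ref{ass:F4}, and since $H^1\hookrightarrow C([0,1];\R^d)$ the functional $W$ is continuously differentiable, while $L$ is a shifted quadratic with $L\ge\alpha>0$; the quotient rule then yields a continuous, locally Lipschitz Fréchet derivative. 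I record its form $D_\gamma F[\eta]=L(\gamma)^{-1}\big(D_\gamma W[\eta]-F(\gamma)\,\beta\int_0^1\dot\gamma\cdot\dot\eta\,dt\big)$, with $D_\gamma W[\eta]=-\int_0^1\nabla w_t(\gamma(t))\cdot\eta(t)\,dt$.

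Next I would show the descent sequence is bounded in $H^1$. By the Armijo/Backtracking rule $\{F(\gamma^n)\}$ is non-increasing, so $F(\gamma^n)\le F(\gamma^0)<0$. The quantity $|W(\gamma)|\le\int_0^1\norm{w_t}_{C(\olom)}\,dt=:C_w$ is finite (by \ref{ass:F2} and Lemma~\ref{lem:prop K}, as $(t\mapsto K_t^*\tilde\rho_t-f_t)\in\Ltwo$), whence $L(\gamma^n)=W(\gamma^n)/F(\gamma^n)\le C_w/|F(\gamma^0)|$ bounds the Dirichlet energy, i.e.\ $\norm{\dot\gamma^n}_{L^2}$. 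Moreover $W(\gamma^n)<0$ forces $\int_0^1 w_t(\gamma^n(t))\,dt>0$; since $\supp w_t\subset E$ by \ref{ass:F4}, the curve must meet the bounded set $E$ on a time-set of positive measure, giving a uniform pointwise anchor that, together with the bound on $\norm{\dot\gamma^n}_{L^2}$, bounds $\norm{\gamma^n}_{\infty}$ and hence $\norm{\gamma^n}_{H^1}$. Reflexivity and the compact embedding $H^1\hookrightarrow\hookrightarrow C([0,1])$ then produce a subsequence with $\gamma^n\weak\gamma^*$ in $H^1$ and $\gamma^n\to\gamma^*$ uniformly.

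I would then pass to the limit. Uniform convergence gives $W(\gamma^n)\to W(\gamma^*)$, monotonicity gives $F(\gamma^n)\searrow F_\infty$, and since $F_\infty\neq0$ this yields $L(\gamma^n)\to L_\infty:=W(\gamma^*)/F_\infty$ together with $W(\gamma^*)<0$ and $F_\infty<0$. A standard Zoutendijk estimate for Armijo line search, valid because the gradient is locally Lipschitz (Proposition~\ref{prop:gateaux}) and the iterates are bounded, gives $\norm{D_{\gamma^n}F}_{(H^1)^*}\to0$ along a further subsequence; passing to the limit in the weak formulation, using uniform convergence in the $W$-term and $\dot\gamma^n\weak\dot\gamma^*$ in the $L$-term, produces the limiting relation $D_{\gamma^*}W[\eta]=F_\infty\,\beta\int_0^1\dot\gamma^*\cdot\dot\eta\,dt$ for all $\eta\in H^1$.

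The crux, and the main obstacle, is the upgrade to strong convergence: weak lower semicontinuity only delivers $L(\gamma^*)\le L_\infty$, and the reverse inequality genuinely requires the vanishing-gradient identity tested against the iterate. Evaluating $D_{\gamma^n}F[\gamma^n]\to0$ and using $\beta\norm{\dot\gamma^n}_{L^2}^2=2(L(\gamma^n)-\alpha)$ gives, in the limit, $D_{\gamma^*}W[\gamma^*]-2W(\gamma^*)+2\alpha F_\infty=0$; inserting the limiting relation with $\eta=\gamma^*$ (i.e.\ $D_{\gamma^*}W[\gamma^*]=2F_\infty(L(\gamma^*)-\alpha)$) collapses this to $F_\infty L(\gamma^*)=W(\gamma^*)$, that is $L(\gamma^*)=L_\infty$. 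Hence $\norm{\dot\gamma^n}_{L^2}\to\norm{\dot\gamma^*}_{L^2}$, and with $\dot\gamma^n\weak\dot\gamma^*$ the Radon--Riesz property forces $\dot\gamma^n\to\dot\gamma^*$ strongly in $L^2$, so $\gamma^n\to\gamma^*$ strongly in $H^1$. Consequently $F(\gamma^*)=W(\gamma^*)/L(\gamma^*)=F_\infty<0$, the limiting relation becomes precisely $D_{\gamma^*}F=0$, and $\gamma^*([0,1])\subset E$ follows from the optimality conditions of Proposition~\ref{cor:optimality}.
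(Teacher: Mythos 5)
Your proposal is correct and follows the same overall skeleton as the paper: differentiability and local Lipschitz continuity of $DF$ from Proposition~\ref{prop:gateaux}, coercivity of the sublevel set $\{F\leq F(\gamma^0)\}$ via the Dirichlet-energy bound plus the anchoring of the curve in $E$, the standard Armijo/Zoutendijk argument to obtain monotonicity and $\norm{D_{\gamma^n}F}_{(H^1)^*}\to 0$, and Proposition~\ref{cor:optimality} for the final inclusion $\gamma^*([0,1])\subset E$. Where you genuinely depart from the paper is in the decisive weak-to-strong upgrade, which the paper isolates in Lemma~\ref{lem:descent}. There, the authors construct specially normalized test curves $\eta^n=\tfrac12\gamma^n+\frac{\alpha}{\beta\int_0^1\dot\gamma^n\cdot\dot\gamma\,dt}\,\gamma$, chosen so that $D_{\gamma^n}L(\eta^n)=L(\gamma^n)$ exactly; this forces a case distinction between $\dot\gamma^*\not\equiv 0$ and $\dot\gamma^*\equiv 0$ (the normalization is undefined in the degenerate case, which is then handled by a separate contradiction argument showing $\int_0^1|\dot\gamma^n|^2\,dt\to 0$). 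You instead derive two limit identities — testing the vanishing differential against $\gamma^n$ itself (using $D_{\gamma^n}L[\gamma^n]=2(L(\gamma^n)-\alpha)$ and $F(\gamma^n)L(\gamma^n)=W(\gamma^n)$) and against an arbitrary fixed $\eta$, then specializing $\eta=\gamma^*$ — whose combination collapses to $F_\infty L(\gamma^*)=W(\gamma^*)$ with no division by a possibly vanishing quantity and hence no case split. Both routes then conclude $L(\gamma^n)\to L(\gamma^*)$, i.e.\ convergence of the Dirichlet energies, and invoke Radon--Riesz together with the uniform convergence to get strong $H^1$ convergence. Your variant is the more streamlined of the two; the only points worth making explicit in a write-up are the dominated-convergence justification for $D_{\gamma^n}W[\gamma^n]\to D_{\gamma^*}W[\gamma^*]$ (which follows from the uniform convergence of $\gamma^n$, the Lipschitz bound on $\nabla w_t$ with $\norm{w_t}_{C^{1,1}}\in L^1(0,1)$, and the uniform bound on $\norm{\gamma^n}_\infty$) and the observation that $F_\infty\leq F(\gamma^0)<0$ so that dividing by $F_\infty$ is legitimate — both of which you have in place.
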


The proof of Theorem \ref{thm:gradient_descent}, postponed to Section \ref{sec:grad_descent} below, relies on differentiability results for $F$ and on properties of its stationary points, as discussed in the following Section \ref{sec:diff_F}. Finally, in Section \ref{subsec:test} we provide an implementable criterion to determine whether the minimum of \eqref{eq:sec5_ins} is strictly negative.

\subsubsection{Differentiability of $F$ and stationary points} \label{sec:diff_F}

In this section we discuss Fr\'echet differentiability and stationary points properties for the (extended) operator $F \colon H^1 \to \R$ defined at \eqref{def:operators FWL}. Before proceeding with the discussion, we establish a few notations and make some remarks on assumptions \ref{ass:F1}-\ref{ass:F4}.

In the following, for any closed $S \subseteq \R^d$ we denote by $C^{1,1}(S)$ the space of differentiable maps $\f \colon S \to \R$ such that the norm 
  \[
  \norm{\f}_{C^{1,1}(\olom)}:=\norm{\f}_\infty + \norm{\nabla \f}_\infty + \rm{Lip}(\nabla \f), \qquad \rm{Lip}(\nabla \f):=\sup_{x \neq y} \frac{|\nabla \f(x) - \nabla \f (y)|}{|x-y|}\,,
  \]
  is finite, where $\nabla \colon C^{1,1}(S) \to C(S;\R^d)$ is the gradient operator. Notice that in this case $\nabla$ is linear and continuous.  We will also consider the Bochner space $L^2([0,1];C^{1,1}(S))$ equipped with the norm $\norm{w}_{1,1}^2:=\int_0^1 \norm{w_t}_{C^{1,1}(S)}^2 \, dt$. Finally, for two Banach spaces $X, Y$ and a continuously Fr\'echet differentiable map $G \colon X \to Y$, we denote the differential of $G$ by $DG \colon X \to Y^*$, with evaluation at $u \in X$ given by the linear continuous functional $v \mapsto D_u G(v)$ belonging to $Y^*$.

  \begin{rem} \label{rem:assumptions}
Notice that \ref{ass:F3} also holds for $\rho \in C^{1,1}(\olom)^*$: indeed, let $\{\rho^n\}$ sequence in $\M(\olom)$ be such that $\norm{\rho^n-\rho}_{C^{1,1}(\olom)^*}\to 0$ as $n \to +\infty$. Then the maps $t \mapsto K_t^* \rho^n$ are strongly measurable at each fixed $n$ by \ref{ass:F3}. From \ref{ass:F2} %
  we have $\norm{K_t^*\rho^n-K_t^*\rho}_{H_t}\leq C \norm{\rho^n-\rho}_{C^{1,1}(\olom)^*}\to 0$ for a.e.~$t \in (0,1)$, implying that $t \mapsto K_t^*\rho$ is strongly measurable by \cite[Remark 3.4]{bf}. 
  
  A way to ensure that \ref{ass:F1}-\ref{ass:F4} hold is as follows. First notice that if $K_t^*$ satisfies \ref{ass:F1}-\ref{ass:F3} and $D \colon C^{1,1}(\olom) \to C^{1,1}(\olom)$ is a linear bounded operator, then one can check that also $\tilde{K}_t^*:=K_t^* D^*$ satisfies \ref{ass:F1}-\ref{ass:F3}. Now let $E' \Subset E$ be closed and let $\xi_E$ be a cut-off function such that
\begin{equation} \label{def:cut-off}
\xi_E \in C^{1,1}(\olom), \,\,\, 0 \leq \xi_E \leq 1\,,  \,\,\, \xi_E=1 \, \text{ in } \, E'\,, \, \,\, \xi_E=0 \, \text{ in } \, \olom \smallsetminus E\,.
\end{equation}
Defining $D$ by $D\f :=\xi_E \f$ for all $\f \in C^{1,1}(\olom)$ yields that $\tilde{K}_t=K_t D$ satisfies \ref{ass:F4}.  
\end{rem}

In order to show that $F$ is differentiable, we first investigate the regularity for dual variables $t \mapsto w_t$ of the form considered in \eqref{eq:sec5_ins}. The differentiability properties for $F$ are considered afterwards.

 \begin{lem} \label{lem:C1}
 Assume \ref{ass:F1}-\ref{ass:F4},  \ref{ass:H1}-\ref{ass:H3} and let $g \in \Ltwo$. For a.e.~$t \in (0,1)$ set $w_t:=K_tg_t$ and $w_t(x):=0$ for all $x \in \R^d \smallsetminus \olom$. Then  $w$ belongs to $L^2([0,1];C^{1,1}(\R^d))$. Moreover $w_t, \nabla w_t$ are Carath{\'e}odory functions in $[0,1] \times \R^d$, that is, $x \mapsto w_t(x)$, $x \mapsto  \nabla w_t(x)$ are continuous for a.e. $t \in (0,1)$ fixed and $t \mapsto w_t(x)$, $t \mapsto  \nabla w_t(x)$ are measurable for all $x \in \R^d$ fixed.  %
  \end{lem}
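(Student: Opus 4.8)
The plan is to verify three things in turn: that the zero-extension of $w_t$ genuinely lands in $C^{1,1}(\R^d)$, that its norm is controlled pointwise in time by $\norm{g_t}_{H_t}$, and that $w$ (together with $\nabla w$) is a Carathéodory function; the $\Ltwo$-type membership then follows by combining the last two points. First I would treat the extension. By \ref{ass:F1} the pre-adjoint $K_t$ maps $H_t$ into $C^{1,1}(\olom)$, so $w_t=K_tg_t\in C^{1,1}(\olom)$ for a.e.\ $t$. The decisive ingredient is \ref{ass:F4}: since $\supp w_t$ and $\supp\nabla w_t$ are contained in the closed convex set $E\Subset\Om$, both $w_t$ and $\nabla w_t$ vanish on a neighbourhood of $\partial\Om$ inside $\olom$, so gluing with $0$ on $\R^d\setminus\olom$ yields a function in $C^{1,1}(\R^d)$. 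Because $\nabla w_t$ vanishes outside $E$, comparing points along segments that cross the region $\olom\setminus E$ shows $\norm{w_t}_{C^{1,1}(\R^d)}=\norm{w_t}_{C^{1,1}(\olom)}$; with \ref{ass:F2} this gives the pointwise bound $\norm{w_t}_{C^{1,1}(\R^d)}\le\norm{K_t}\,\norm{g_t}_{H_t}\le C\norm{g_t}_{H_t}$ for a.e.\ $t$.

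The heart of the argument is the Carathéodory property. For fixed $t$, continuity of $x\mapsto w_t(x)$ and $x\mapsto\nabla w_t(x)$ is immediate from $w_t\in C^{1,1}(\R^d)$. The substantive point is measurability in $t$ for fixed $x$, which I would obtain by realizing point evaluation and its partial derivatives as elements of $C^{1,1}(\olom)^*$: both $\delta_x$ and $\partial_i\delta_x\colon\f\mapsto\partial_i\f(x)$ are bounded on $C^{1,1}(\olom)$, and the duality between $K_t$ and $K_t^*$ yields $w_t(x)=\ps{K_t^*\delta_x}{g_t}_{H_t}$ and $\partial_i w_t(x)=\ps{K_t^*(\partial_i\delta_x)}{g_t}_{H_t}$. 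Since $\delta_x$ is a measure and $\partial_i\delta_x$ is a $C^{1,1}(\olom)^*$-norm limit of (rescaled differences of) measures, the extension of \ref{ass:F3} recorded in Remark~\ref{rem:assumptions} gives that $t\mapsto K_t^*\delta_x$ and $t\mapsto K_t^*(\partial_i\delta_x)$ are strongly measurable; as $g$ is strongly measurable, each map $t\mapsto\ps{K_t^*\psi}{g_t}_{H_t}$ is then Lebesgue measurable by \cite[Remark 3.4]{bf}. This establishes the Carathéodory property for $w_t$ and $\nabla w_t$.

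Finally I would deduce the membership $w\in L^2([0,1];C^{1,1}(\R^d))$. Fixing a countable dense set of points (and pairs of points) in $\R^d$, the Carathéodory property expresses $t\mapsto\norm{w_t}_{C^{1,1}(\R^d)}$ as a countable supremum of measurable functions, hence measurable; the pointwise bound from the first paragraph then gives $\int_0^1\norm{w_t}_{C^{1,1}(\R^d)}^2\,dt\le C^2\norm{g}_{\Ltwo}^2<\infty$. I expect the main obstacle to be precisely the measurability-in-$t$ step: one must legitimately interpret point evaluation and its derivatives as continuous functionals on $C^{1,1}(\olom)$ and verify that the extended \ref{ass:F3} applies to them. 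If one insists on genuine Bochner strong measurability into the non-separable space $C^{1,1}(\R^d)$, an extra argument is required, which I would carry out by approximating $g$ with $D$-valued step functions $s_n(t)=\sum_j\rchi_{A_j^n}(t)\,i_t(\f_j^n)$, noting that $K_ts_n(t)\to w_t$ in $C^{1,1}$ for a.e.\ $t$, and invoking Pettis' theorem together with essential separable-valuedness; for the purposes of defining and differentiating $W(\gamma)=-\int_0^1 w_t(\gamma(t))\,dt$, however, the Carathéodory property with integrable norm is exactly what is needed.
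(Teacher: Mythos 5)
Your proof is correct, and its core computation --- realizing $w_t(x)=\ps{K_t^*\delta_x}{g_t}_{H_t}$ and $\partial_i w_t(x)=\ps{K_t^*(\partial_i\delta_x)}{g_t}_{H_t}$, then invoking the extension of \ref{ass:F3} to $C^{1,1}(\olom)^*$ from Remark~\ref{rem:assumptions} together with \cite[Remark 3.4]{bf} --- is exactly the paper's route to the Carath\'eodory property (the paper writes $\nabla^* e_i\delta_x$ for your $\partial_i\delta_x$; these are the same functional, and your observation that it is a $C^{1,1}(\olom)^*$-norm limit of difference quotients of Dirac masses is what legitimizes applying the remark). Where you genuinely differ is in how the membership $w\in L^2([0,1];C^{1,1})$ is obtained: the paper first proves Bochner strong measurability of $t\mapsto K_tg_t$ into $C^{1,1}(\olom)$ via the Pettis theorem, asserting that $C^{1,1}(\olom)$ is separable, and then applies the Bochner integrability criterion, reading off the Carath\'eodory property afterwards; you instead establish the Carath\'eodory property first and obtain measurability of $t\mapsto\norm{w_t}_{C^{1,1}(\R^d)}$ as a countable supremum over dense points and pairs of points, which combined with the pointwise bound $\norm{w_t}_{C^{1,1}}\leq C\norm{g_t}_{H_t}$ from \ref{ass:F2} yields square integrability of the norm without any separability input. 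This buys something real: spaces carrying a Lipschitz seminorm, such as $C^{1,1}(\olom)$, are in general \emph{not} separable in their natural norm, so the paper's Pettis step is delicate, whereas your argument delivers precisely what is used downstream (the Carath\'eodory property plus finiteness of $\int_0^1\norm{w_t}_{C^{1,1}}^2\,dt$, which is all that enters Proposition~\ref{prop:gateaux} and Lemma~\ref{lem:descent}). The only loose end is your sketch of genuine Bochner strong measurability via $D$-valued step functions, which as written is slightly circular (it presupposes measurability of $t\mapsto K_ti_t\varphi$ into $C^{1,1}$); since you correctly flag this as unnecessary for the paper's purposes, the proof stands.
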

   
   \begin{proof}
   We first show that $w \in L^2([0,1];C^{1,1}(\olom))$ and that $w$, $\nabla w$ are Carath{\'e}odory in $[0,1] \times \olom$. In order to do so, let us check that the map $t  \mapsto K_t g_t$ is strongly measurable in the classic sense \cite[Ch II]{diestel}. Since $C^{1,1}(\olom)$ is separable, by the Pettis measurability theorem \cite[Ch II.1, Thm 2]{diestel}, strong measurability is equivalent to weak measurability, that is, we need to prove that
   \begin{equation} \label{lem:C1:1}
   t \mapsto \ps{\rho}{K_tg_t}_{C^{1,1}(\olom)^*,C^{1,1}(\olom)}
   \end{equation}
   is measurable for each $\rho \in C^{1,1}(\olom)^*$. Note that $\ps{\rho}{K_tg_t}_{C^{1,1}(\olom)^*,C^{1,1}(\olom)}=\ps{K_t^* \rho}{g_t}_{H_t}$ by \ref{ass:F1}. Moreover $t \mapsto K_t^*\rho$ is strongly measurable by \ref{ass:F3} and Remark \ref{rem:assumptions}. Since $g$ is strongly measurable (as $g \in \Ltwo$), by \cite[Remark 3.4]{bf} we conclude that $t \mapsto \ps{K_t^* \rho}{g_t}_{H_t}$ is measurable. Therefore the measurability of the map at \eqref{lem:C1:1} follows.   
From \ref{ass:F1}-\ref{ass:F2}  we infer $\int_0^1 \norm{K_t g_t}_{C^{1,1}(\olom)}^2 \, dt<+\infty$,   
  since $g \in \Ltwo$. By \cite[Ch II.2, Thm 2]{diestel} we then conclude $w \in L^2([0,1];C^{1,1}(\olom))$. In particular the maps $x \mapsto w_t(x)$, $x \mapsto \nabla w_t (x)$ are continuous for a.e. $t \in (0,1)$ fixed and $x$ varying in $\olom$. Let now $x \in \olom$ be fixed. By \ref{ass:F1} and Remark \ref{rem:assumptions} we have
  $w_t(x)=\ps{\delta_x}{K_tg_t}_{\M(\olom),C(\olom)} = \ps{K_t^*\delta_x}{g_t}_{H_t}$.
  As the map $t \mapsto K_t^*\delta_x$ is strongly measurable by \ref{ass:F3}, and $g$ is strongly measurable since it belongs to $\Ltwo$, from \cite[Remark 3.4]{bf} we conclude that $t \mapsto \ps{K_t^*\delta_x}{g_t}_{H_t}$ is measurable. Thus $w$ is Carath{\'e}odory in $[0,1] \times \olom$. Similarly, we have  
    $\partial_{x_i} w_t (x)%
    =\ps{\nabla^* e_i  \delta_x}{K_tg_t}_{C^{1,1}(\olom)^*,C^{1,1}(\olom)}$ 
  for all $i=1,\ldots,d$, where $e_i$ is the $i$-th coordinate vector in $\R^d$. 
  Notice that $\nabla^* e_i \delta_x \in C^{1,1}(\olom)^*$. Hence the measurability of $t \mapsto \nabla w_t(x)$ is implied by setting $\rho=\nabla^* e_i\delta_x$ in \eqref{lem:C1:1}, showing that $\nabla w$ is Carath{\'e}odory in $[0,1] \times \olom$. 
  Finally, the facts that $w \in L^2([0,1];\R^d)$ and that $w$, $\nabla w$ are Carath{\'e}odory in $[0,1] \times \R^d$, follow since $w_t$ is extended to zero in $\R^d \smallsetminus \olom$ and \ref{ass:F4} holds. %
   \end{proof}

  \begin{prop} \label{prop:gateaux}
   Assume \ref{ass:F1}-\ref{ass:F4}, \ref{ass:H1}-\ref{ass:H3}. Let $(t \mapsto \tilde{\rho}_t) \in \curves$, $f \in \Ltwo$ and $\alpha,\beta>0$ be given. For a.e.~$t \in (0,1)$ set $w_t:=-K_t(K_t^*\tilde{\rho}_t-f_t)$ and $w_t(x):=0$ for all $x \in \R^d \smallsetminus \olom$. Then, the corresponding functionals $F, W, L$ defined at \eqref{def:operators FWL} are continuously Fr{\'e}chet differentiable in $H^1:=H^1([0,1];\R^d)$. The derivatives of $F$, $W$, $L$ at $\gamma \in H^1$ are given by 
    \begin{gather}
     D_{\gamma} F (\eta)  = \frac{ D_{\gamma} W(\eta)}{  L(\gamma)} - F(\gamma) \,\frac{ D_{\gamma} L(\eta)     }{L(\gamma)} \,, \label{gateaux F} \\
      D_{\gamma} W(\eta)  
      = - \int_0^1 \nabla w_t(\gamma(t)) \cdot \eta(t) \, dt \, , \quad 
       D_{\gamma} L(\eta) 
        = \beta \int_0^1  \dot{\gamma}(t) \cdot \dot{\eta}(t) \,dt \,, \label{gateaux derivatives}
        \end{gather}
    for each $\eta \in  H^1$. In addition  we have 
    \begin{equation} \label{gateaux:estimates}
    \sup_{\gamma \in H^1}|F(\gamma)| \leq \frac{\norm{w}_{1,1}}{\alpha} \,, \quad
    \sup_{\gamma \in H^1} \norm{D_\gamma F}_{(H^1)^*} \leq \frac{\norm{w}_{1,1}}{\alpha} \, \left( 1 + \sqrt{\frac{\beta}{2\alpha}} \right) \,,
    \end{equation}
    where $\norm{w}_{1,1}^2:=\int_0^1 \norm{w_t}_{C^{1,1}(\olom)}^2 \, dt$. Last, the map $\gamma \mapsto D_{\gamma} F$ is locally Lipschitz, that is, for all $R>0$ fixed it holds %
    \begin{equation} \label{gateaux:lip}
    \norm{D_{\gamma^1}F - D_{\gamma^2}F}_{(H^1)^*} \leq (C_1 R + C_2) \norm{\gamma^1 - \gamma^2}_{H^1}\,,
    \end{equation}
    for all $\gamma^i \in H^1$ such that $\norm{\gamma^i}_{H^1} \leq R$, $i=1,2$, where $C_1,C_2>0$ are constants depending only on $\alpha,\beta$ and $w$.  
  \end{prop}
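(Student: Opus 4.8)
The functionals $W$ and $L$ are integral functionals on $H^1$, and $F = W/L$ with $L \geq \alpha > 0$, so the natural strategy is to differentiate $W$ and $L$ directly, verify that the quotient rule applies, and then obtain the bounds and local Lipschitz continuity by elementary estimates using the $C^{1,1}$-regularity of $w$ established in Lemma~\ref{lem:C1}. First I would treat $L(\gamma) = \frac{\beta}{2}\int_0^1 |\dot\gamma|^2\,dt + \alpha$: this is (an affine shift of) half the squared $H^1$-seminorm, so it is a bounded quadratic form, hence smooth, with the stated derivative $D_\gamma L(\eta) = \beta \int_0^1 \dot\gamma \cdot \dot\eta\,dt$; this is entirely routine. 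The substantive part is $W(\gamma) = -\int_0^1 w_t(\gamma(t))\,dt$.

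For $W$, the plan is to use the Carath\'eodory and $C^{1,1}$ properties from Lemma~\ref{lem:C1}. Since $w_t \in C^{1,1}(\R^d)$ with $\nabla w_t$ globally Lipschitz (uniformly, via the $C^{1,1}(\olom)$ norm and the extension by zero), a second-order Taylor estimate gives pointwise in $t$
\[
|w_t(\gamma(t)+\eta(t)) - w_t(\gamma(t)) - \nabla w_t(\gamma(t))\cdot \eta(t)| \leq \tfrac12 \mathrm{Lip}(\nabla w_t)\,|\eta(t)|^2\,.
\]
Integrating in $t$, using the embedding $H^1([0,1];\R^d)\hookrightarrow C([0,1];\R^d)$ so that $\|\eta\|_\infty \lesssim \|\eta\|_{H^1}$, and bounding $\int_0^1 \mathrm{Lip}(\nabla w_t)\,dt$ by $\|w\|_{1,1}$ via Cauchy--Schwarz, yields that the candidate derivative $D_\gamma W(\eta) = -\int_0^1 \nabla w_t(\gamma(t))\cdot\eta(t)\,dt$ is indeed the Fr\'echet derivative, with remainder $o(\|\eta\|_{H^1})$. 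Measurability of $t\mapsto \nabla w_t(\gamma(t))$ (needed for the integral to make sense) follows from $\nabla w$ being Carath\'eodory together with measurability of $t\mapsto\gamma(t)$. Continuity of $\gamma \mapsto D_\gamma W$ follows similarly from the (local) Lipschitz continuity of $\nabla w_t$ and the uniform continuity estimate in $H^1$; this already delivers local Lipschitz continuity of $D_\gamma W$.

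Next I would assemble $F = W/L$ via the quotient rule, obtaining \eqref{gateaux F}. Since $L$ is smooth and bounded below by $\alpha$, the map $\gamma\mapsto 1/L(\gamma)$ is smooth, so $F$ is $C^1$ and the formula is immediate. For the uniform bounds \eqref{gateaux:estimates}, the estimate on $|F|$ follows from $|W(\gamma)| \leq \int_0^1 \|w_t\|_\infty\,dt \leq \|w\|_{1,1}$ and $L(\gamma) \geq \alpha$. For $\|D_\gamma F\|_{(H^1)^*}$, I would bound $\|D_\gamma W\|$ by $\|w\|_{1,1}$ (using $\|\nabla w_t(\gamma(t))\|_{L^2_t} \leq \|w\|_{1,1}$ and $\|\eta\|_{L^2}\leq \|\eta\|_{H^1}$), bound $|F(\gamma)|$ as above, and bound $\|D_\gamma L\|_{(H^1)^*} = \beta\|\dot\gamma\|_{L^2}$; the subtle point is that $|F(\gamma)|\,\|D_\gamma L\|/L(\gamma)$ involves $\|\dot\gamma\|_{L^2}$, which is controlled because $\|\dot\gamma\|_{L^2}/L(\gamma) = \|\dot\gamma\|_{L^2}/(\frac{\beta}{2}\|\dot\gamma\|_{L^2}^2 + \alpha) \leq \sqrt{2/(\alpha\beta)}$ by the AM--GM inequality, which produces exactly the factor $\sqrt{\beta/(2\alpha)}$ after multiplying by $\beta/L$.

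\textbf{Main obstacle.} The routine pieces are the differentiation of $L$ and the algebra of the quotient rule. The genuinely delicate step is the local Lipschitz estimate \eqref{gateaux:lip} for $\gamma\mapsto D_\gamma F$, because $D_\gamma F$ mixes three nonlinear dependencies on $\gamma$: through $\nabla w_t(\gamma(t))$ in $D_\gamma W$, through $F(\gamma)$, and through $1/L(\gamma)$. Controlling the difference $\|D_{\gamma^1}F - D_{\gamma^2}F\|$ requires: (i) Lipschitz continuity of $t\mapsto \nabla w_t(\gamma^i(t))$ in $\gamma^i$, coming from $\mathrm{Lip}(\nabla w_t)$ and $\|\gamma^1-\gamma^2\|_\infty \lesssim \|\gamma^1-\gamma^2\|_{H^1}$; (ii) Lipschitz continuity of $F$ and of $1/L$ on the ball $\{\|\gamma\|_{H^1}\leq R\}$, where the $R$-dependence enters through $\|\dot\gamma\|_{L^2}\leq R$ in the $D_\gamma L$ terms. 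Carefully tracking which terms contribute the $C_1 R$ part (those carrying a factor of $\|\dot\gamma\|_{L^2}$ or $\mathrm{Lip}(\nabla w)$ times the position difference) versus the constant $C_2$ part, and confirming that all constants depend only on $\alpha,\beta,w$ and not on $\gamma^i$ beyond the bound $R$, is the place where the bookkeeping must be done with care.
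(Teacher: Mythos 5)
Your proposal is correct and follows essentially the same route as the paper: differentiate $L$ and $W$ separately, assemble $F$ by the quotient rule using $L \geq \alpha > 0$, obtain the $\sqrt{\beta/(2\alpha)}$ bound on $\norm{D_\gamma L}/L(\gamma)$ by optimizing $s \mapsto \beta s/(\beta s^2/2+\alpha)$ (your AM--GM argument gives the same constant), and prove the local Lipschitz estimate by splitting $D_\gamma F$ into the $D_\gamma W/L(\gamma)$ and $F(\gamma)\,D_\gamma L/L(\gamma)$ contributions with the $C_1R$ term coming from the $1/L$ difference. The only (harmless) divergence is in the treatment of $W$: you obtain Fréchet differentiability directly from the second-order Taylor remainder $\tfrac12\,\mathrm{Lip}(\nabla w_t)\,|\eta(t)|^2$, whereas the paper first establishes the G\^ateaux derivative (following Theorem 3.37 of Dacorogna) and then upgrades to $C^1$ via the global Lipschitz estimate on $\gamma \mapsto D_\gamma W$ --- both arguments rest on the same ingredients, namely the Carath\'eodory property of $\nabla w$ from Lemma~\ref{lem:C1}, the bound $\int_0^1 \mathrm{Lip}(\nabla w_t)\,dt \leq \norm{w}_{1,1}$, and the embedding $\norm{\eta}_\infty \leq \sqrt{2}\,\norm{\eta}_{H^1}$.
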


  \begin{proof}
The continuous Fr{\'e}chet differentiability of $L$ is standard, and the proof is omitted. Moreover, continuous differentiability of $F$ and formula \eqref{gateaux F} follow from continuous differentiability of $W$ and $L$, and from the quotient rule, given that $L \geq \alpha >0$. Therefore, let us show that $W$ is continuously differentiable with derivative as in \eqref{gateaux derivatives}. Since $(t \mapsto \tilde{\rho}_t) \in \curves$, by Lemma \ref{lem:prop K} we have that $t \mapsto K_t^*\tilde{\rho}_t$ belongs to $\Ltwo$, so that also $g:=-(K^*\tilde{\rho} - f)$ belongs to $\Ltwo$. Set $w_t:=K_t^*g_t$ and $w_t(x):=0$ for all $x \in \R^d \smallsetminus \olom$. By Lemma \ref{lem:C1} we know that $w \in L^2([0,1];C^{1,1}(\R^d))$ and $w$, $\nabla w$ are Carath{\'e}odory maps in $[0,1] \times \R^d$. In particular, for a fixed $\gamma \in H^1$, the maps 
   $t \mapsto w_t(\gamma(t)), t \mapsto \nabla w_t( \gamma(t))$ 
   are measurable \cite[Proposition 3.7]{dacorogna}. 
   Since $w \in L^2([0,1];C^{1,1}(\R^d))$, we can proceed as in the proof of Theorem 3.37 in \cite{dacorogna} and show that
   the G\^{a}teaux derivative of $W$ at $\gamma$, along the direction $\eta$, is given by the first formula in \eqref{gateaux derivatives}.
We are left to prove that $\gamma \mapsto D_\gamma W$ is continuous from $H^1$ into $(H^1)^*$. 
To this end, fix $\gamma^1,\gamma^2 \in H^1$ and notice that
  \begin{equation} \label{gateaux:4}
  \begin{aligned}
  \norm{ D_{\gamma^1} W - D_{\gamma^2} W  }_{(H^1)^*} %
  & \leq \sup_{\substack{\eta \in H^1, \\ \norm{\eta}_{H^1} \leq 1}} \norm{\eta}_\infty \int_0^1 {\rm Lip}(\nabla w_t) | \gamma^1(t) - \gamma^2(t) | \, dt \\
   & \leq \sqrt{2} \norm{w}_{1,1} \norm{ \gamma^1 - \gamma^2}_{H^1}\,, 
   \end{aligned}
  \end{equation}
  where in the last inequality we employed Cauchy-Schwarz and the estimate $\norm{\eta}_{\infty} \leq \sqrt{2} \norm{\eta}_{H^1}$. Notice that \eqref{gateaux:4} shows that the map $\gamma \mapsto D_\gamma W$ is Lipschitz from $H^1$ into $(H^1)^*$. Thus, in particular, $W$ is continuously Fr{\'e}chet differentiable. 
    We will now prove the estimates at \eqref{gateaux:estimates}-\eqref{gateaux:lip}.  
  The first bound in \eqref{gateaux:estimates} follows immediately from the definition of $F$, the fact that $w \in L^2([0,1];C^{1,1}(\R^d))$, and the estimate $L \geq \alpha >0$.  
  As for the second estimate in \eqref{gateaux:estimates}, by \eqref{gateaux F} and the triangle inequality we have
   \begin{equation} \label{gateaux:6}
   \norm{D_\gamma F}_{(H^1)^*} \leq \frac{\norm{ D_\gamma W }_{(H^1)^*}   }{L(\gamma)} +  \norm{\frac{ D_\gamma L}{L(\gamma)} }_{(H^1)^*} \, |F(\gamma)| \,. 
   \end{equation}
  Notice that $\norm{D_\gamma W}_{(H^1)^*} \leq \norm{w}_{1,1}$, thanks to \eqref{gateaux derivatives} and H\"older's inequality.   Moreover, by \eqref{gateaux derivatives} and H\"older's inequality,
    \begin{equation} \label{gateaux:est_L} 
   \norm{\frac{ D_\gamma L}{L(\gamma)} }_{(H^1)^*} \leq 
   \frac{\beta \left( \int_0^1 |\dot{\gamma}|^2 \, dt \right)^{1/2}}{\frac{\beta}{2}   \int_0^1 |\dot{\gamma}|^2 \, dt + \alpha } \leq \sqrt{ \frac{\beta}{2 \alpha} } \,,
   \end{equation}
   where the second estimate is obtained by noting that the real map $s \mapsto \beta s/(\beta s^2 /2 + \alpha)$ is differentiable, with maximum value given by $\sqrt{\beta/(2\alpha)}$. 
   By the first estimate in \eqref{gateaux:estimates} and the fact that $L \geq \alpha$, from \eqref{gateaux:6}-\eqref{gateaux:est_L} we conclude \eqref{gateaux:estimates}. Finally we prove \eqref{gateaux:lip}. To this end, fix $R>0$ and $\gamma^1,\gamma^2 \in H^1$ such that $\norm{\gamma^1}_{H^1},\norm{\gamma^2}_{H^1} \leq R$. Note that,  as a consequence of \eqref{gateaux F}, we get  
      \begin{equation} \label{gateaux:est_11} 
  \norm{D_{\gamma^1}F-D_{\gamma^2}F}_{(H^1)^*} \leq 
  \norm{  \frac{D_{\gamma^1}W}{L(\gamma^1)} - \frac{D_{\gamma^2}W}{L(\gamma^2)}  }_{(H^1)^*} +  \norm{ F(\gamma^1) \, \frac{D_{\gamma^1}L}{L(\gamma^1)} - F(\gamma^2) \, \frac{D_{\gamma^2}L}{L(\gamma^2)}  }_{(H^1)^*}\,.
  \end{equation}
Concerning the first term in \eqref{gateaux:est_11}, observe that, by the estimate $L\geq \alpha$,
     \begin{equation} \label{gateaux:est_12} 
  \begin{aligned}
  \left| \frac{1}{L(\gamma^1)}  - \frac{1}{L(\gamma^2)}      \right|  & =  \left| \frac{L(\gamma^1)-L(\gamma^2)}{L(\gamma^1)L(\gamma^2)}   \right| \leq \frac{\beta}{2\alpha^2} \left|  \int_0^1 (|\dot{\gamma}^1| + |\dot \gamma^2|  )(|\dot{\gamma}^1| - |\dot \gamma^2|  ) \, dt \right|\\
  & \leq \frac{\beta}{2\alpha^2} \left( \norm{\gamma^1}_{H^1}+\norm{\gamma^2}_{H^1} \right) \left( \int_0^1 |   \dot{\gamma}^1 - \dot \gamma^2|^2   \, dt \right)^{1/2} \\
 & \leq R \,\frac{\beta}{\alpha^2} \norm{\gamma^1 - \gamma^2 }_{H^1}\,.
  \end{aligned}
  \end{equation}
  Recall that $\gamma \mapsto D_\gamma W$ is bounded, with  $\norm{D_\gamma W}_{(H^1)^*} \leq \norm{w}_{1,1}$. Also the map $\gamma \mapsto 1/L(\gamma)$ is bounded by $1/\alpha$. Therefore by the Lipschitz estimates \eqref{gateaux:4} and \eqref{gateaux:est_12} we obtain 
  \begin{equation} \label{gateaux:est_13}
  \begin{aligned}
  	 \norm{  \frac{D_{\gamma^1}W}{L(\gamma^1)} - \frac{D_{\gamma^2}W}{L(\gamma^2)}  }_{(H^1)^*} 
  	 & \leq \norm{D_{\gamma^1}W}_{(H^1)^*}   \left| \frac{1}{L(\gamma^1)}  - \frac{1}{L(\gamma^2)}      \right| \\
  	 & \qquad \qquad \qquad \qquad + \frac{1}{L(\gamma^2)} \norm{D_{\gamma^1}W - D_{\gamma^2}W}_{(H^1)^*} \\
  	 & \leq \norm{w}_{1,1}  \left( \frac{R \beta}{\alpha^2}  + \frac{\sqrt{2}}{\alpha} \right)   \norm{\gamma^1 - \gamma^2 }_{H^1}\,.
  \end{aligned}	
  \end{equation}
 We now estimate the second term in \eqref{gateaux:est_11}. First note that, as a consequence of \eqref{gateaux:estimates} and of the mean value theorem, the map $\gamma \mapsto F(\gamma)$ is bounded by $\norm{w}_{1,1}/\alpha$ and has (global) Lipschitz constant bounded by $\norm{w}_{1,1} (1+\sqrt{\beta/(2\alpha)})/\alpha$. Moreover, the map $\gamma \mapsto G(\gamma):=D_\gamma L/L(\gamma)$ is bounded by $\sqrt{\beta/(2\alpha)}$ (see \eqref{gateaux:est_L}). It is easy to check that $G$ is continuously Fr\'echet differentiable. Employing the estimates $L \geq \alpha$ and \eqref{gateaux:est_L}, we also check that $\gamma \mapsto D_\gamma G$ is bounded uniformly by $3\beta/(2\alpha)$. By the mean value theorem we then conclude that $G$ is globally Lipschitz with constant controlled by $3\beta/(2\alpha)$. Arguing as in \eqref{gateaux:est_13}, we compute
   \begin{equation} \label{gateaux:est_14}
  \norm{ F(\gamma^1) \, \frac{D_{\gamma^1}L}{L(\gamma^1)} - F(\gamma^2) \, \frac{D_{\gamma^2}L}{L(\gamma^2)}  }_{(H^1)^*} \leq \frac{\norm{w}_{1,1}}{\alpha} \left( \sqrt{\frac{\beta}{2\alpha}} + \frac{2\beta}{\alpha} \right) \norm{\gamma^1-\gamma^2}_{H^1} \,.
 \end{equation}
The inequality at \eqref{gateaux:lip} follows from \eqref{gateaux:est_11}, \eqref{gateaux:est_13},  \eqref{gateaux:est_14}, and the proof is concluded.  
  \end{proof}

Finally, we show that stationary points of $F$ with non-zero energy are curves contained in $E \subset \Om$.

   \begin{prop} \label{cor:optimality}
  Assume \ref{ass:F1}-\ref{ass:F4},  \ref{ass:H1}-\ref{ass:H3}. Let $(t \mapsto \tilde{\rho}_t) \in \curves$, $f \in \Ltwo$ and $\alpha,\beta>0$. For a.e.~$t \in (0,1)$ define $w_t:=-K_t(K_t^*\tilde{\rho}_t-f_t) \in C^{1,1}(\olom)$ and $w_t(x):=0$ for all $x \in \R^d \smallsetminus \olom$. %
  Consider the corresponding functional $F$ defined at \eqref{def:operators FWL}. %
  If $\gamma^* \in H^1$ is a stationary point for $F$, that is, $D_{\gamma^*} F=0$, then $\gamma^*$ satisfies the following system is in the weak sense 
   \begin{equation}
      \label{eq:critical_points}
          \beta  F(\gamma)  \, \ddot{\gamma}(t)  =\nabla w_t(\gamma(t))   \quad  \text{ for all } \,  t \in (0,1) \,, \qquad
           \dot \gamma(0)  = \dot \gamma(1) = 0   \,.
    \end{equation}
    If in addition $F(\gamma^*) \neq 0$, we have $\gamma^*([0,1]) \subset E$, where $E \Subset \Omega$ is the closed convex set in \ref{ass:F4}.  
    \end{prop}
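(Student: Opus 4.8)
The plan is to first convert the stationarity condition $D_{\gamma^*}F=0$ into a weak Euler--Lagrange identity, and then to exploit Assumption \ref{ass:F4} together with the convexity of the distance-to-$E$ function to force $\gamma^*$ to stay inside $E$. For the first step, since $L(\gamma^*)\ge\alpha>0$, formulas \eqref{gateaux F}--\eqref{gateaux derivatives} show that $D_{\gamma^*}F(\eta)=0$ for all $\eta\in H^1$ is equivalent to
\[
\beta F(\gamma^*)\int_0^1 \dot\gamma^*(t)\cdot\dot\eta(t)\,dt = -\int_0^1 \nabla w_t(\gamma^*(t))\cdot\eta(t)\,dt \qquad \text{for all } \eta\in H^1 ,
\]
where the right-hand side is well defined by Lemma \ref{lem:C1}. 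Testing against $\eta\in C^\infty_c((0,1);\R^d)$ yields the distributional equation $\beta F(\gamma^*)\ddot\gamma^*=\nabla w_t(\gamma^*)$ on $(0,1)$, and then reinserting arbitrary $\eta$ and integrating by parts produces the natural boundary conditions $\dot\gamma^*(0)=\dot\gamma^*(1)=0$. This is precisely the weak form of \eqref{eq:critical_points}.

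For the inclusion, I would introduce the convex, $C^{1,1}$ function $g(x):=\tfrac12\dist(x,E)^2$, whose gradient $\nabla g(x)=x-P_E x$ is Lipschitz, with $P_E$ the metric projection onto the closed convex set $E$. Setting $\eta:=\nabla g\circ\gamma^*=\gamma^*-P_E\gamma^*$, the chain rule for Lipschitz compositions with $H^1$ maps gives $\eta\in H^1$, so $\eta$ is an admissible test function. The key observation is that the right-hand side of the identity above vanishes identically: by \ref{ass:F4} we have $\supp \nabla w_t\subset E$, hence at times with $\gamma^*(t)\notin E$ one has $\nabla w_t(\gamma^*(t))=0$, while at times with $\gamma^*(t)\in E$ one has $\eta(t)=0$. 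Since $F(\gamma^*)\neq0$ by hypothesis, this forces $\int_0^1 \dot\gamma^*\cdot\dot\eta\,dt=0$.

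Next I would extract pointwise information from monotonicity. Writing $\pi:=P_E\gamma^*$, the variational characterization of the projection onto a convex set yields, for all $s,t\in[0,1]$, both $(\eta(t)-\eta(s))\cdot(\gamma^*(t)-\gamma^*(s))\ge0$ (monotonicity of $\nabla g$) and $(\eta(t)-\eta(s))\cdot(\pi(t)-\pi(s))\ge0$. Dividing these by $(t-s)^2$ and passing to the limit $s\to t$ at points where $\gamma^*,\eta,\pi$ are differentiable gives $\dot\eta\cdot\dot\gamma^*\ge0$ and $\dot\eta\cdot\dot\pi\ge0$ a.e. Combined with $\int_0^1 \dot\gamma^*\cdot\dot\eta\,dt=0$, nonnegativity of the integrand forces $\dot\gamma^*\cdot\dot\eta=0$ a.e.; decomposing $\dot\gamma^*=\dot\pi+\dot\eta$ then gives $|\dot\eta|^2=-\dot\eta\cdot\dot\pi\le0$, so $\dot\eta\equiv0$ and $\eta$ equals a constant $c$. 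Finally $|\eta(t)|=\dist(\gamma^*(t),E)\equiv|c|$: if $c\neq0$ then $\gamma^*(t)\notin E$ for every $t$, so $w_t(\gamma^*(t))=0$ a.e., whence $W(\gamma^*)=0$ and $F(\gamma^*)=0$, contradicting the assumption $F(\gamma^*)\neq0$. Therefore $c=0$ and $\gamma^*([0,1])\subset E$.

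The main obstacle I anticipate is the rigorous passage from the \emph{integral} monotonicity inequalities to the \emph{pointwise a.e.} products $\dot\eta\cdot\dot\gamma^*\ge0$ and $\dot\eta\cdot\dot\pi\ge0$ along the merely $H^1$ curve $\gamma^*$; the difference-quotient argument sketched above, applied at points of simultaneous differentiability (a set of full measure since $\gamma^*,\eta,\pi\in H^1\subset\mathrm{AC}$), is the cleanest way to avoid invoking Alexandrov second-order differentiability of $g$. The remaining verifications (admissibility of $\eta$, measurability of $t\mapsto\nabla w_t(\gamma^*(t))$, and the Lipschitz chain rule) are routine given Lemma \ref{lem:C1}.
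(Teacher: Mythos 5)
Your derivation of the weak Euler--Lagrange identity from $D_{\gamma^*}F=0$ coincides with the paper's (both simply read off \eqref{gateaux F}--\eqref{gateaux derivatives} and use $L\geq\alpha>0$), but your proof of the inclusion $\gamma^*([0,1])\subset E$ is correct and takes a genuinely different route. The paper argues by contradiction on the relatively open set $A=\{t:\gamma^*(t)\notin E\}$: it decomposes $A$ into connected components, tests the weak equation against functions supported in each component (where $\nabla w_t(\gamma^*(t))=0$ by \ref{ass:F4}), deduces that $\gamma^*$ is affine on interior components and constant near the endpoints $t=0,1$, and then invokes convexity of $E$ to conclude that a segment with endpoints in $E$ cannot leave $E$. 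You instead use the single global test function $\eta=\gamma^*-P_E\circ\gamma^*$, observe that the source term $\int_0^1\nabla w_t(\gamma^*(t))\cdot\eta(t)\,dt$ vanishes identically (the two factors have disjoint ``supports'' in $t$), and then exploit the monotonicity inequalities $(\eta(t)-\eta(s))\cdot(\gamma^*(t)-\gamma^*(s))\ge 0$ and $(\eta(t)-\eta(s))\cdot(\pi(t)-\pi(s))\ge 0$ for the metric projection onto the convex set $E$, upgraded to pointwise a.e.\ statements by difference quotients, to force $\dot\eta\equiv 0$; the constancy of $\dist(\gamma^*(\cdot),E)$ together with $F(\gamma^*)\neq 0$ and $\supp w_t\subset E$ then gives the conclusion. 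Your argument avoids the case analysis over interior versus boundary components and treats them uniformly, at the price of invoking firm nonexpansiveness of $P_E$ and the a.e.\ differentiability chain for Lipschitz compositions with $H^1$ curves; the paper's argument is more elementary, needing only that $E$ contains segments between its points. Both use convexity of $E$ in an essential way, and both are complete modulo the routine verifications you flag (admissibility of $\eta$ in $H^1$ and measurability of $t\mapsto\nabla w_t(\gamma^*(t))$, the latter supplied by Lemma \ref{lem:C1}).
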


\begin{proof}
By Lemma \ref{lem:C1} we have that $w \in L^2([0,1];C^{1,1}(\R^d))$. Moreover Proposition \ref{prop:gateaux} ensures that $F$ is continuously Fr\'echet differentiable over $H^1$. If $\gamma^*$ is such that $D_{\gamma^*}F=0$, from \eqref{gateaux F}-\eqref{gateaux derivatives} and the inequality $L\geq \alpha>0$ we deduce the weak formulation of \eqref{eq:critical_points}, i.e.,
\begin{equation} \label{eq:critical_weak}
-\int_0^1 \nabla w_t(\gamma^*(t)) \cdot \eta (t) \, dt = \beta F(\gamma^*) \int_0^1 \dot \gamma^*(t) \cdot \dot \eta(t) \, dt\,,  \quad \text{ for all } \,\, \eta \in H^1\,,
\end{equation}
Suppose that $F(\gamma^*) \neq 0$ and set 
 $A:=\{ t\in [0,1] \, \colon \, \gamma^*(t) \notin E\}$. %
 Assume by contradiction that $A \neq \emptyset$. Note that  $A \neq [0,1]$, since $F(\gamma^*) \neq 0$ and \ref{ass:F4} holds. 
 Since $E$ is closed and $\gamma^*$ is continuous, then $A$ is relatively open in $[0,1]$.  Therefore $A=\cup_{i \in \N} I_i$, with $I_i$ pairwise disjoint, which are either of the form $(a_i,b_i)$ with $0<a_i<b_i<1$, or $[0,a_i)$, or  $(a_i,1]$, with $0<a_i<1$, or empty. Assume that there exists $i \in \N$ such that $I_i=(a_i,b_i)$ with $0<a_i<b_i<1$. Let $\f \in C^1_c(a_i,b_i)$ and extend it to zero to the whole $[0,1]$. Set $\eta:=e_j \f$, with $e_j$ the $j$-th coordinate vector in $\R^d$. Since $\supp \nabla w_t \subset E$ for a.e.~$t \in (0,1)$ (see \ref{ass:F4}), $\gamma^*(t) \notin E$ for $t \in (a_i,b_i)$,  and $F(\gamma^*) \neq 0$, testing \eqref{eq:critical_weak} against $\eta$ yields $\int_{a_i}^{b_i} \dot\gamma^*_j \dot\f \, dt =0$, where $\gamma^*_j$ is the $j$-th component of $\gamma^*$. %
Therefore $\gamma^*$ %
 is linear in $[a_i,b_i]$. Since by construction $\gamma^*(a_i), \gamma^*(b_i) \in E$, by convexity of $E$ we obtain $\gamma^*(t) \in E$ for all $t \in [a_i,b_i]$, which is a contradiction. Assume now that there exists $i \in \N$ such that    $I_i=[0,a_i)$ for some $0<a_i<1$. Let $\f \in L^2(0,a_i)$, extend it to zero in $[a_i,1]$, and set $\eta(t):=-e_j \int_{a_i}^t \f(s)\, ds$. %
 Testing \eqref{eq:critical_weak} against $\eta$, allows to conclude that $\gamma^*$ is constant in $[0,a_i]$, which is a contradiction since by construction $\gamma^*(a_i) \in E$. Similarly, the remaining case $I_i=(a_i,1]$ for some $0<a_i<1$ leads to a contradiction. Thus we conclude that $A=\emptyset$, finishing the proof. 
\end{proof}

\subsubsection{Gradient descent} \label{sec:grad_descent}
 
 In this section we prove Theorem \ref{thm:gradient_descent} on descent sequences for the functional $F$ at \eqref{def:operators FWL}. The proof relies on the following lemma.

\begin{lem} \label{lem:descent}
Assume \ref{ass:H1}-\ref{ass:H3}, \ref{ass:F1}-\ref{ass:F4}. Let $(t \mapsto \tilde{\rho}_t) \in \curves$, $f \in \Ltwo$, $\alpha,\beta>0$. For a.e.~$t \in (0,1)$ define $w_t:=-K_t(K_t^*\tilde{\rho}_t-f_t) \in C^{1,1}(\olom)$ and $w_t(x):=0$ for all $x \in \R^d \smallsetminus \olom$. Consider the corresponding functional $F$ defined at \eqref{def:operators FWL}. Then, for all $M<0$, there exists $R>0$ depending only on $M,\Om,w, \alpha,\beta$, such that
\begin{equation} \label{eq:coercive:11}
 \{ \gamma \in H^1 \, \colon \, F(\gamma)\leq M\} \subset 
\{ \gamma \in H^1 \, \colon \, \norm{\gamma}_{H^1} \leq R\}	\,.	
\end{equation}
Moreover, let $\{\gamma^n\}$ in ${H^1}$ be a sequence such that
\begin{equation} \label{eq:descent_assumption}
	F(\gamma^n) \to c  \,, \qquad \norm{D_{\gamma^n}F}_{(H^1)^*} \to 0  \,, \qquad \text{ as } \,\, n \to +\infty \,,
\end{equation}
 for some $c<0$. Then, up to subsequences, $\gamma^n \to \gamma^*$ strongly in $H^1$. Any such accumulation point $\gamma^*$ satisfies $F(\gamma^*)=c$ and is stationary for $F$, namely, $D_{\gamma^*}F=0$.  
\end{lem}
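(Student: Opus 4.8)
The goal is to prove Lemma \ref{lem:descent}, which has two parts: a coercivity-type sublevel set inclusion \eqref{eq:coercive:11}, and a Palais-Smale-type convergence statement for sequences satisfying \eqref{eq:descent_assumption}.

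\textbf{Plan for the sublevel set bound \eqref{eq:coercive:11}.} The plan is to exploit the estimates on $F$ already established in Proposition \ref{prop:gateaux}. Recall $F(\gamma)=W(\gamma)/L(\gamma)$ with $L(\gamma) = \tfrac{\beta}{2}\int_0^1|\dot\gamma|^2\,dt + \alpha$. First I would use the bound $|W(\gamma)| \leq \norm{w}_{1,1}\,\norm{\gamma}_\infty^{0}$-type control; more precisely, since $w \in L^2([0,1];C^{1,1}(\R^d))$, one has $|W(\gamma)| = |\int_0^1 w_t(\gamma(t))\,dt| \leq \int_0^1 \norm{w_t}_\infty\,dt \leq \norm{w}_{1,1}$, a bound independent of $\gamma$. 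Hence $F(\gamma)\leq M<0$ forces $L(\gamma) = W(\gamma)/F(\gamma) \leq \norm{w}_{1,1}/|M|$ (using $W(\gamma)\leq \norm{w}_{1,1}$ and $F(\gamma)<0$, so $W(\gamma)<0$ as well). From $L(\gamma)\leq \norm{w}_{1,1}/|M|$ one reads off a bound on $\int_0^1|\dot\gamma|^2\,dt$. Since any $\gamma\in H^1([0,1];\olom)$ takes values in the bounded set $\olom$, the $L^2$ norm of $\gamma$ itself is controlled by $\mathrm{diam}(\olom)$, and combining these gives a bound $\norm{\gamma}_{H^1}\leq R$ with $R$ depending only on $M, \Om, w, \alpha, \beta$, as claimed. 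The subtle point is that sublevel sets of $F$ are taken over $\gamma$ with values in $\olom$, so the zeroth-order control comes from the domain constraint, not from $F$.

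\textbf{Plan for the convergence statement.} Given a sequence $\{\gamma^n\}$ with $F(\gamma^n)\to c<0$ and $\norm{D_{\gamma^n}F}_{(H^1)^*}\to 0$, the first step is boundedness: since $F(\gamma^n)\to c<0$, eventually $F(\gamma^n)\leq c/2 < 0$, so by \eqref{eq:coercive:11} the sequence is bounded in $H^1$. By reflexivity of $H^1$ we extract a weakly convergent subsequence $\gamma^n\weak\gamma^*$ in $H^1$, which by compact embedding $H^1([0,1];\R^d)\hookrightarrow C([0,1];\R^d)$ converges uniformly, so $\gamma^*$ still takes values in $\olom$. The heart of the argument is upgrading weak to strong convergence. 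I would test the stationarity condition against $\eta = \gamma^n - \gamma^*$: from \eqref{gateaux F}--\eqref{gateaux derivatives}, writing out $D_{\gamma^n}F(\gamma^n-\gamma^*)$ isolates the leading term $\beta F(\gamma^n)\,L(\gamma^n)^{-1}\int_0^1 \dot\gamma^n\cdot(\dot\gamma^n-\dot\gamma^*)\,dt$, while $D_{\gamma^n}W(\gamma^n-\gamma^*)\to 0$ by uniform convergence and $w\in L^2([0,1];C^{1,1})$. Since $\norm{D_{\gamma^n}F}_{(H^1)^*}\to 0$ and $\gamma^n-\gamma^*$ is bounded in $H^1$, we get $\int_0^1\dot\gamma^n\cdot(\dot\gamma^n-\dot\gamma^*)\,dt\to 0$, where I use that $\beta F(\gamma^n)/L(\gamma^n)$ is bounded away from $0$ (because $F(\gamma^n)\to c<0$ and $L(\gamma^n)$ is bounded above and below). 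Combined with weak convergence $\dot\gamma^n\weak\dot\gamma^*$ in $L^2$, which gives $\int_0^1\dot\gamma^*\cdot(\dot\gamma^n-\dot\gamma^*)\,dt\to 0$, this yields $\int_0^1|\dot\gamma^n-\dot\gamma^*|^2\,dt\to 0$, hence strong $H^1$ convergence.

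\textbf{Conclusion and main obstacle.} Once $\gamma^n\to\gamma^*$ strongly, continuity of $F$ gives $F(\gamma^*)=c<0$, and continuity of $\gamma\mapsto D_\gamma F$ (established in Proposition \ref{prop:gateaux}, where the map is shown locally Lipschitz) gives $D_{\gamma^*}F = \lim_n D_{\gamma^n}F = 0$, so $\gamma^*$ is stationary. The main obstacle I anticipate is the strong-convergence upgrade: one must carefully verify that the coefficient $\beta F(\gamma^n)/L(\gamma^n)$ appearing in front of the Dirichlet-energy term stays uniformly bounded away from zero, so that the vanishing of $D_{\gamma^n}F(\gamma^n-\gamma^*)$ genuinely forces the quadratic gradient term to vanish. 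This is exactly where the hypothesis $c<0$ is essential: if $c=0$ the leading term could degenerate and the argument would break down, which is consistent with the statement excluding stationary points of zero energy.
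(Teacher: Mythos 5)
Your argument for the sublevel-set inclusion \eqref{eq:coercive:11} has a genuine gap, and it is exactly at the point you flag as "the subtle point". You claim the zeroth-order control on $\gamma$ comes from the domain constraint, because $\gamma$ takes values in $\olom$. But the set in \eqref{eq:coercive:11} ranges over all of $H^1=H^1([0,1];\R^d)$: the functional $F$ has been extended to the whole Hilbert space precisely by setting $w_t\equiv 0$ outside $\olom$, and curves in this space are not constrained to lie in $\olom$ (indeed the gradient-descent iterates in Theorem \ref{thm:gradient_descent}, to which this lemma is applied, have no reason to stay in $\olom$). Your bound $L(\gamma)\le \norm{w}_{1,1}/|M|$, hence the bound on $\int_0^1|\dot\gamma|^2\,dt$, is correct, but it controls only the seminorm: $L$ is translation-invariant, so it gives no control on $\norm{\gamma}_{L^2}$. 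The missing idea --- and this is where assumption \ref{ass:F4} enters --- is that $F(\gamma)\le M<0$ forces $W(\gamma)<0$, hence $w_{\hat t}(\gamma(\hat t))\neq 0$ for some $\hat t$, hence $\gamma(\hat t)\in\supp w_{\hat t}\subset E$ with $E\Subset\Om$ compact; then
$\sup_t|\gamma(t)|\le \bigl(\int_0^1|\dot\gamma|^2\,dt\bigr)^{1/2}+\max_{x\in E}|x|$, which combined with your Dirichlet bound yields $\norm{\gamma}_{H^1}\le R$. In short, the localization comes from the negativity of $F$ via the support of $w$, not from a domain constraint that is not there. (The same confusion resurfaces harmlessly in your second part, in the unneeded remark that "$\gamma^*$ still takes values in $\olom$".)

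Your Palais--Smale argument, by contrast, is correct and genuinely simpler than the paper's. The paper establishes strong convergence through a case distinction ($\dot\gamma\equiv 0$ versus $\dot\gamma\not\equiv 0$) and carefully chosen auxiliary test directions $\eta^n$ built from $\gamma^n$ and the weak limit; you instead test $D_{\gamma^n}F$ against $\gamma^n-\gamma^*$, use the uniform convergence and $w\in L^2([0,1];C^{1,1})$ to show $D_{\gamma^n}W(\gamma^n-\gamma^*)\to 0$, and deduce $\int_0^1\dot\gamma^n\cdot(\dot\gamma^n-\dot\gamma^*)\,dt\to 0$ because the coefficient $\beta F(\gamma^n)/L(\gamma^n)$ is bounded away from zero --- which, as you correctly observe, is exactly where $c<0$ is essential. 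Both routes hinge on this same non-degeneracy; yours avoids the case analysis and is the standard Palais--Smale computation. Once the first part is repaired as above, your proof goes through.
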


\begin{proof}
Assume that $F(\gamma)\leq M$ for some $M<0$. %
Since $w \in L^2 ([0,1];C^{1,1}(\R^d))$ by Lemma \ref{lem:C1}, %
\begin{equation} \label{eq:descent_compact1}
\int_0^1 |\dot \gamma(t)|^2 \, dt \leq - \frac{2}{\beta} \left(\,\frac{\norm{w}_{1,1}}{M}+ \alpha \,	\right)\,,
\end{equation}
where we also used that $|W|\leq \norm{w}_{1,1}$ and $L>0$. 
As $\supp w_t \subset E$ for a.e.~$t \in (0,1)$ by \ref{ass:F4}, and $L(\gamma)>0$, the condition $F(\gamma)<0$, together with the continuity of $\gamma$, implies the existence of some $\hat t \in [0,1]$ such that $\gamma(\hat t) \in E$ (otherwise we would have $F(\gamma)=0$). Hence we can estimate
\begin{equation} \label{eq:descent_compact2}
\begin{aligned}
\sup_{t \in [0,1]} |\gamma(t)|	& \leq \sup_{t \in [0,1]}  |\gamma(t)-\gamma(\hat t)| + |\gamma(\hat t)| \\
& \leq 
\int_0^1 |\dot \gamma(s)| \, ds + \max_{x \in E} |x| 
\leq \left(\int_0^1 |\dot \gamma(s)|^2\, ds \right)^{1/2} + \max_{x \in \olom} |x|\,.
\end{aligned}
\end{equation}
From \eqref{eq:descent_compact1}-\eqref{eq:descent_compact2} we immediately deduce \eqref{eq:coercive:11} for some $R>0$. 
Assume now that $\{\gamma^n\}$ in $H^1$ satisfies \eqref{eq:descent_assumption} for some $c<0$. We will prove that $\{\gamma^n\}$ has at least one accumulation point with respect to the strong convergence of $H^1$. As $F(\gamma^n) \to c$ with $c<0$, from \eqref{eq:coercive:11} we deduce that $\{\gamma^n\}$ is uniformly bounded in $H^1$. %
Hence, there exists $\gamma \in H^1$ such that $\gamma^n \weak \gamma$ weakly in $H^1$ and $\gamma^n \to \gamma$ uniformly in $[0,1]$, up to subsequences (not relabelled). We will now prove that $\gamma^n \to \gamma$ strongly in $H^1$. %
By the uniform convergence $\gamma^n \to \gamma$ and regularity of $w$, dominated convergence yields %
\begin{equation} \label{eq:descent_proof:0}
W(\gamma^n) \to W(\gamma) \,\, \text{ as } \,\, n \to +\infty \,.
\end{equation}
Assume that $\dot\gamma \not \equiv 0$ and define, for $n$ sufficiently large, 
\[
\eta^n := \frac12 \, \gamma^n + \frac{\alpha}{\beta \int_0^1 \dot{\gamma}^n \cdot \dot \gamma \, dt} \, \gamma \,, \qquad  \eta:=\frac12 \, \gamma + \frac{\alpha}{\beta \int_0^1 |\dot \gamma|^2 \, dt} \, \gamma\,.
\]
Notice that $\eta^n \weak \eta$ weakly in $H^1$.
In particular $\{\eta^n\}$ is bounded in $H^1$, so that
\begin{equation} \label{eq:descent_proof:2}
|D_{\gamma^n}F(\eta^n)| \leq \norm{D_{\gamma^n} F}_{(H^1)^*} \norm{\eta^n}_{H^1} \to 0 \,\, \text{ as } \,\, n \to +\infty \,,
\end{equation}
where we employed continuous differentiability of $F$ (Proposition \ref{prop:gateaux}) and \eqref{eq:descent_assumption}. Notice now that $\eta^n \to \eta$ strongly in $L^2([0,1];\R^d)$, by Sobolev embeddings. Recalling \eqref{gateaux derivatives} and using the uniform convergence $\gamma^n \to \gamma$, together with the regularity of $w$, by dominated convergence we get
\begin{equation} \label{eq:descent_proof:3}
D_{\gamma^n}W(\eta^n) \to D_{\gamma}W(\eta) \,\, \text{ as } \,\, n \to +\infty \,.
\end{equation}
Moreover by definition of $\eta^n$ and \eqref{gateaux derivatives} one can check that $D_{\gamma^n} L(\eta^n) = L(\gamma^n)$
for all $n \in \N$. Taking the latter into account and substituting $\gamma^n$ and $\eta^n$ into \eqref{gateaux F} yields
\begin{equation} \label{eq:descent_proof:4}
L(\gamma^n) D_{\gamma^n}F(\eta^n) = D_{\gamma^n}W(\eta^n) - W(\gamma^n)
\end{equation}
for all $n \in \N$. Recalling that $\{\gamma^n\}$ is bounded in $H^1$, we also infer that $\{L(\gamma^n)\}$ is bounded. Therefore we can invoke \eqref{eq:descent_proof:0}, \eqref{eq:descent_proof:2}, \eqref{eq:descent_proof:3} to pass to the limit in \eqref{eq:descent_proof:4} and infer
\begin{equation} \label{eq:descent_proof:5}
D_{\gamma}W(\eta) = W(\gamma) \,.
\end{equation}
Substituting the definition of $\eta$  into \eqref{gateaux derivatives} yields $D_{\gamma}W(\eta) = L(\gamma) D_{\gamma}W(\gamma)/ D_\gamma L (\gamma)$.
By definition of $F$, the previous identity, and \eqref{eq:descent_proof:5}, we get that $F(\gamma)= D_\gamma W(\gamma) / D_\gamma L(\gamma)$.
On the other hand, substituting $\gamma^n$ and $\gamma$ into \eqref{gateaux F}, and recalling that $D_{\gamma^n}F(\gamma) \to 0$ by \eqref{eq:descent_assumption}, and that $L(\gamma^n) \geq \alpha >0$, results in
\begin{equation} \label{eq:descent_proof:6}
\left[ D_{\gamma^n} W(\gamma) - F(\gamma^n) D_{\gamma^n}L(\gamma)\right] \to 0  \,\, \text{ as } \,\, n \to +\infty \,.  
\end{equation}
Concerning \eqref{eq:descent_proof:6}, first note that
$F(\gamma^n) \to c$ by assumption. Moreover, since $\gamma^n \weak \gamma$ weakly in $H^1$ and $w \in L^2([0,1];C^1(\R^d))$, by dominated convergence we see that $D_{\gamma^n}W(\gamma) \to D_{\gamma}W(\gamma)$ and $D_{\gamma^n}L(\gamma) \to D_{\gamma}L(\gamma)$. Thus from \eqref{eq:descent_proof:6} we deduce that $D_{\gamma}W(\gamma)=c \, D_{\gamma}L(\gamma)$. Recalling that $F(\gamma)= D_\gamma W(\gamma) / D_\gamma L(\gamma)$, we conclude $F(\gamma)=c$, so that $F(\gamma^n) \to F(\gamma)$ (recalling \eqref{eq:descent_assumption}). 
By the convergence $F(\gamma^n) \to F(\gamma)$, definition of $F$ and \eqref{eq:descent_proof:0}, we conclude that $L(\gamma^n) \to L(\gamma)$. By definition of $L$, the latter is equivalent to
$\int_0^1 |\dot \gamma^n|^2 \, dt \to \int_0^1 |\dot \gamma|^2 \, dt$
as $n \to +\infty$. 
Since $\gamma^n \weak \gamma$ weakly in $H^1$, %
we infer $\gamma^n \to \gamma$ strongly in $H^1$. Setting $\gamma^*:=\gamma$ concludes the convergence statement.
Assume now that $\dot \gamma \equiv 0$. As $\dot \gamma \equiv 0$, by \eqref{gateaux derivatives} we obtain $D_\gamma L=0$. %
As $\gamma^n \weak \gamma$ weakly in $H^1$, by dominated convergence we get $D_{\gamma^n}W(\eta) \to D_{\gamma}W(\eta)$ and $D_{\gamma^n}L(\eta) \to D_{\gamma}L(\eta)=0$. Hence, taking the limit as $n \to +\infty$ in \eqref{gateaux F} evaluated on $\gamma^n$ and $\eta \in H^1$, and recalling that $\{F(\gamma^n)\}$ is bounded, yields $L(\gamma^n) D_{\gamma^n}F(\eta) \to D_{\gamma}W(\eta)$. 
As $\{L(\gamma^n)\}$ is bounded,  by \eqref{eq:descent_assumption} we get $D_{\gamma}W=0$.
We now claim that
\begin{equation} \label{eq:descent_proof:12}
\int_0^1 | \dot \gamma^n|^2 \, dt \to 0  \,\, \text{ as } \,\, n \to +\infty \,. 	
\end{equation}
Assume by contradiction that \eqref{eq:descent_proof:12} does not hold. Then there exists a subsequence (not relabelled) such that $\int_0^1 | \dot \gamma^n|^2 \, dt \geq q >0$ for all $n \in \N$. Given that $\{\gamma^n\}$ is bounded in $H^1$, without loss of generality we can assume that $\int_0^1 | \dot \gamma^n|^2 \, dt \to q_0$ as $n \to +\infty$, for some $q_0>0$.  Define
\[
\eta^n := \frac12 \, \gamma^n + \frac{\alpha}{\beta \int_0^1 |\dot{\gamma}^n|^2  \, dt} \, \gamma^n \,, \qquad
\eta := \frac12 \, \gamma + \frac{\alpha}{\beta q_0}\, \gamma\,.
\]
Clearly $\eta^n \weak \eta$ weakly in $H^1$. Arguing as in the proof of \eqref{eq:descent_proof:5}, we conclude that $D_\gamma W(\eta)=W(\gamma)$. Recalling that $D_\gamma W=0$, %
we infer $W(\gamma)=0$. %
Now notice that $L(\gamma^n) \to \beta q_0/2 + \alpha >0$, because  $\int_0^1 | \dot \gamma^n|^2 \, dt \to q_0$. By \eqref{eq:descent_proof:0} and the fact that $W(\gamma)=0$, we then conclude that $F(\gamma^n) \to 0$, which contradicts  \eqref{eq:descent_assumption}. Thus \eqref{eq:descent_proof:12} holds. As $\gamma^n \weak \gamma$ weakly in $H^1$ and $\dot \gamma \equiv 0$, from \eqref{eq:descent_proof:12} we infer that $\gamma^n \to \gamma$ strongly in $H^1$. Setting $\gamma^*:=\gamma$ concludes the convergence statement. 
Finally, suppose that $\gamma^n \to \gamma^*$ strongly in $H^1$ (subsequentially). As $F$ is continuously Fr\'echet differentiable (Proposition \ref{prop:gateaux}), thanks to \eqref{eq:descent_assumption} we obtain that $F(\gamma^*)=c$ and $D_{\gamma^*}F=0$.  
\end{proof}

\begin{proof}[Proof of Theorem \ref{thm:gradient_descent}]
The functional $F$ is continuously Fr\'echet differentiable as a consequence of Proposition \ref{prop:gateaux}. Moreover recall that $DF$ is locally Lipschitz (Proposition \ref{prop:gateaux}), with local Lipschitz constant in a ball $\{ \gamma \in H^1 \, \colon \,\norm{\gamma}_{H^1} \leq R\}$ estimated by $C_1R+C_2$, for some constants $C_1,C_2>0$ depending only on $w,\alpha,\beta$. Assume now that $\{\gamma^n\}$ in $H^1$ is a descent sequence in the sense of \eqref{def:sec5_descent} and set $M:=F(\gamma^0)<0$. By \eqref{eq:coercive:11} in Lemma \ref{lem:descent}, we can find some $R>0$, depending only on $M,\Om,w,\alpha,\beta$, such that
\begin{equation} \label{eq:condition_stepsize:1}
\{ \gamma \in H^1 \, \colon \, F(\gamma) \leq F(\gamma^0) \}
\subset 	\{ \gamma \in H^1 \, \colon \,\norm{\gamma}_{H^1} \leq R\}\,.
\end{equation}
For such $R$, consider the corresponding local Lipschitz constant $C_1R+C_2$ for $DF$. It well-known that the Armijo-Goldstein or Backtracking-Armijo rules for the stepsize $\{\delta_n\}$ guarantee that
\begin{equation} \label{eq:condition_stepsize}
0<A< \delta_n < B < \frac{2}{C_1R+C_2} \,,
\end{equation}
for some $A,B>0$ and all $n \in \N$. It is also standard %
that \eqref{eq:condition_stepsize:1}-\eqref{eq:condition_stepsize} and regularity of $F$ imply
$\norm{D_{\gamma^n} F}_{(H^1)^*} \to 0$ and $F(\gamma^{n+1}) \leq F(\gamma^n)$ for all $n \in \N$.
Since $F(\gamma^0)<0$ and $|F| \leq \norm{w}_{1,1}/\alpha$ by \eqref{gateaux:estimates}, from the monotonicity of $\{F(\gamma^n)\}$ we infer that $F(\gamma^n) \to c$ for some $c<0$. Therefore $\{\gamma^n\}$ satisfies \eqref{eq:descent_assumption}, so that we can apply Lemma \ref{lem:descent} and infer that $\{\gamma^n\}$ is strongly precompact in $H^1$, and that any strong accumulation point $\gamma^*$ satisfies $F(\gamma^*)=c$ and $D_{\gamma^*}F=0$. Since $c<0$,  by Proposition \ref{cor:optimality} we also obtain that $\gamma^*([0,1]) \subset E$, concluding. 
\end{proof}

 \subsubsection{Test for zero minimum} \label{subsec:test}

  \begin{prop} \label{prop:test}
  Assume \ref{ass:H1}-\ref{ass:H3}, \ref{ass:F1}-\ref{ass:F4}. Let $(t \mapsto \tilde{\rho}_t) \in \curves$, $f \in \Ltwo$, $\alpha,\beta>0$. For a.e.~$t \in (0,1)$ define $w_t:=-K_t(K_t^*\tilde{\rho}_t-f_t) \in C^{1,1}(\olom)$ and $w_t(x):=0$ for all $x \in \R^d \smallsetminus \olom$. Consider the corresponding functional $F$ defined at \eqref{def:operators FWL}. Then $0$ is the minimum of  \eqref{eq:sec5_ins} 
  if and only if 
   \begin{equation} \label{test2}
  \int_0^1 \max_{x \in \olom} w_t(x) \, dt \leq 0	\,.
  \end{equation}
  \end{prop}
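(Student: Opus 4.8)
The plan is to reduce the statement to a supremum computation and to identify
\[
S:=\sup_{\gamma\in H^1([0,1];\olom)}\int_0^1 w_t(\gamma(t))\,dt
\]
with $\int_0^1 M(t)\,dt$, where $M(t):=\max_{x\in\olom}w_t(x)$. Recall from \eqref{def:operators FWL} that $F(\gamma)=W(\gamma)/L(\gamma)$ with $L(\gamma)\geq\alpha>0$, so $F(\gamma)\geq0$ if and only if $W(\gamma)=-\int_0^1 w_t(\gamma(t))\,dt\geq0$. Since the value of \eqref{eq:sec5_ins} is $\min\{0,\inf_\gamma F(\gamma)\}$, it equals $0$ precisely when $F(\gamma)\geq0$ for every $\gamma\in H^1([0,1];\olom)$, i.e. when $\int_0^1 w_t(\gamma(t))\,dt\leq0$ for all such $\gamma$, i.e. when $S\leq0$. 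Thus it suffices to prove $S=\int_0^1 M(t)\,dt$.

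By Lemma \ref{lem:C1}, $w\in L^2([0,1];C^{1,1}(\R^d))$; in particular $M$ is measurable with $|M(t)|\leq\|w_t\|_{C^{1,1}(\olom)}\in L^1(0,1)$, so the right-hand integral is well-defined. The inequality $S\leq\int_0^1 M\,dt$ is immediate, since $w_t(\gamma(t))\leq M(t)$ for a.e.\ $t$ and every $\gamma$. The core of the argument is therefore the reverse inequality $S\geq\int_0^1 M\,dt$, for which I would construct, for each $\e>0$, a curve $\gamma\in H^1([0,1];\olom)$ with $\int_0^1 w_t(\gamma(t))\,dt\geq\int_0^1 M\,dt-\e$.

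I would first discretise the maximiser. Fix a reference point $p_0\in\Omega\smallsetminus E$ (which exists since $E\Subset\Omega$), where $w_t(p_0)=0$ by \ref{ass:F4}, and choose a finite $\eta$-net $\{y_1,\dots,y_K\}\subset E$ of the convex set $E$; set $y_0:=p_0$. With $M_\eta(t):=\max_{0\leq k\leq K} w_t(y_k)$, the $C^{1,1}$-regularity of $w_t$ gives $0\leq M(t)-M_\eta(t)\leq\|\nabla w_t\|_\infty\,\eta$, so $\int_0^1(M-M_\eta)\,dt\to0$ as $\eta\to0$. The finitely-valued selection $z(t):=y_{k^*(t)}$, with $k^*(t)$ the first index attaining $M_\eta(t)$, is measurable and satisfies $w_t(z(t))=M_\eta(t)$. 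I would then turn $z$ into an admissible curve: by inner regularity of Lebesgue measure, each level set $\{z=y_k\}$ is approximated from inside by a finite union of disjoint closed intervals, leaving only finitely many ``gap'' intervals of arbitrarily small total measure. On the good intervals I set $\gamma\equiv y_k$, so $w_t(\gamma(t))=M_\eta(t)$ there; across each gap I connect the two prescribed endpoint values by a polygonal path lying in $\Omega$ (possible since $\Omega$ is a connected open set, while segments joining points of the convex set $E$ stay in $E\subset\olom$), traversed at constant parametrisation. As there are finitely many gaps, each of positive length and finite path-length, the resulting $\gamma$ is Lipschitz on each piece, hence $\gamma\in H^1([0,1];\olom)$. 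The contribution of the gaps to $\int_0^1 w_t(\gamma(t))\,dt$ is bounded in modulus by $\int_{\mathrm{gaps}}\|w_t\|_{C^{1,1}(\olom)}\,dt$, which tends to $0$ with the total gap measure by absolute continuity of the integral, while on the good set the integrand equals $M_\eta(t)$. Letting the gap measure and then $\eta$ tend to $0$ yields $\int_0^1 w_t(\gamma(t))\,dt\geq\int_0^1 M\,dt-\e$, proving $S\geq\int_0^1 M\,dt$, and hence $S=\int_0^1 M\,dt$.

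The main obstacle is precisely this last construction: producing a genuine $H^1$ curve — not merely a bounded measurable or a continuous one — that nearly realises the pointwise maximum, while respecting the constraint $\gamma([0,1])\subset\olom$ for a general (possibly non-convex) domain $\Omega$. Finiteness of the $H^1$-energy forces the use of only finitely many transition intervals of positive length, and the inclusion $\gamma([0,1])\subset\olom$ forces routing the transitions through $\Omega$ via its connectedness together with the convexity of the support set $E$ supplied by \ref{ass:F4}.
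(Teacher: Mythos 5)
Your reduction to the identity $\sup_{\gamma}\int_0^1 w_t(\gamma(t))\,dt=\int_0^1\max_{x\in\olom}w_t(x)\,dt$ is the right one, and your route to the nontrivial inequality is genuinely different from the paper's. The paper invokes a measurable-selection theorem for Carath\'eodory integrands to produce a measurable maximizer $\hat\gamma$ with $\hat\gamma(t)\in\argmax_{x\in E}w_t(x)$, approximates it a.e.\ by curves in $H^1([0,1];\olom)$ (possible since $E\Subset\Om$ is compact and convex, e.g.\ by mollification), and passes to the limit by dominated convergence; this trades your explicit construction for an off-the-shelf selection theorem and is considerably shorter. Your $\eta$-net discretization avoids the selection theorem entirely and makes the near-maximizing $H^1$ curves explicit, at the price of the splicing bookkeeping. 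Both arguments correctly isolate the only real difficulty --- realizing the pointwise maximum with finite $H^1$ energy while staying in $\olom$ --- and both exploit \ref{ass:F4} in the same way (convexity of $E$ for the transitions, $\supp w_t\subset E$ to reconcile $\max_E$ with $\max_{\olom}$).

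One step as written is false: a measurable subset of $[0,1]$ cannot in general be approximated \emph{from inside} by a finite union of closed intervals up to small measure (a fat Cantor set has positive measure but contains no interval at all), so inner regularity does not give you what you claim. The repair is standard and does not disturb your architecture: approximate each level set $\{z=y_k\}$ by a finite union of intervals $V_k$ in \emph{symmetric difference}, disjointify, set $\gamma\equiv y_k$ on the resulting intervals, and absorb the error on $\bigcup_k\bigl(V_k\triangle\{z=y_k\}\bigr)$ --- where $w_t(\gamma(t))$ need not equal $M_\eta(t)$ but is bounded by $\norm{w_t}_{C^{1,1}(\olom)}$ --- using absolute continuity of the integral, exactly as you already do for the gap intervals. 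You should also shrink the good intervals slightly so that consecutive intervals carrying different values $y_j\neq y_k$ are separated by gaps of positive length; otherwise the transition would be instantaneous and $\gamma$ would fail to be in $H^1$. With these repairs the proof is complete.
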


  \begin{proof}
  First note that $w$ is a Carath\'eodory map in $[0,1]\times \olom$ by the proof of Lemma \ref{lem:C1}, since $g_t:=-K_t^* \tilde{\rho}_t +f_t$ belongs to $L^2_H$ by Lemma \ref{lem:prop K}. Therefore $w$ is also Carath\'eodory in $[0,1]\times E$, because $\supp w_t \subset E$ with $E \Subset \Om$ closed and convex (see \ref{ass:F4}). Seeing that $E$ is compact, we can apply Theorem 18.19 in \cite{aliprantis} to obtain that the scalar map $t \mapsto \max_{x \in E} w_t(x)$ is measurable, and that there exists a measurable curve $\hat{\gamma} \colon [0,1] \to E$ such that 
  $\hat{\gamma}(t) \in \argmax_{x \in E} w_t(x)$  for all $t \in [0,1]$. By the condition $\supp w_t \subset E$, we infer $\max_{x \in E} w_t(x) = \max_{x \in \olom} w_t(x)$ for a.e.~$t \in (0,1)$, showing that $t \mapsto \max_{x \in \olom} w_t(x)$ is measurable. Thus the integral in \eqref{test2} is well defined. Moreover, by construction, 
  $w_t(\hat\gamma(t))= \max_{x \in \olom} w_t(x)$ for a.e.~$t \in (0,1)$.
Assume that $0$ is the minimum of  \eqref{eq:sec5_ins}. By the inequality $L(\gamma)\geq \alpha >0$, we infer
  \begin{equation} \label{test3}
  \int_0^1 w_t(\gamma(t)) \, dt \leq 0 
  \end{equation}
  for all $\gamma \in H^1([0,1];\olom)$. As $E \Subset \Om$, we can find a sequence $\{\gamma_n\}$ in $H^1([0,1];\olom)$ such that $\gamma_n \to \hat\gamma$ a.e.~in $(0,1)$ as $n \to + \infty$. Since $w_t \in C^{1,1}(\olom)$ for a.e.~$t$ fixed, we have $w_t(\gamma_n(t)) \to w_t(\hat\gamma(t))$  a.e.~in $(0,1)$. Moreover $|w_t(\gamma_n(t))| \leq \norm{w_t}_{C^{1,1}(\olom)}$. We can then substitute $\gamma^n$ in \eqref{test3} and apply dominated convergence to infer that $\hat \gamma$ satisfies \eqref{test3} as well. By maximality of $\hat{\gamma}$ we conclude \eqref{test2}. Conversely, assume that \eqref{test2} holds. For all $\gamma \in H^1([0,1];\olom)$ we get
  \[
  F(\gamma) = -\,\frac{\int_0^1 w_t(\gamma(t))\, dt}{L(\gamma)}  \geq 
  -\,\frac{\int_0^1 \max_{x \in \olom}w_t(x)\, dt}{L(\gamma)} \,.
  \]
  Since $L(\gamma) > 0$, we infer that $0$ is the minimum of  \eqref{eq:sec5_ins}.
 \end{proof}

  \subsection{Analysis for the sliding step} \label{sec:gradient_flow}

In this section we rigorously justify the sliding step discussed in Section \ref{subsec:sliding}, showing that, under Assumption \ref{def:additional K}, the target functional at \eqref{def:sliding} is differentiable. To fix notations,
  Let $N \in \N, N\geq 1$ and $c_j \in \R, c_j > 0$ be fixed. We denote by $(H^1_\Om)^N$ the space of points $\Gamma:=(\gamma_1,\ldots,\gamma_N)$ with $\gamma_j 
  \in H^1_\Om:=H^1([0,1];\olom)$. For $\Gamma \in (H^1_\Om)^N$ we define the measure
  \[
  \mu(\Gamma):= \sum_{j=1}^N c_j \mu_{\gamma_j} \in \M\,,
  \]
  where $\mu_{\gamma_j}:=(\rho_{\gamma_j},m_{\gamma_j}) \in \points$, according to \eqref{ext_meas}.
  Define the functional $\Phi \colon (H^1_\Om)^N \to \R$ by
  \begin{equation} \label{eq:grad_flow}
  \Phi (\Gamma) = \Phi (\gamma_1,\ldots,\gamma_N) := T_{\alpha,\beta,\boldsymbol{c}} (\mu(\Gamma))\,,
  \end{equation}
  where $T_{\alpha,\beta,\boldsymbol{c}}$ is defined in \eqref{def:sliding}, for some $f \in \Ltwo$ and $\alpha,\beta>0$ fixed. We also recall the notation $H^1:=H^1([0,1];\R^d)$.

  \begin{prop} \label{prop:grad_flow}
    Assume \ref{ass:F1}-\ref{ass:F3}, \ref{ass:H1}-\ref{ass:H3}. The functional $\Phi$ at \eqref{eq:grad_flow} is continuously Fr{\'e}chet differentiable at each $\Gamma \in (H^1_\Om)^N$ such that $\gamma_j([0,1]) \subset \Om$ for $j=1,\ldots,N$, with
    \begin{equation}
    \label{prop:grad_flow:gat}
    D_\Gamma \Phi(\Theta)= \sum_{j=1}^N c_j \, D_{\gamma_j} F (\eta_j)\,,
    \end{equation}
    for all $\Theta=(\eta_1 ,\ldots,\eta_N)$, $\eta_j \in H^1$. Here $F$ is as in 
    \eqref{def:operators FWL}, with respect to the dual variable
    \begin{equation} \label{prop:grad_flow:w}
  w_t := - K_t \left( \sum_{j=1}^N c_j a_{\gamma_j} K_t^*  \delta_{\gamma_j(t)}  - f_t  \right)\,.
  \end{equation}
  \end{prop}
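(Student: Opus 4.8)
The plan is to split $\Phi$ into its fidelity and regularization parts and reduce everything to the differentiability of a single $\Ltwo$-valued map. Writing $\mu(\Gamma)=(\rho(\Gamma),m(\Gamma))$, Lemma \ref{lem:additivity} gives $J_{\alpha,\beta}(\mu(\Gamma))=\sum_{j=1}^N c_j$ for every $\Gamma\in(H^1_\Om)^N$, so the regularization part of $\Phi$ is constant and contributes nothing to the derivative. For the fidelity part, $\rho(\Gamma)$ disintegrates as $\rho_t(\Gamma)=\sum_j c_j a_{\gamma_j}\delta_{\gamma_j(t)}$ (using the notation \eqref{ext_meas}), so that $t\mapsto\rho_t(\Gamma)\in\pcurves$ and, by Lemma \ref{lem:prop K} (applicable since \ref{ass:F1}-\ref{ass:F3} imply \ref{ass:K1}-\ref{ass:K3}), the map $g(\Gamma):=K^*\rho(\Gamma)-f$ belongs to $\Ltwo$. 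Then $\Phi(\Gamma)=\frac12\norm{g(\Gamma)}_{\Ltwo}^2+\sum_j c_j$. Since $h\mapsto\frac12\norm{h}_{\Ltwo}^2$ is smooth with derivative $\ps{h}{\cdot}_{\Ltwo}$, by the chain rule it suffices to show that $\Gamma\mapsto g(\Gamma)$ is continuously Fr\'echet differentiable on the open set $U\subset(H^1)^N$ of tuples whose curves satisfy $\gamma_j([0,1])\subset\Om$ (open because each such image is a compact subset of the open set $\Om$, hence robust to small $H^1$, and thus $C^0$, perturbations); once this is known, $D_\Gamma\Phi(\Theta)=\ps{g(\Gamma)}{D_\Gamma g(\Theta)}_{\Ltwo}$.

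The main step, and the principal obstacle, is the differentiability of $g$. As $g=\sum_j c_j a_{\gamma_j}\,(t\mapsto K_t^*\delta_{\gamma_j(t)})-f$, it is enough to differentiate each summand, which depends on $\gamma_j$ through the scalar $a_{\gamma_j}=1/L(\gamma_j)$ and through the $\Ltwo$-valued curve $t\mapsto K_t^*\delta_{\gamma_j(t)}$. The first dependence is $C^1$ by the quotient rule, since $L\geq\alpha>0$ is $C^1$ exactly as in Proposition \ref{prop:gateaux}, with $D_{\gamma_j}a_{\gamma_j}(\eta_j)=-a_{\gamma_j}^2 D_{\gamma_j}L(\eta_j)$. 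For the second, \ref{ass:F1} ensures the pre-adjoint maps $K_t:H_t\to C^{1,1}(\olom)$, so the identity $\ps{K_t^*\delta_x}{h}_{H_t}=(K_t h)(x)$ shows that $x\mapsto K_t^*\delta_x$ is continuously differentiable on $\Om$ into $H_t$, with derivative $v\mapsto\sum_i v_i\,K_t^*(\partial_{x_i}\delta_x)$ (where $\partial_{x_i}\delta_x\in C^{1,1}(\olom)^*$ is the distributional derivative), uniformly bounded in $t$ by \ref{ass:F2}. Combining this pointwise differentiability with strong measurability in $t$ and a domination argument — carried out as in Lemma \ref{lem:C1} and in the proof of the differentiability of $W$ in Proposition \ref{prop:gateaux} — yields that $\gamma_j\mapsto(t\mapsto K_t^*\delta_{\gamma_j(t)})$ is continuously Fr\'echet differentiable from $\{\gamma:\gamma([0,1])\subset\Om\}$ into $\Ltwo$, with derivative $\eta_j\mapsto(t\mapsto\sum_i\eta_j^{(i)}(t)\,K_t^*(\partial_{x_i}\delta_{\gamma_j(t)}))$. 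The hypothesis $\gamma_j([0,1])\subset\Om$ is precisely what makes the directional perturbations $\eta_j\in H^1$ admissible, since the image stays at positive distance from $\partial\Om$.

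With $D_\Gamma g$ in hand, I would finally expand $D_\Gamma\Phi(\Theta)=\ps{g}{D_\Gamma g(\Theta)}_{\Ltwo}$ term by term. Using the pre-adjoint identities $\ps{K_t^*\delta_x}{g_t}_{H_t}=(K_t g_t)(x)=-w_t(x)$ and $\ps{K_t^*(\partial_{x_i}\delta_x)}{g_t}_{H_t}=\partial_{x_i}(K_t g_t)(x)=-\partial_{x_i}w_t(x)$, with $w$ the dual variable at $\Gamma$ given by \eqref{prop:grad_flow:w}, the $\delta$-contribution integrates to $-a_{\gamma_j}\int_0^1\nabla w_t(\gamma_j(t))\cdot\eta_j(t)\,dt=a_{\gamma_j}D_{\gamma_j}W(\eta_j)$, while the $a_{\gamma_j}$-contribution integrates to $(D_{\gamma_j}a_{\gamma_j}(\eta_j))\,W(\gamma_j)=-a_{\gamma_j}^2 W(\gamma_j)D_{\gamma_j}L(\eta_j)$, where I use $\int_0^1 w_t(\gamma_j(t))\,dt=-W(\gamma_j)$ and the formulas \eqref{gateaux derivatives}. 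Summing over $j$ with weights $c_j$ and comparing with \eqref{gateaux F}, rewritten as $D_{\gamma_j}F(\eta_j)=a_{\gamma_j}D_{\gamma_j}W(\eta_j)-a_{\gamma_j}^2 W(\gamma_j)D_{\gamma_j}L(\eta_j)$, yields exactly \eqref{prop:grad_flow:gat}. Continuity of $\Gamma\mapsto D_\Gamma\Phi$ then follows automatically, as it is the composition of the continuous map $\Gamma\mapsto(g(\Gamma),D_\Gamma g)$ with the bounded bilinear pairing of $\Ltwo$.
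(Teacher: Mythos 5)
Your proposal is correct, but it reaches \eqref{prop:grad_flow:gat} by a genuinely different route than the paper. You factor $\Phi$ as $\frac12\norm{\cdot}^2_{\Ltwo}\circ g+\sum_j c_j$ with $g(\Gamma):=K^*\rho(\Gamma)-f$, and carry the differentiability at the level of the $\Ltwo$-valued map $\Gamma\mapsto g(\Gamma)$, using that $x\mapsto K_t^*\delta_x$ is differentiable into $H_t$ with derivative expressed through $K_t^*(\partial_{x_i}\delta_x)\in H_t$ (legitimate since \ref{ass:F1} defines $K_t^*$ on all of $C^{1,1}(\olom)^*$, with a remainder that is quadratic in $|x-y|$ uniformly in $t$ by \ref{ass:F2}); the chain rule then yields Fr\'echet differentiability directly, and your final bookkeeping with $\ps{K_t^*\delta_x}{g_t}_{H_t}=-w_t(x)$ and $D_{\gamma}a_{\gamma}(\eta)=-a_\gamma^2D_\gamma L(\eta)$ correctly reassembles $\sum_j c_j D_{\gamma_j}F(\eta_j)$ via \eqref{gateaux F}--\eqref{gateaux derivatives}. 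The paper instead stays scalar-valued throughout: it expands the difference quotient of $\Phi$ with the polarization identity \eqref{eq:bilinear2}, recognizes the linear term as $\frac1\e\sum_jc_j\big(F(\gamma_j+\e\eta_j)-F(\gamma_j)\big)$ (via \eqref{eq:diff_computation:11}, since pairing $\rho_\gamma$ against the fixed dual variable $w$ of \eqref{prop:grad_flow:w} is exactly $-F(\gamma)$), so that Proposition \ref{prop:gateaux} gives the limit for free, and then shows the quadratic remainder is $O(\e)$ by a mean value theorem estimate on $\gamma\mapsto a_\gamma\delta_{\gamma(t)}$ in $C^1(\olom)^*$ — an estimate whose technical content is essentially the Lipschitz version of the derivative you compute. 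What your route buys is a cleaner conceptual structure (everything reduces to one $C^1$ map into a Hilbert space, and Fr\'echet differentiability comes out immediately rather than via G\^ateaux-plus-continuity); what it costs is that you must independently establish strong measurability and $\Ltwo$-membership of $t\mapsto K_t^*(\partial_{x_i}\delta_{\gamma(t)})$, which is a $C^{1,1}(\olom)^*$-valued (not measure-valued) curve, so Lemma \ref{lem:prop K} does not apply off the shelf — you correctly flag that this is to be done by the Pettis-measurability argument of Lemma \ref{lem:C1}, and that step does go through, but it should be written out rather than only referenced.
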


  \begin{proof}
  Let $\Gamma \in (H^1_\Om)^N$	with $\gamma_j ([0,1]) \subset  \Om$. Let $\Theta=(\eta_1,\ldots,\eta_N)$ with $\eta_j \in H^1$ and $\e_0>0$ sufficiently small, so that $(\gamma_j + \e \eta_j) ([0,1]) \subset \Om$ for each $0<\e<\e_0$, $j=1,\ldots,N$. %
  By Lemma \ref{lem:additivity} 
  \begin{equation} \label{prop:grad_flow:1}
  J_{\alpha,\beta} (\mu(\Gamma)) = 
  J_{\alpha,\beta} (\mu(\Gamma+\e \Theta)) = \sum_{j=1}^N c_j\,,
  \end{equation}
  for all $0<\e<\e_0$.
  Define $w$ as in \eqref{prop:grad_flow:w} 
  and notice that $w \in L^2([0,1];C^{1,1}(\olom))$ by (the proof of) Lemma \ref{lem:C1}, since $f \in \Ltwo$ by assumption and $t \mapsto K_t^* \delta_{\gamma_j(t)}$ belongs to $\Ltwo$ by Lemma \ref{lem:prop K}, as $(t \mapsto \delta_{\gamma_j(t)}) \in \pcurves$.  By \eqref{prop:grad_flow:1}, linearity of $K_t^*$ and the identity \eqref{eq:bilinear2} with $\rho$ and $\hat{\rho}$ replaced by $\sum_{j=1}^N c_j \rho_{\gamma_j}$ and $\sum_{j=1}^N c_j \rho_{\gamma_j + \e \eta_j}$ respectively, one can compute that 
  \begin{align} 
  \frac{\Phi(\Gamma + \e \Theta)   - \Phi (\Gamma)}{\e}    %
   = -\frac{1}{\e} \sum_{j=1}^N c_j \, 
  \ps{  \rho_{\gamma_j + \e \eta_j}  - \rho_{\gamma_j}    }{w} + \frac{1}{2\e}
  \norm{    \sum_{j=1}^N c_j  K^*\left( \rho_{\gamma_j + \e \eta_j}  -  \rho_{\gamma_j }  \right)    }^2_{L^2_H} \,,\label{prop:grad_flow:2}
  \end{align}
  for all $0<\e<\e_0$. %
 By proceeding in the same way as in \eqref{eq:diff_computation:11}, we have
  \[
  \lim_{\e \to 0} -\frac{1}{\e} \sum_{j=1}^N c_j \, 
  \ps{  \rho_{\gamma_j + \e \eta_j}  - \rho_{\gamma_j}    }{w} =   \lim_{\e \to 0 } \, \sum_{j=1}^N c_j \, \frac{F( \gamma_j + \e \eta_j  ) - F( \gamma_j   )}{\e}
  =   \sum_{j=1}^N c_j \, D_{\gamma_j} F (\eta_j) \,,
  \]
  where we also used the definition of $F$ at \eqref{def:operators FWL} and Proposition \ref{prop:gateaux}.
  We claim that the second term in \eqref{prop:grad_flow:2} is infinitesimal as $\e \to 0$. By \ref{ass:F2} and Cauchy-Schwarz's inequality one has
\[
  \norm{   \sum_{j=1}^N c_j  K^*\left( \rho_{\gamma_j + \e \eta_j}  -  \rho_{\gamma_j }  \right)    }^2_{L^2_H}   
  \leq N C^2 \sum_{j=1}^N c_j^2 
  \int_0^1 \norm{a_{\gamma_j + \e \eta_j} \delta_{\gamma_j(t) + \e \eta_j(t)} -  a_{\gamma_j } \delta_{\gamma_j(t)}   }^2_{C^1(\olom)^*} \,dt \,
\]
where $C>0$ is the constant in \ref{ass:F2}.  %
Fix $t \in [0,1]$. Since $a_\gamma=1/L(\gamma)$ (see \eqref{def:operators FWL}, \eqref{ext_meas}),
  \begin{equation} \label{prop:grad_flow:5}
  \norm{a_{\gamma_j + \e \eta_j} \delta_{\gamma_j(t) + \e \eta_j(t)} -  a_{\gamma_j } \delta_{\gamma_j(t)}   }_{C^1(\olom)^*} =
  \sup_{\norm{\f}_{C^1(\olom)} \leq 1} \left| \frac{\f (\gamma_j(t) + \e \eta_j(t))}{L(\gamma_j + \e \eta_j)} -  \frac{\f (\gamma_j(t))}{L(\gamma_j )}   \right| \,.
  \end{equation}
  For a fixed $\f \in C^1(\olom)$, define the map $\Psi_t \colon H^1_\Om \to \R$ as 
  $\Psi_t(\gamma):= \f (\gamma(t))/L(\gamma)$.
  Since $\olom$ is bounded and $\f \in C^1(\olom)$, 
  one can check that $\gamma \in H^1_\Om \mapsto \f(\gamma(t)) \in\R$ is continuously Fr{\'e}chet differentiable at each $\gamma \in H^1_\Om$ with $\gamma([0,1])\subset \Om$, with derivative given by $\eta \mapsto \nabla \f (\gamma (t)) \cdot \eta(t)$. Moreover, $L$ is continuously differentiable by Proposition \ref{prop:gateaux}. Therefore $\Psi_t$ is continuously differentiable, given that $L \geq \alpha >0$. By differentiation rules and triangle inequality we also obtain the estimate
  \[
  \begin{aligned}
  \norm{D_\gamma \Psi_t}_{(H^1)^*} & = \sup_{\norm{\eta}_{H^1} \leq 1} \left|  \frac{\nabla \f (\gamma(t))\cdot \eta(t)}{L(\gamma)} - 
  \frac{D_\gamma L (\eta)}{L(\gamma)} \, \frac{\f (\gamma(t))}{L(\gamma)} \right|\\
    & \leq \frac{\tilde{C} \norm{\f}_{C^1(\olom)}}{ L(\gamma)} +    \norm{\frac{ D_\gamma L}{L(\gamma)} }_{(H^1)^*} \, \frac{\norm{\f}_{C^1(\olom)}}{L(\gamma)} \leq \frac{\norm{\f}_{C^1}}{\alpha} \left( \tilde{C} + \sqrt{\frac{\beta}{2 \alpha}} \right)
  \end{aligned}
  \]
  where $\tilde{C}>0$ is the Sobolev embedding constant for $H^1((0,1);\R^d) \hookrightarrow C([0,1];\R^d)$, and where in the last inequality we used that $L \geq \alpha$ and \eqref{gateaux:est_L}. 
  By the mean value theorem and \eqref{prop:grad_flow:5}
  \[
  \norm{a_{\gamma_j + \e \eta_j} \delta_{\gamma_j(t) + \e \eta_j(t)} -  a_{\gamma_j } \delta_{\gamma_j(t)}   }_{C^1(\olom)^*}  \leq  \e \, C \norm{\eta_j}_{H^1} \,,
  \]
  where $C$ depends only on $\alpha,\beta$ and on $\olom$.   %
  Putting together the above estimates shows that the second term in \eqref{prop:grad_flow:2} is infinitesimal as $\e \to 0$. This proves that the G\^{a}teaux derivative of $\Phi$ at $\Gamma$ in the direction $\Theta$ is given by \eqref{prop:grad_flow:gat}. From \eqref{prop:grad_flow:gat} and Proposition \ref{prop:gateaux} we also conclude that $\Gamma \mapsto D_{\Gamma} \Phi$ is continuous from $(H^1)^N$ into its dual, completing the proof.   
  \end{proof}

\subsection{Dynamic undersampled Fourier measurements} \label{subsec:Fourier_example}
In this section we detail a specific example of operators 
$K^*_t$ and measurement spaces $H_t$ satisfying the assumptions \ref{ass:H1}-\ref{ass:H3}, \ref{ass:K1}-\ref{ass:K3} in Section \ref{sec:assumptions}. Such example is contained in  \cite[Section 5]{bf}, and realizes, within our framework, a spatially undersampled Fourier transform with time-dependent mask. Let $\Om \subset \R^2$ be a bounded open domain, and $\sigma_t \in \M^+(\R^2)$ be a family of measures such that 
  	\begin{enumerate}[label=\textnormal{(M\arabic*)}]
    \item $\norm{\sigma_t}_{\M(\R^2)} \leq C$ for a.e.~$t \in (0,1)$,  \label{ass:M1}
    \item the map $t \mapsto \int_{\R^2} \f(s) \, d\sigma_t(s)$ is measurable for all $\f \in C_0(\R^2)$. \label{ass:M2}
    \end{enumerate}
The measurement spaces are defined as the real Hilbert space  $H_t:=L^2_{\sigma_t}(\R^2;\C)$, with scalar product given by 
$
\ps{f}{g}_{H_t}:={\rm Re} \left(\int_{\R^2} f(s)\overline{g(s)} \, d\sigma_t(s) \right)$, where ${\rm Re}$ denotes the real part in $\C$. For a measure $\rho \in \M(\olom)$ we denote its Fourier transform by
\begin{equation} \label{eq:fourier:1}
\mathfrak{F}\rho(s):=%
\int_{\R^2} \exp{(-2\pi i  x \cdot s)} \, d\rho(x) \,,
\end{equation}
for all $s \in \R^2$, where $\rho$ is extended to zero outside of $\olom$. Note that $\mathfrak{F}\rho \in C^\infty(\R^2;\C)$. We then define $K_t^* \colon \M(\olom) \to H_t$ by setting $K_t^*\rho:=\mathfrak{F}\rho$. In this way $K_t^*$ corresponds to  the Fourier transform sampled according to the measure $\sigma_t$. As a consequence of \cite[Lemma 5.4]{bf} we have that \ref{ass:H1}-\ref{ass:H3}, \ref{ass:K1}-\ref{ass:K3} hold whenever \ref{ass:M1}-\ref{ass:M2} are satisfied. It an easy check that in this case also \ref{ass:F1}-\ref{ass:F3} from Section \ref{sec:insstepheu} are satisfied. Moreover, define the operators $\tilde{K}_t^* \colon C^{1,1}(\olom)^* \to H_t$ as the dense extension of $\tilde{K}_t^* \rho:=\mathfrak{F}_E\rho$, where for $\rho \in \M(\olom)$ we set  
\begin{equation} \label{eq:fourier:2}
\mathfrak{F}_E \rho(s):=\int_{\R^2} \exp{(-2\pi i  x \cdot s)} \ \xi_E(x) \, d\rho(x) \,,
\end{equation}
for all $s \in \R^2$, 
and $\xi_E \colon \olom \to [0,1]$ is a cut-off with respect to a closed convex set $E \Subset \Om$ satisfying \eqref{def:cut-off}. Then, arguing as in Remark \ref{rem:assumptions}, we can show that $\tilde{K}_t^*$ satisfies \ref{ass:F1}-\ref{ass:F4}.

\subsubsection{Discrete sampling} \label{subsec:Fourier_discrete_sample}

As a particular case of the above setting, we sample the Fourier transform on a finite collection of time-dependent frequencies.
Specifically, fix $T \in \N$ and consider a time-grid $0 \leq t_0 < t_1 < \ldots <t_{T} \leq 1$. For each time $t_i$ we sample a given collection of frequencies $S_{i,1}, \ldots, S_{i,n_i} \in \R^2$, for some $n_i \in \N$. In order to incorporate this in the above setting, define a partition $A_0,\ldots,A_T$ of $[0,1]$, in a way that $t_i \in A_i$ for all $i=0,\ldots,T$. Then, define the scalar measure 
\[
\sigma_t :=  \sum_{i=0}^{T}  \sum_{k=1}^{n_i}\frac{1}{n_i}\, \delta_{S_{i,k}} \,\rchi_{A_i}(t)\,.
\] 
It is immediate to check that $\sigma_t$ satisfies \ref{ass:M1}-\ref{ass:M2}, so that the prescribed sampling falls within the above framework. 
 In this case the sampling space $H_t:=L^2_{\sigma_t}(\R^2;\C)$ is isomorphic to $\C^{n_i}$, whenever $t \in A_i$.  By defining the Fourier-type kernels $\psi_t \colon \R^2 \to \C^{n_i}$  
\[
\psi_t(x):= \left(\exp{(-2\pi i x \cdot S_{i,k}) }\right)_{k=1}^{n_i} \in \C^{n_i}\,,
\]
for any $t \in A_i$, we see that the operator  $K_t^* \colon \M(\olom) \to H_t$ defined by $K_t^*\rho:=\mathfrak{F}\rho$ in \eqref{eq:fourier:1} and its pre-adjoint $K_t \colon  H_t \to C(\olom)$ can be represented as
\begin{equation} \label{eq:app_Fourier2}
    K_{t}^*(\rho) = \int_{\R^2} \psi_t(x) \, d\rho(x)\,, \qquad
   K_{t}(h) = \left(\  x \mapsto  \left< \psi_{t}(x), h \right>_{H_{t}} \right)\,,
\end{equation}
for all $\rho \in \M(\olom)$, $h \in H_t$, where the first integral is computed component-wise. Similarly, the operator $K_t^*\rho:=\mathfrak{F}_E \rho$ in \eqref{eq:fourier:2} and its pre-adjoint $K_t$ are represented by \eqref{eq:app_Fourier2} with $\psi_t$ replaced by the cut-off kernel 
\begin{equation} \label{eq:app_Fourier3}
\psi_t(x):= \left(\exp{(-2\pi i x \cdot S_{i,k}) \ \xi_E(x) }\right)_{k=1}^{n_i} \in \C^{n_i}\,,
\end{equation}
for every $t \in A_i$.

  \end{document}